\newtheorem{theorem}{Theorem}[section]
\newtheorem{lemma}{Lemma}[section]
\newtheorem{assumption}{Assumption}[section]
\newtheorem{remark}{Remark}[section]
\newtheorem{definition}{Definition}[section]
\newcommand{\argmin}{\operatorname{argmin}}
\newcommand{\inner}[2]{\langle{#1},{#2}\rangle}
\newcommand{\T}{\operatorname{T}}
\newcommand{\E}{\mathbb{E}}
\newcommand{\dist}{\mathrm{dist}}
\newcommand{\Exp}{\mathtt{Exp}}
\newcommand{\D}{\operatorname{D}}
\newcommand{\retr}{\mathtt{Retr}}
\newcommand{\grad}{\mathtt{grad}}
\newcommand{\col}[1]{{\rm col}\left\{#1\right\}}
\newcommand{\f}{\mathbf{f}}
\newcommand{\bx}{\mathbf{x}}
\newcommand{\bz}{\mathbf{z}}
\newcommand{\proj}[1]{\mathcal{P}_{\mathcal{M}}\left(#1\right)}
\newcommand{\blambda}{\mathbf{\Lambda}}
\newcommand{\R}{\mathbb{R}}
\newcommand{\cD}{\mathcal{D}}
\newcommand{\cG}{\mathcal{G}}
\newcommand{\cM}{\mathcal{M}}
\newcommand{\cP}{\mathcal{P}}
\newcommand{\bG}{\mathbf{G}}
\title{Federated Learning on Riemannian Manifolds: \\ A Gradient-Free Projection-Based Approach}
\author{
{Hongye Wang} \thanks{School of Information Management and Engineering, Shanghai University of Finance and Economics. \texttt{ishongyewang@gmail.com}}
\and
{Zhaoye Pan} \thanks{School of Information Management and Engineering, Shanghai University of Finance and Economics. \texttt{iszhaoyepan@gmail.com}}
\and
{Chang He} \thanks{School of Information Management and Engineering, Shanghai University of Finance and Economics; Department of Industrial and System Engineering, University of Minnesota. \texttt{ischanghe@gmail.com}}
\and
{Jiaxiang Li} \thanks{Department of Electrical and Computer Engineering, University of Minnesota.  \texttt{li003755@umn.edu}}
\and 
{Bo Jiang} \thanks{School of Information Management and Engineering, Shanghai University of Finance and Economics. \texttt{isyebojiang@gmail.com}}
}
\begin{document}

\maketitle

\begin{abstract}
Federated learning (FL) has emerged as a powerful paradigm for collaborative model training across distributed clients while preserving data privacy. However, existing FL algorithms predominantly focus on unconstrained optimization problems with exact gradient information, limiting its applicability in scenarios where only noisy function evaluations are accessible or where model parameters are constrained. To address these challenges, we propose a novel zeroth-order projection-based algorithm on Riemannian manifolds for FL. By leveraging the projection operator, we introduce a computationally efficient zeroth-order Riemannian gradient estimator. Unlike existing estimators, ours requires only a simple Euclidean random perturbation, eliminating the need to sample random vectors in the tangent space, thus reducing computational cost. Theoretically, we first prove the approximation properties of the estimator and then establish the sublinear convergence of the proposed algorithm, matching the rate of its first-order counterpart. Numerically, we first assess the efficiency of our estimator using kernel principal component analysis. Furthermore, we apply the proposed algorithm to two real-world scenarios: zeroth-order attacks on deep neural networks and low-rank neural network training to validate the theoretical findings.
\end{abstract}

\section{Introduction}\label{section.intro}
Federated learning (FL) \cite{mcmahan2017communication, letaief2021edge} is a promising paradigm for large-scale machine learning that has attracted increasing attention in recent years. In scenarios where data and computational resources are distributed across diverse clients, such as phones, sensors, and other devices, FL facilitates collaborative model training \cite{kairouz2021advances} without requiring the exchange of local data. This framework effectively reduces communication overhead in distributed and parallel environments. Among the most well-known FL algorithms are FedAvg \cite{konevcny2016federated,mcmahan2017communication,stich2018local} and SCAFFOLD \cite{karimireddy2020scaffold}. Numerous variants, such as STEM \cite{khanduri2021stem}, MIME \cite{karimireddy2020mime}, and CE-LSGD \cite{patel2022towards}, have been proposed to improve convergence rates and stability.

Existing FL algorithms mentioned above mainly focus on unconstrained problems, and the derivative information is readily available or inexpensive to compute. In many applications, only noisy function evaluations are accessible, and the model parameters are subject to complicated constraints. One strategy for dealing with these issues is approaching them from the perspective of \textit{zeroth-order optimization on Riemannian manifolds} \cite{absil2008optimization,boumal2023introduction,li2023zeroth}. Mathematically, we study FL problems in a gradient-free manner, formulated as:
\begin{equation}\label{eq.main}
\begin{aligned}
    \min_{x \in \cM} \ &f(x) \triangleq \frac{1}{n}\sum_{i=1}^n f_i(x), \\
    &f_i(x) \triangleq \E_{\xi_i \sim \mathcal{D}_i} \left[F_i\left(x, \xi_i\right)\right],
\end{aligned}
\end{equation}
where $n$ is the number of clients, and $x$ denotes the model parameters constrained to a Riemannian submanifold $\mathcal{M}$. The local loss $f_i(\cdot)$, associated with client $i$, is smooth but non-convex and is defined as the expectation over $\xi_i \sim \mathcal{D}_i$. Furthermore, we only have access to noisy evaluations of the function value, rather than the exact Riemannian gradient of each local loss.

Currently, all FL algorithms on Riemannian manifolds \cite{li2022federated, huang2024federated, zhang2024nonconvex, huang2024riemannian, xiao2025riemannian} rely on exact derivative information. This makes them inapplicable in scenarios such as robotic control \cite{yuan2019bayesian}, zeroth-order attacks on deep neural networks \cite{tu2019autozoom, li2023zeroth}, or topological dimension reduction \cite{rabadan2019topological}. One possible strategy, however, is to directly integrate zeroth-order Riemannian optimization techniques into the FL framework. While such algorithms may be effective, they impose a severe computational burden. First, existing zeroth-order Riemannian estimators require sampling tangent random vectors or computing the coordinate basis of the tangent space \cite{li2023zeroth,li2023stochastic,he2024riemannian,wang2021greene}, which is non-trivial. Second, the server requires intricate geometric operators, such as exponential and inverse exponential maps and parallel transport, to calculate the tangent space consensus step \cite{li2022federated, huang2024federated}. For instance, in the case of the Stiefel manifold, each client first generates a Gaussian random vector in Euclidean space and then transforms it into a tangent space vector to construct the estimator. After collecting the zeroth-order estimators from all clients, the server solves a nonlinear matrix equation to compute the average. Both steps introduce significant overhead, making the existing approach inefficient for large-scale FL.

\subsection{Related Works}\label{appendix.literature review}
\paragraph{Zeroth-order Riemannian optimization.} Riemannian zeroth-order algorithms typically involve two key steps: constructing Riemannian zeroth-order estimators and integrating them with standard optimization algorithms, such as Riemannian gradient descent. Using noisy evaluations of the objective function, \cite{li2023stochastic} developed randomized zeroth-order estimators for the Riemannian gradient and Hessian, extending the Gaussian smoothing technique \cite{nesterov2017random, balasubramanian2022zeroth} to Riemannian manifolds. Subsequently, \cite{wang2021greene} proposed an alternative zeroth-order gradient estimator based on the Greene–Wu convolution over Riemannian manifolds, demonstrating superior approximation quality compared to the approach by \cite{li2023stochastic}. In terms of Riemannian zeroth-order algorithms, \cite{li2023stochastic} studied several zeroth-order algorithms for stochastic Riemannian optimization, providing the first complexity results. Later, they improved sample complexities by introducing zeroth-order Riemannian averaging stochastic approximation algorithms in \cite{li2023zeroth}. For deterministic objective function, \cite{he2024riemannian} proposed an accelerated zeroth-order Riemannian algorithm based on the coordinate-wise estimator. \cite{fan2021learning} proposed a Riemannian meta-optimization method that learns a gradient-free optimizer but lacks theoretical guarantees. Additionally, \cite{maass2022tracking} investigated the application of zeroth-order algorithms in Riemannian online learning.

\paragraph{Riemannian federated learning.} \cite{li2022federated} developed the first Riemannian FL framework and introduced Riemannian federated SVRG, where the server maps local models onto a tangent space, computes their average, and retracts the average back to the manifold. Subsequently, \cite{huang2024federated} proposed a generic private FL framework on Riemannian manifolds based on differential privacy, analyzing the privacy guarantees while establishing convergence properties. \cite{zhang2024nonconvex} leveraged the projection operator to design a Riemannian FL algorithm, improving computational efficiency. Very recently, \cite{huang2024riemannian} provided the first intrinsic analysis of Riemannian FL algorithm, extending beyond operations in Euclidean embedded submanifolds.

\subsection{Main Contribution} 
To address the computational burden, in this paper, we propose a zeroth-order \textit{projection-based} Riemannian algorithm for FL. Our main contributions are:

Firstly, we develop a new zeroth-order Riemannian gradient estimator based on the projection operator \cite{absil2012projection}. The estimator does not require sampling random vectors in the tangent space. Instead, it only needs Euclidean random vectors, similar to the zeroth-order estimators \cite{nesterov2017random, balasubramanian2022zeroth, ghadimi2013stochastic} in Euclidean space. As a result, our estimator is computationally more efficient than those in \cite{li2023zeroth, wang2021greene, he2024riemannian}. Theoretically, by leveraging the proximal smoothness of compact smooth submanifolds, we establish the approximation properties of the estimator, thereby ensuring its applicability in zeroth-order algorithms.

 Subsequently, we incorporate the zeroth-order estimator into the FL framework. To mitigate client drift, correction terms are introduced during the local updates on each client. At the server, we employ the projection operator for the tangent space consensus step, eliminating the need for exponential maps, inverse exponential maps, and parallel transport. By appropriately selecting the smoothing parameter, we establish sublinear convergence to an approximate first-order optimal solution under heterogeneous client data, matching the rate of its first-order counterpart \cite{zhang2024nonconvex}. Furthermore, our algorithm achieves the \textit{linear speedup} property, with the convergence rate improving proportionally to the number of participating clients.

 Finally, numerical experiments are conducted to validate the effectiveness of our proposed algorithm. We first assess the efficiency of the proposed zeroth-order Riemannian estimator using kPCA, demonstrating its computational advantages over the existing estimator. Next, we apply the algorithm to two practical applications: zeroth-order attacks on deep neural networks and low-rank neural network training, further supporting our theoretical findings.

\section{Preliminaries: optimization on manifolds}\label{section.pre}
In this section, elementary mathematical tools for Riemannian optimization are presented. For more details, we refer readers to \cite{absil2008optimization,boumal2023introduction}. Throughout this paper, we consider compact smooth Riemannian submanifolds embedded in the Euclidean space $\mathbb{R}^{p \times r}$, e.g., the Stiefel manifold and the Oblique manifold. The matrix $x \in \mathbb{R}^{p \times r}$ is denoted using lowercase letters. Each $x \in \cM$ is associated with a real vector space $\T_x\cM$, referred to as the tangent space at $x$. The normal space, denoted as $\mathrm{N}_x \mathcal{M}$, is orthogonal to the tangent space. For example, the Stiefel manifold $\mathcal{M}=\operatorname{St}(p, r)\triangleq\left\{x \in \mathbb{R}^{p \times r}: x^{\top} x=I_r\right\}$ is a smooth submanifold, and the tangent space at point $x$ is given by $\T_x \mathcal{M}=\left\{y \in \mathbb{R}^{p \times r}: x^{\top} y+y^{\top} x=0\right\}$. We use $\langle x, y\rangle=\operatorname{Tr}\left(x^{\top} y\right)$ to denote the Euclidean inner product of two matrices $x, y$, and the Riemannian metric is induced from the Euclidean inner product. We use $\|x\|$ to denote the Frobenius norm of $x$. For the local loss $f_i(\cdot)$ at client $i$, the Riemannian gradient $\grad f_i(x)$ at $x \in \cM$ is the unique vector in $\T_x\cM$ that satisfies $\D f_i(x)[s] = \inner{\grad f_i(x)}{s}$ for all $s \in \T_x\cM$, where $\D f_i(x)[s]$ is the directional derivative of $f_i$ at $x$ along $s$.

To optimize over Riemannian manifolds, a key ingredient is the retraction—a mapping enabling movement along the manifold from a point $x$ in the direction of a tangent vector $s \in \T_x\cM$. The exponential map $\Exp_x(s)$ is a typical example of a retraction map. For the Stiefel manifold $\operatorname{St}(p, r)$, the polar decomposition $\retr_x(s) = (x+s)(I_r + s^\top s)^{-1/2}$ serves as a retraction. In this paper, we utilize the projection operator $\cP_{\mathcal{X}}$, where $\mathcal{X} \subseteq \mathbb{R}^{p \times r}$, defined by
\begin{equation}\label{eq.projection operator}
    \cP_{\mathcal{X}}(s)\in \argmin_{x\in\mathcal{X}}\frac{1}{2}\|s-x\|^2, s \in \mathbb{R}^{p \times r}
\end{equation}
as a substitute for the retraction. It is well known that the projection exists when the set $\mathcal{X}$ is closed, but it may not be unique. When the projection set is a manifold (i.e., $\mathcal{X} = \cM$), the existence of the projection can be guaranteed locally without any further assumptions, as a manifold is always locally closed. Specifically, $\cP_{\cM}(x + s)$ can be regarded as a retraction when $x \in \cM$ and $s \in \T_x\cM$. The following concept quantitatively characterizes the uniqueness of the projection onto a manifold.
\begin{definition}[$\hat{\gamma}$-proximal smoothness of $\cM$]
    For any $\hat{\gamma}>0$, we define the $\hat{\gamma}$-tube around $\cM$ as $ U_{\cM}(\hat{\gamma})\triangleq\{x : \dist(x, \cM) < \hat{\gamma}\}$, where $\dist(x,\cM)\triangleq \min_{u\in \cM}\|u-x\|$ is the Eulidean distance between $x$ and $\cM$. We say that $\cM$ is $\hat{\gamma}$-proximally smooth if the projection operator $\cP_{\cM}(x)$ is unique whenever $x\in U_{\cM}(\hat{\gamma})$. For example, Stiefel manifold is $1$-proximally smooth.
\end{definition}

Any compact smooth submanifold $\mathcal{M}$ embedded in $\mathbb{R}^{p \times r}$ is a proximally smooth set \cite{davis2025stochastic}. Without loss of generality, we make the following assumption on the Riemannian manifold, which is widely used in \cite{zhang2024nonconvex,deng2023decentralized}.
\begin{assumption}\label{assum.proximal-smooth}
    The Riemannian submanifold $\cM$ is $2\gamma$-proximally smooth.
\end{assumption}
It immediately implies the following properties, and the proofs can be found in \cite{davis2025stochastic} and \cite{zhang2024nonconvex}.
\begin{lemma}\label{lemma.properties of proximal smoothness}
    Suppose Assumption \ref{assum.proximal-smooth} holds. Let $\overline{U}_{\cM}(\gamma)\triangleq\{x : \dist(x, \cM) \le \gamma\}$ be the closure of $U_{\cM}(\gamma)$, then the non-expansive property holds, i.e., 
    \begin{equation}\label{eq.lips=2}
        \|\cP_{\cM}(x) - \cP_{\cM}(y)\|\le 2\|x-y\|, \ \forall x,y \in \overline{U}_{\cM}(\gamma).
    \end{equation}
    For any $x\in \cM$ and $v\in \operatorname{N}_x\cM$, it holds that 
    \begin{equation}\label{eq.normal}
        \langle v, y-x\rangle\le \frac{\|v\|}{4\gamma}\|y-x\|^2, \ \forall y\in \cM.
    \end{equation}
    Furthermore, there exists a constant $M>0$ such that for any $x\in\cM$ and $u \in \mathbb{R}^{p \times r}$:
    \begin{equation}\label{eq.project Lips}
        \|\cP_{\cM}(x+u)-x\|\le M\|u\|.
    \end{equation}
\end{lemma}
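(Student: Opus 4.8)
The plan is to establish the three displayed properties in the order \eqref{eq.normal}, \eqref{eq.lips=2}, \eqref{eq.project Lips}, since the second is the geometric heart of the matter and the other two are consequences of it. The key observation underlying \eqref{eq.normal} is that $2\gamma$-proximal smoothness guarantees not merely that $\cP_{\cM}$ is single-valued on the tube, but that for $x\in\cM$ and a unit normal $v\in\mathrm{N}_x\cM$ the point $x+tv$ projects back onto $x$ for every $t\in[0,2\gamma)$; geometrically, an open ball of radius $2\gamma$ tangent to $\cM$ at $x$ from the normal direction meets $\cM$ only at $x$. I would take this ``ball-rolling'' characterization as the working form of the definition (the standard equivalence for proximally smooth sets; see \cite{davis2025stochastic}).

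Granting that $x=\cP_{\cM}(x+tv)$, the optimality of $x$ gives $\|x+tv-x\|^2\le\|x+tv-y\|^2$ for every $y\in\cM$. Expanding the right-hand side as $\|y-x\|^2-2t\inner{v}{y-x}+t^2\|v\|^2$ and cancelling the common term $t^2\|v\|^2$ yields $2t\inner{v}{y-x}\le\|y-x\|^2$, that is, $\inner{v}{y-x}\le\frac{1}{2t}\|y-x\|^2$. Letting $t\uparrow 2\gamma$ for unit $v$, and then rescaling to an arbitrary $v\in\mathrm{N}_x\cM$, produces exactly \eqref{eq.normal}.

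To obtain the non-expansive estimate \eqref{eq.lips=2}, fix $x,y\in\overline{U}_{\cM}(\gamma)$ and set $\bar x=\cP_{\cM}(x)$, $\bar y=\cP_{\cM}(y)$. The residuals $x-\bar x\in\mathrm{N}_{\bar x}\cM$ and $y-\bar y\in\mathrm{N}_{\bar y}\cM$ are normal vectors of length at most $\gamma$, since $\|x-\bar x\|=\dist(x,\cM)\le\gamma$ and likewise for $y$. Applying \eqref{eq.normal} once at $\bar x$ with test point $\bar y$ and once at $\bar y$ with test point $\bar x$, each right-hand side is bounded by $\tfrac14\|\bar x-\bar y\|^2$ because $\|x-\bar x\|/(4\gamma)\le\tfrac14$. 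Adding the two inequalities and regrouping the inner products gives $\|\bar x-\bar y\|^2+\inner{y-x}{\bar x-\bar y}\le\tfrac12\|\bar x-\bar y\|^2$, whence $\tfrac12\|\bar x-\bar y\|^2\le\inner{x-y}{\bar x-\bar y}\le\|x-y\|\,\|\bar x-\bar y\|$ by Cauchy--Schwarz; dividing through delivers $\|\bar x-\bar y\|\le 2\|x-y\|$.

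Finally, for \eqref{eq.project Lips} I would split on the size of $u$. If $\|u\|\le\gamma$, then $x+u\in\overline{U}_{\cM}(\gamma)$ and, since $\cP_{\cM}(x)=x$ for $x\in\cM$, estimate \eqref{eq.lips=2} gives $\|\cP_{\cM}(x+u)-x\|\le 2\|u\|$. If $\|u\|>\gamma$, compactness of $\cM$ bounds $\|\cP_{\cM}(x+u)-x\|$ by the diameter $D=\operatorname{diam}(\cM)$, so $\|\cP_{\cM}(x+u)-x\|\le D<\tfrac{D}{\gamma}\|u\|$; this bound is valid for any selection of the possibly set-valued projection, which is all that is needed. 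Taking $M=\max\{2,\,D/\gamma\}$ proves the claim. I expect the only genuinely delicate point to be the first step---justifying that $x+tv$ projects onto $x$ for all $t<2\gamma$, i.e.\ converting the bare single-valuedness in the definition into the quantitative normal inequality \eqref{eq.normal}; once that characterization of proximal smoothness is in hand, the remaining estimates \eqref{eq.lips=2} and \eqref{eq.project Lips} are routine.
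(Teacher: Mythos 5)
Your proof is correct, but it is worth pointing out that the paper never actually proves this lemma: it simply defers to \cite{davis2025stochastic} and \cite{zhang2024nonconvex}, so any honest derivation is ``a different route.'' What you do is make the logical skeleton explicit, and it is the right skeleton: you take the normal inequality \eqref{eq.normal} as the keystone, derive it from the ball-rolling form of proximal smoothness (that $\cP_{\cM}(x+tv)=x$ for unit $v\in\operatorname{N}_x\cM$ and all $t<2\gamma$) by expanding the optimality condition $\|tv\|^2\le\|x+tv-y\|^2$ and letting $t\uparrow 2\gamma$; you then get \eqref{eq.lips=2} from a two-point hypomonotonicity argument (apply \eqref{eq.normal} at $\cP_{\cM}(x)$ and at $\cP_{\cM}(y)$ with the residual normals, add, regroup, Cauchy--Schwarz), and \eqref{eq.project Lips} by splitting on $\|u\|\le\gamma$ versus $\|u\|>\gamma$ and using compactness of $\cM$ in the second case. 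All the algebra checks out; your hypomonotonicity step recovers exactly the constant $2$ (consistent with the general Lipschitz constant $R/(R-r)$ at $R=2\gamma$, $r=\gamma$), and your remark that the large-$\|u\|$ bound holds for any selection of the possibly set-valued projection is the correct reading of \eqref{eq.project Lips}. Two small accounting points. First, the fact that the residuals $x-\cP_{\cM}(x)$ are normal vectors, which you assert in the proof of \eqref{eq.lips=2}, is precisely the paper's Lemma \ref{lemma.projection normal property}, proved independently in the appendix, so that step needs no outside import. Second, the one genuinely non-self-contained step is the one you flag yourself: converting the paper's definition of proximal smoothness (single-valuedness of $\cP_{\cM}$ on $U_{\cM}(2\gamma)$) into the quantitative statement that every point $x+tv$ with $t<2\gamma$ on a normal ray projects back to its foot point. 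That equivalence is a real theorem (the Clarke--Stern--Wolenski characterization of proximally smooth sets, which also needs that for a smooth compact submanifold every normal vector is a proximal normal), not a routine manipulation; invoking it from the cited literature is legitimate, and in any case no weaker than what the paper does for the entire lemma.
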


We close this section by presenting some mild assumptions. As we are only accessible to the noisy evaluation of the function value $F(\cdot, \xi_i)$, we make the following assumption on the zeroth-order oracle.
\begin{assumption}\label{assum.unbiased}
    For any $x \in \mathbb{R}^{p \times r}$ and client $i$, the zeroth-order oracle outputs an estimator $F_i(x, \xi_i)$ of $f_i(x)$ such that $\mathbb{E}_{\xi_i}\left[ F_i\left(x, \xi_i\right)\right]= f_i(x)$, $\mathbb{E}_{\xi_i}\left[\grad F_i\left(x, \xi_i\right)\right]=\grad f_i(x)$ and $\mathbb{E}_{\xi_i}\left[\left\|\grad F_i\left(x, \xi_i\right)-\grad f_i(x)\right\|^2\right] \leq \sigma^2$.
\end{assumption}
It is worth noting that in the above assumption, we do not observe $\grad F_i(x, \xi_i)$ and we just assume that it is an unbiased estimator of gradient of $f_i$ and its variance is bounded. Furthermore, we make a similar smoothness assumption about the noisy function $F_i(x,\xi_i)$ as those in \cite{zhang2024nonconvex,deng2023decentralized,chen2021decentralized}.

\begin{assumption}\label{assum.Lipschitz}
    For each $F_i$, there exists a constant $\chi > 0$ such that $\max_{x \in \cM} \|\nabla F_i\left(x, \xi_i\right)\| \le \chi$ holds almost surely. Moreover, each $F_i$ is $l_i$-smooth on the convex hull of $\cM$ in the Euclidean sense, i.e., for any $x,y\in \mathrm{conv}(\cM)$, $ \|\nabla F_i(x, \xi) - \nabla F_i(y, \xi)\| \le l_i \|x - y\|$ holds almost surely.
\end{assumption}

\section{Zeroth-Order Riemannian Estimator}\label{section.estimator}
We begin by introducing our zeroth-order projection-based Riemannian gradient estimator. For clarity, we temporarily omit the subscript $i$ in \eqref{eq.main}, and focus on the smooth stochastic function $f = \mathbb{E}_{\xi \sim \mathcal{D}}\left[F(x,\xi)\right]$ in this section. Let us recall the existing zeroth-order Riemannian estimator at the point $x \in \cM$ in \cite{li2023stochastic}:
\begin{equation}\label{eq.existing gradient estimator}
      G^{\mathtt{R}}_{\mu}(x) = \frac{1}{m} \sum_{j=1}^m \frac{F(\retr_x(\mu u_j), \xi_{j}) - F(x, \xi_{j})}{\mu} u_{j},  
\end{equation}
where $\mu > 0$ is called the smoothing parameter, $\xi_j$ are independent copies of $\xi$, and $u_j$ are independent standard Gaussian random vectors supported on
$\T_x\cM$. The design of \eqref{eq.existing gradient estimator} is intuitive. A direct perturbation at the point $x$, i.e., $x + \mu u_j$, does not lie on the manifold due to the manifold's nonlinearity. To address this issue, \cite{li2023stochastic} introduces perturbations within the tangent space $\T_x\cM$, which is a vector space, and then employs a retraction to map the perturbations back to the manifold, i.e., $\retr_{x}(\mu u_j)$, thereby mimicking the perturbation process used in Euclidean space. However, generating random vectors in the tangent space is non-trivial. For submanifolds, this can be achieved by using an orthogonal projection matrix to project a standard Gaussian random vector onto the tangent space, since the tangent space is a subspace of the Euclidean space. For general Riemannian manifolds, \cite{utpala2023improved}\footnote{This paper does not focus on Riemannian zeroth-order optimization but instead addresses differentially private Riemannian optimization, which also necessitates generating random vectors in the tangent space.} proposes a sampling approach based on isometric transportation. 

Given the estimator \eqref{eq.existing gradient estimator}, generating tangent random vectors is unavoidable, as the retraction operates exclusively on tangent vectors. To eliminate the need for tangent random vectors, our approach employs the projection operator \eqref{eq.projection operator}. Notably, the domain of the projection operator is the entire space $\mathbb{R}^{p \times r}$, allowing direct perturbations at point $x$ to remain valid. Moreover, the properties of proximal smoothness (Lemma \ref{lemma.properties of proximal smoothness}) and Lipschitz continuity (Assumption \ref{assum.Lipschitz}) theoretically guarantee a good approximation, provided the perturbation radius is sufficiently small. By combining these insights, we introduce our estimator as follows:
\begin{equation}\label{eq.gradient estimator}
    G^{\mathtt{P}}_{\mu}(x) = \frac{pr}{m} \sum_{j=1}^m \frac{F(\cP_{\cM}(x+\mu u_j), \xi_{j}) - F(x, \xi_{j})}{\mu} u_{j}. 
\end{equation}
Here, $u_{j} \sim \mathbb{S}$ are independently sampled from the uniform distribution over the unit sphere in $\R^{p\times r}$ for all $j = 1,\ldots,m$. For the estimator \eqref{eq.gradient estimator}, the vectors $u_j$ no longer need to remain in the tangent space; simple Euclidean perturbations are sufficient. Next, we introduce several key inequalities that serve as key components in the analysis.
\begin{lemma}\label{lemma.lispchitz continuity over Riemannian gradient}
   Suppose the stochastic function is $l$-smooth on the convex hull of $\cM$ in the Euclidean sense and $\max_{x \in \cM} \|\nabla F\left(x, \xi\right)\| \le \chi$
    holds almost surely. Then, there exists a constant $L > 0$ such that for any $x,y\in\cM$, 
   \begin{equation}\label{eq.Lips inequality}   
       \begin{aligned}
         F(y,\xi) - F(x,\xi) 
           \le \inner{\grad F(x,\xi)}{y-x} + \frac{L}{2}\|x-y\|^2, 
       \end{aligned}
   \end{equation}
   and
   \begin{equation}\label{eq.grad Lips}
     \|\grad F(x,\xi)-\grad F(y,\xi)\| \le L\|x-y\|,  
   \end{equation}
   holds almost surely.
   % \begin{align*}
       
   % \end{align*}
   % \begin{subequations}
   %    \begin{align}
   %         &F(y,\xi) - F(x,\xi) - \inner{\grad F(x,\xi)}{y-x} \\
   %         \le \ \ &\frac{L}{2}\|x-y\|^2, \label{eq.Lips inequality} \\
   %         \|\grad F(x,\xi)-\grad F(y,\xi)\| \le L\|x-y\|. \label{eq.grad Lips}
   %    \end{align} 
   % \end{subequations}
\end{lemma}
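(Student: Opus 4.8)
The plan is to exploit the fact that, for a submanifold carrying the induced metric, the Riemannian gradient is nothing but the orthogonal projection of the Euclidean gradient onto the tangent space, $\grad F(x,\xi) = \cP_{\T_x\cM}\!\big(\nabla F(x,\xi)\big)$. Consequently the \emph{normal} residual $v_x \triangleq \nabla F(x,\xi) - \grad F(x,\xi) \in \operatorname{N}_x\cM$ satisfies $\|v_x\| \le \|\nabla F(x,\xi)\| \le \chi$ by nonexpansiveness of the orthogonal projector together with the hypothesis $\|\nabla F(x,\xi)\|\le\chi$. Both displayed inequalities then follow by marrying the Euclidean smoothness of $F(\cdot,\xi)$ on $\mathrm{conv}(\cM)$ with the proximal-smoothness bound \eqref{eq.normal}; the resulting constant $L$ is deterministic, so the inequalities hold almost surely.

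For \eqref{eq.Lips inequality}, since $x,y\in\cM\subseteq\mathrm{conv}(\cM)$ and $F(\cdot,\xi)$ is $l$-smooth there, the standard Euclidean descent lemma yields $F(y,\xi)-F(x,\xi)\le\inner{\nabla F(x,\xi)}{y-x}+\tfrac{l}{2}\|x-y\|^2$. I would then split $\inner{\nabla F(x,\xi)}{y-x}=\inner{\grad F(x,\xi)}{y-x}+\inner{v_x}{y-x}$ and control the second term by \eqref{eq.normal} applied with $v=v_x$, giving $\inner{v_x}{y-x}\le\tfrac{\chi}{4\gamma}\|x-y\|^2$. Collecting terms produces \eqref{eq.Lips inequality} with any $L\ge l+\tfrac{\chi}{2\gamma}$.

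For \eqref{eq.grad Lips}, I would use the projector decomposition
\[ \grad F(x,\xi)-\grad F(y,\xi)=\cP_{\T_x\cM}\!\big(\nabla F(x,\xi)-\nabla F(y,\xi)\big)+\big(\cP_{\T_x\cM}-\cP_{\T_y\cM}\big)\nabla F(y,\xi). \]
The first term is at most $l\|x-y\|$ because orthogonal projectors are nonexpansive and $F(\cdot,\xi)$ is $l$-smooth; the second term is at most $\chi$ times the operator-norm distance $\|\cP_{\T_x\cM}-\cP_{\T_y\cM}\|$, since $\|\nabla F(y,\xi)\|\le\chi$.

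The crux, and the step I expect to be the main obstacle, is establishing that the tangent-space projector varies Lipschitz-continuously along $\cM$: there exists $C>0$, depending only on the geometry of $\cM$ and quantifiable through the proximal-smoothness constant $\gamma$, with $\|\cP_{\T_x\cM}-\cP_{\T_y\cM}\|\le C\|x-y\|$ for all $x,y\in\cM$. This is the Lipschitz continuity of the Gauss map; on a compact $2\gamma$-proximally smooth submanifold it holds with $C$ governed by $1/\gamma$, and it can be extracted from the same normal-cone machinery underlying \eqref{eq.normal}, i.e.\ the fact that normal directions at nearby points differ by $O(\|x-y\|)$. Granting this, the second term is bounded by $C\chi\|x-y\|$, so \eqref{eq.grad Lips} holds with any $L\ge l+C\chi$. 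Taking $L=\max\{\, l+\tfrac{\chi}{2\gamma},\ l+C\chi \,\}$ validates both inequalities simultaneously and completes the argument.
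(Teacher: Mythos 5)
Your proposal is correct and follows essentially the same route as the paper: inequality \eqref{eq.Lips inequality} via the Euclidean descent lemma plus the normal-component bound \eqref{eq.normal}, and inequality \eqref{eq.grad Lips} via the decomposition into a same-projector/different-gradient term (bounded by nonexpansiveness and $l$-smoothness) and a different-projector/same-gradient term (bounded by $\chi$ times the Lipschitz modulus of $x\mapsto\cP_{\T_x\cM}$). The step you flag as the crux is handled identically in the paper, which simply asserts the Lipschitz continuity of the tangent-space projector on a compact smooth submanifold with constant $L_{\cP_{T\cM}}$, exactly as you do.
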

The intuition behind the above lemma is that the Euclidean gradient, $\nabla F(x,\xi)$, can be decomposed into the Riemannian gradient $\grad F(x,\xi)$, and a residual component lying in the normal space $\operatorname{N}_x\cM$, which can be bounded using inequality \eqref{eq.normal}. This allows us to derive Lipschitz-type inequalities for the Riemannian gradient based on Assumption \ref{assum.Lipschitz}.

Next, we explain why Euclidean perturbation is effective. Since $u_1, \ldots, u_m$ in \eqref{eq.gradient estimator} are independent and identically distributed random vectors, we analyze a single random vector $u$ here for simplicity. The small Euclidean perturbation $\mu u$ can be decomposed as follows:
$$
    \mu u = \underbrace{\cP_{\cM}(x+ \mu u) - x}_{\triangleq \rm (I)} + \underbrace{x + \mu u - \cP_{\cM}(x+ \mu u)}_{\triangleq \rm (II)}.
$$
Term $\rm (I)$ is exactly the gap between the original point $x$ and the projection of the perturbed point $x + \mu u$ onto the manifold, while term $\rm (II)$ can be shown to lie in the normal space at the projected point $\cP_{\cM}(x + \mu u)$. This holds whenever $\mu$ is sufficiently small (Lemma \ref{lemma.projection normal property}). A visual illustration is provided in Fig.  \ref{fig:projection}. Hence we may use the Lipschitz properties in Lemma \ref{lemma.lispchitz continuity over Riemannian gradient} to handle the Euclidean perturbation after decomposition. Based on these observations, the approximation capability of the gradient estimator \eqref{eq.gradient estimator} is analogous to that of \eqref{eq.existing gradient estimator}, as demonstrated in \cite{li2023stochastic}.
\begin{figure}[H]
    \centering
    \includegraphics[scale=0.35]{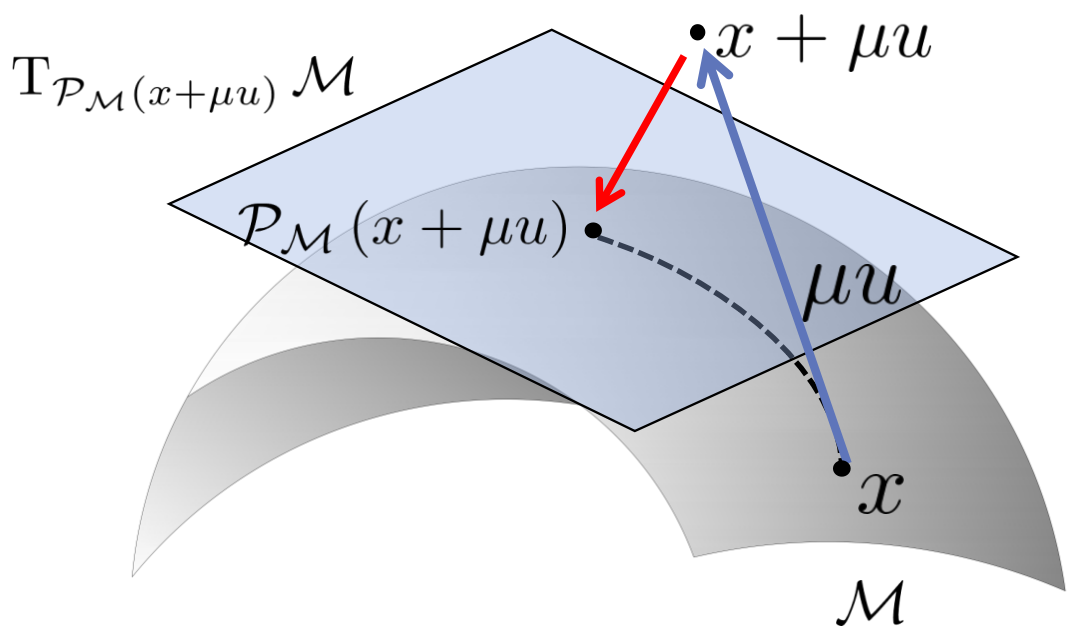}
    \caption{Euclidean perturbation}
    \label{fig:projection}
\end{figure}
\begin{remark}
Previous Riemannian zeroth-order methods \cite{li2023stochastic, li2023zeroth, maass2022tracking} rely on retractions to construct gradient estimators, which requires sampling random vectors in the tangent space and leads to additional overhead. By leveraging the fact that projections can operate on arbitrary ambient vectors, we avoid the sampling procedure in the tangent space. For example, on the Stiefel manifold $\mathrm{St}(n, p)$, the sampling procedure in the $\T_x\mathrm{St}(n, p)$ requires $\mathcal{O}(np^2)$ computational cost.
\end{remark}

We now establish some key properties of the zeroth-order Riemannian gradient estimator $G^{\mathtt{P}}_{\mu}(x)$ defined in \eqref{eq.gradient estimator}. These results will play a crucial role in the subsequent convergence analysis. We begin by showing that the estimator is uniformly bounded under mild assumptions.

\begin{lemma}\label{lemma.uniform boundness}
    Suppose Assumption \ref{assum.proximal-smooth}, \ref{assum.Lipschitz} holds. For the Riemannian gradient estimator \eqref{eq.gradient estimator} with $\mu \le \gamma$, there exists a constant $\chi_G = \mathcal{O}(pr)$ such that $\|G^{\mathtt{P}}_{\mu}(x)\| \le \chi_G$.
\end{lemma}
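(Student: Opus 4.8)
The plan is to bound the estimator summand-by-summand, reducing everything to a single function-value difference between two points that both lie on $\cM$. Since $u_j\sim\mathbb{S}$ has $\|u_j\|=1$, the triangle inequality applied to \eqref{eq.gradient estimator} gives
\[
\|G^{\mathtt{P}}_{\mu}(x)\| \le \frac{pr}{m}\sum_{j=1}^m \frac{\left|F(\cP_{\cM}(x+\mu u_j),\xi_j)-F(x,\xi_j)\right|}{\mu},
\]
so it suffices to bound a generic term $\mu^{-1}|F(\cP_{\cM}(x+\mu u),\xi)-F(x,\xi)|$ uniformly over unit vectors $u$ and over $\xi$; the prefactor $pr$ is then the only source of dimension dependence, which already explains the claimed order $\mathcal{O}(pr)$.

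First I would verify that the projected point is well defined and close to $x$. Because $x\in\cM$ and $\|\mu u\|=\mu\le\gamma$, we have $\dist(x+\mu u,\cM)\le\mu\le\gamma$, so $x+\mu u\in\overline{U}_{\cM}(\gamma)$ and Assumption \ref{assum.proximal-smooth} guarantees that the projection $y:=\cP_{\cM}(x+\mu u)$ is unique and lies on $\cM$. Inequality \eqref{eq.project Lips} of Lemma \ref{lemma.properties of proximal smoothness} then yields $\|y-x\|=\|\cP_{\cM}(x+\mu u)-x\|\le M\mu$.

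Next I would convert the function-value gap into a bound in terms of $\|y-x\|$. Since both $x$ and $y$ lie on $\cM$, applying the smoothness inequality \eqref{eq.Lips inequality} of Lemma \ref{lemma.lispchitz continuity over Riemannian gradient} in both directions and using $\|\grad F(\cdot,\xi)\|\le\|\nabla F(\cdot,\xi)\|\le\chi$ on $\cM$ (the Riemannian gradient is the tangential component of the Euclidean one, hence no larger) gives
\[
\left|F(y,\xi)-F(x,\xi)\right|\le \chi\|y-x\|+\tfrac{L}{2}\|y-x\|^2.
\]
Substituting $\|y-x\|\le M\mu$, dividing by $\mu$, and using $\mu\le\gamma$ produces the uniform bound $\mu^{-1}|F(y,\xi)-F(x,\xi)|\le M\chi+\tfrac{L}{2}M^2\gamma$. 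Plugging this into the first display gives $\|G^{\mathtt{P}}_{\mu}(x)\|\le pr\bigl(M\chi+\tfrac{L}{2}M^2\gamma\bigr)=:\chi_G$, which is $\mathcal{O}(pr)$ since $M,\chi,L,\gamma$ are absolute constants.

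The estimate is essentially routine once the right tools are lined up; the only point requiring care is the well-definedness step, namely certifying that $x+\mu u$ stays inside the $\gamma$-tube (which is exactly where $\mu\le\gamma$ enters) so that the potential non-uniqueness of the projection does not obstruct the argument, together with the observation that the Euclidean gradient bound $\chi$ transfers directly to the Riemannian gradient. An alternative to invoking Lemma \ref{lemma.lispchitz continuity over Riemannian gradient} would be to integrate $\nabla F$ along the segment $[x,y]\subset\mathrm{conv}(\cM)$ and control $\|\nabla F\|$ there via $l$-smoothness and the bound at $x$; this yields the same $\mathcal{O}(pr)$ order.
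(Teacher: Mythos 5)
Your proof is correct and follows essentially the same route as the paper's: both bound each summand by combining the Lipschitz-type inequality \eqref{eq.Lips inequality} with the projection property \eqref{eq.project Lips} and the gradient bound $\chi$, then pick up the factor $pr$ from the estimator's definition. Your additional check that $x+\mu u$ lies in the $\gamma$-tube (so the projection is well defined) is a detail the paper leaves implicit, but the substance of the argument is identical.
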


Next, we show that the expectation of $G^{\mathtt{P}}_{\mu}(x)$ is a good approximation of the true Riemannian gradient \( \grad f(x) \), with an error that depends on the smoothing parameter $\mu$. 

\begin{lemma}\label{lemma.first-order moment}
    Suppose Assumptions \ref{assum.proximal-smooth}, \ref{assum.unbiased}, \ref{assum.Lipschitz} holds. For the Riemannian gradient estimator \eqref{eq.gradient estimator} with $\mu \le \gamma$, there exists a constant $\chi_f = \mathcal{O}(pr)$ such that $\left\|\E \left[G^{\mathtt{P}}_{\mu}(x)\right] - \grad f(x) \right\| \le \chi_f\mu$. The expectation $\mathbb{E}$ is taken with respect to all random vectors $U=\left\{u_{1}, \ldots, u_{m}\right\}$ and $\Xi=\left\{\xi_{1}, \ldots, \xi_{m}\right\}$.
\end{lemma}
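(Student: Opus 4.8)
The plan is to integrate out the randomness in two stages and then reduce the whole statement to a single scalar Taylor-type estimate. First I would take the expectation over $\Xi=\{\xi_1,\dots,\xi_m\}$: by the unbiasedness in Assumption~\ref{assum.unbiased}, each $F(\cdot,\xi_j)$ collapses to $f(\cdot)$. Since $u_1,\dots,u_m$ are i.i.d.\ uniform on the unit sphere, every summand of \eqref{eq.gradient estimator} has the same expectation, so writing $y\triangleq\cP_{\cM}(x+\mu u)$ for a single uniform $u$,
\begin{equation*}
\E\!\left[G^{\mathtt{P}}_{\mu}(x)\right] = pr\,\E_u\!\left[\frac{f(y)-f(x)}{\mu}\,u\right].
\end{equation*}
Because $u$ is uniform on the sphere of $\R^{p\times r}\cong\R^{pr}$, we have $\E_u[uu^\top]=\tfrac{1}{pr}I$, hence $pr\,\E_u[\inner{\grad f(x)}{u}u]=\grad f(x)$ exactly. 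Subtracting this identity, the target reduces (using $\|u\|=1$) to controlling
\begin{equation*}
\left\|\E[G^{\mathtt{P}}_{\mu}(x)]-\grad f(x)\right\| \le pr\,\E_u\!\left[\frac{1}{\mu}\bigl|f(y)-f(x)-\mu\inner{\grad f(x)}{u}\bigr|\right].
\end{equation*}

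Next I would expand the scalar quantity inside the expectation. Taking expectations in \eqref{eq.Lips inequality} (valid for $f$ since it holds a.s.\ for $F$) applied to both $(x,y)$ and $(y,x)$, combined with the Riemannian-gradient Lipschitz bound \eqref{eq.grad Lips}, yields a two-sided descent estimate $f(y)-f(x)=\inner{\grad f(x)}{y-x}+R$ with $|R|\le C_1\|y-x\|^2$. Since $x\in\cM$ gives $\cP_{\cM}(x)=x$, the bound \eqref{eq.project Lips} provides $\|y-x\|\le M\mu$, so $|R|\le C_1M^2\mu^2$. Using the decomposition $\mu u=(y-x)+v$ with $v\triangleq x+\mu u-y$, I obtain $\mu\inner{\grad f(x)}{u}=\inner{\grad f(x)}{y-x}+\inner{\grad f(x)}{v}$, and therefore
\begin{equation*}
f(y)-f(x)-\mu\inner{\grad f(x)}{u}=R-\inner{\grad f(x)}{v}.
\end{equation*}
Thus everything comes down to showing that $|R|$ and the cross term $|\inner{\grad f(x)}{v}|$ are both $\mathcal{O}(\mu^2)$; dividing by $\mu$ and multiplying by $pr$ then gives the claimed bound with $\chi_f=\mathcal{O}(pr)$.

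The genuinely delicate piece, and the main obstacle, is the cross term $\inner{\grad f(x)}{v}$. By Lemma~\ref{lemma.projection normal property} the residual $v$ lies in the normal space $\mathrm{N}_y\cM$ at the projected point whenever $\mu\le\gamma$, and $\|v\|=\dist(x+\mu u,\cM)\le\mu$. The difficulty is that $v$ is normal at $y$ while $\grad f(x)$ is tangent at $x$, so the two are not orthogonal and the inner product need not vanish. I would resolve this by passing through the orthogonal projector $\mathcal{P}_{\T_x\cM}$ onto $\T_x\cM$: since $\grad f(x)\in\T_x\cM$ and $\mathcal{P}_{\T_y\cM}v=0$,
\begin{equation*}
\inner{\grad f(x)}{v}=\inner{\grad f(x)}{\mathcal{P}_{\T_x\cM}v}=\inner{\grad f(x)}{(\mathcal{P}_{\T_x\cM}-\mathcal{P}_{\T_y\cM})v}.
\end{equation*}
Invoking the Lipschitz continuity of the tangent projector along $\cM$, a standard consequence of proximal smoothness, namely $\|\mathcal{P}_{\T_x\cM}-\mathcal{P}_{\T_y\cM}\|_{\mathrm{op}}\le L_P\|x-y\|$, together with $\|\grad f(x)\|\le\chi$ from Assumption~\ref{assum.Lipschitz} and $\|y-x\|\le M\mu$, gives $|\inner{\grad f(x)}{v}|\le \chi L_P M\mu\,\|v\|\le \chi L_P M\mu^2$. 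This turns the $\mathcal{O}(\mu)$ mismatch between $\T_x\cM$ and $\T_y\cM$ into the needed $\mathcal{O}(\mu^2)$ cancellation, and I expect establishing (or carefully citing) this projector-Lipschitz bound to be the crux of the argument. Combining it with $|R|\le C_1M^2\mu^2$ yields $\chi_f=pr\,(C_1M^2+\chi L_P M)=\mathcal{O}(pr)$, as claimed.
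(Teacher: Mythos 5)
Your proposal is correct and follows essentially the same route as the paper's proof: the same sphere-sampling identity $pr\,\E_u[\inner{\grad f(x)}{u}u]=\grad f(x)$, the same decomposition of $\mu u$ into the tangential gap $y-x$ and the normal residual $v$, the same Taylor-type bound for the first piece via \eqref{eq.Lips inequality} and \eqref{eq.project Lips}, and the same $\mathcal{O}(\mu^2)$ cancellation of the cross term via Lemma \ref{lemma.projection normal property}. The only difference is cosmetic: for the cross term the paper writes $\inner{\grad F(x,\xi)}{v}=\inner{\grad F(x,\xi)-\grad F(y,\xi)}{v}$ and invokes the Riemannian-gradient Lipschitz bound \eqref{eq.grad Lips}, whereas you shift the tangent projectors and invoke their Lipschitz continuity along $\cM$ directly --- but since \eqref{eq.grad Lips} is itself derived in the paper from exactly that projector-Lipschitz property, the two arguments are equivalent.
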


Finally, we characterize the second-order moment of the estimator. The result below bounds the mean squared deviation of the estimator from the true gradient in terms of both the smoothing parameter $\mu$ and the sample size $m$.

\begin{lemma}\label{lemma.second-order moment}
    Suppose Assumptions \ref{assum.proximal-smooth}, \ref{assum.unbiased}, \ref{assum.Lipschitz} holds. For the Riemannian gradient estimator \eqref{eq.gradient estimator} with $\mu \le \gamma$, there exists constants $\chi_1 = \mathcal{O}\left(p^2r^2\right), \chi_2 = \mathcal{O}\left(p^2r^2 + \sigma^2\right)$ such that $ \E \left[\left\|G^{\mathtt{P}}_{\mu}(x) - \grad f(x)\right\|^2\right] \le \chi_1\mu^2 + \chi_2/m$. The expectation $\mathbb{E}$ is taken with respect to all random vectors $U=\left\{u_{1}, \ldots, u_{m}\right\}$ and $\Xi=\left\{\xi_{1}, \ldots, \xi_{m}\right\}$.
\end{lemma}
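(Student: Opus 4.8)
The plan is to exploit that $G^{\mathtt{P}}_\mu(x)$ is an average of $m$ i.i.d.\ terms and to separate the deterministic smoothing bias (already quantified in Lemma~\ref{lemma.first-order moment}) from the sampling variance. Write the per-sample estimator as $g_j \triangleq pr\,\mu^{-1}\bigl(F(\cP_{\cM}(x+\mu u_j),\xi_j)-F(x,\xi_j)\bigr)u_j$, so that $G^{\mathtt{P}}_\mu(x)=\frac1m\sum_{j=1}^m g_j$ with the $g_j$ i.i.d.\ and $\E[G^{\mathtt{P}}_\mu(x)]=\E[g_1]$. The bias--variance identity
\begin{equation*}
\E\bigl[\|G^{\mathtt{P}}_\mu(x)-\grad f(x)\|^2\bigr]=\bigl\|\E[g_1]-\grad f(x)\bigr\|^2+\frac1m\,\E\bigl[\|g_1-\E[g_1]\|^2\bigr]
\end{equation*}
would serve as the backbone of the proof: the first term yields the $\chi_1\mu^2$ contribution and the second the $\chi_2/m$ contribution.

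For the first term, Lemma~\ref{lemma.first-order moment} directly gives $\|\E[g_1]-\grad f(x)\|\le\chi_f\mu$, so its square is at most $\chi_f^2\mu^2=\mathcal{O}(p^2r^2)\mu^2$, fixing $\chi_1=\chi_f^2$. For the second term I would discard the centering via $\E\|g_1-\E[g_1]\|^2\le\E\|g_1\|^2$, reducing the whole task to a per-sample second-moment bound. Since $\|u_1\|=1$, this equals
\begin{equation*}
\E\|g_1\|^2=(pr)^2\,\E\!\left[\frac{\bigl(F(\cP_{\cM}(x+\mu u_1),\xi_1)-F(x,\xi_1)\bigr)^2}{\mu^2}\right],
\end{equation*}
so everything hinges on controlling the squared difference quotient of $F$ between $x$ and its projected perturbation.

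To bound that quotient I would apply the Lipschitz-type inequality~\eqref{eq.Lips inequality} (and its symmetric counterpart) to the two manifold points $x$ and $y\triangleq\cP_{\cM}(x+\mu u_1)$, both of which lie in $\mathrm{conv}(\cM)$, obtaining
\begin{equation*}
\bigl|F(y,\xi)-F(x,\xi)\bigr|\le\|\grad F(x,\xi)\|\,\|y-x\|+\tfrac{L}{2}\|y-x\|^2.
\end{equation*}
The projection estimate~\eqref{eq.project Lips} with $\mu\le\gamma$ then replaces $\|y-x\|=\|\cP_{\cM}(x+\mu u_1)-x\|$ by $M\mu$. Squaring with $(a+b)^2\le 2a^2+2b^2$, dividing by $\mu^2$, and averaging over $\xi_1$ leaves a leading term proportional to $\E_{\xi}\|\grad F(x,\xi)\|^2$ plus a remainder of order $\mu^2$ that is absorbed using $\mu\le\gamma$. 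Finally, Assumptions~\ref{assum.unbiased} and~\ref{assum.Lipschitz} give $\E_\xi\|\grad F(x,\xi)\|^2\le\|\grad f(x)\|^2+\sigma^2\le\chi^2+\sigma^2$, since the Riemannian gradient is the tangential component of the Euclidean gradient and $\grad f(x)=\E_\xi[\grad F(x,\xi)]$. Collecting constants yields $\E\|g_1\|^2=\mathcal{O}(p^2r^2+\sigma^2)$ and hence the claimed $\chi_2$.

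The step I expect to be the main obstacle is this per-sample second moment, specifically showing that the difference quotient splits cleanly into a first-order piece governed by the \emph{Riemannian} gradient (which carries the $\sigma^2$ and $\chi^2$ dependence) and a genuinely second-order, $\mathcal{O}(\mu^2)$ remainder. This relies on the normal-space decomposition of $\mu u_1$ sketched before the statement---that $x+\mu u_1-\cP_{\cM}(x+\mu u_1)$ lies in the normal space at $\cP_{\cM}(x+\mu u_1)$ for small $\mu$---so that the ambient perturbation behaves like a tangent displacement to leading order and the Euclidean-sense smoothness of Assumption~\ref{assum.Lipschitz} transfers into the Riemannian Lipschitz bounds of Lemma~\ref{lemma.lispchitz continuity over Riemannian gradient}. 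It is precisely here that proximal smoothness (Assumption~\ref{assum.proximal-smooth}), through~\eqref{eq.project Lips} and the requirement $\mu\le\gamma$, is needed to guarantee both $\|y-x\|\le M\mu$ and that the deviation from a pure tangent step contributes only at order $\mu^2$ rather than $\mu$.
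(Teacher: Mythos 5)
Your proof is correct, but it takes a genuinely different route from the paper's. The paper does not use the exact bias--variance identity: it first applies Young's inequality to split $\E\|G^{\mathtt{P}}_{\mu}(x)-\grad f(x)\|^2$ into twice the variance of the mini-batch Riemannian gradient $\frac{1}{m}\sum_{j}\grad F(x,\xi_j)$ (which produces a clean additive $2\sigma^2/m$ term) plus twice the deviation of $G^{\mathtt{P}}_{\mu}(x)$ from that mini-batch gradient; it then expands the latter square over the $m$ samples, bounding the diagonal terms by a per-sample second-moment estimate (essentially your computation, via \eqref{eq.Lips inequality} and \eqref{eq.project Lips}) and the off-diagonal cross terms by re-running the first-order-moment argument to get $\|\E_{u_j}[g_j]\|=\mathcal{O}(pr\mu)$, which is what generates the $\mathcal{O}(p^2r^2)\mu^2$ contribution there. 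Your decomposition is cleaner on two counts: the cross terms vanish identically because you center at $\E[g_1]$ and the $g_j$ are i.i.d., and the smoothing bias enters exactly once, squared, through a black-box application of Lemma \ref{lemma.first-order moment} rather than through a repeated ``similar argument''; your constants are also slightly tighter since you avoid the factor of $2$ on the bias part.

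One point of slack worth fixing: in the variance step you invoke $\E_\xi\|\grad F(x,\xi)\|^2\le\|\grad f(x)\|^2+\sigma^2\le\chi^2+\sigma^2$, which after multiplication by $(pr)^2$ gives a contribution of order $p^2r^2(\chi^2+\sigma^2)$. This couples the dimension and the noise multiplicatively, so it is not literally of the stated form $\mathcal{O}(p^2r^2+\sigma^2)$ if both $pr$ and $\sigma$ are allowed to grow. The repair is immediate with an ingredient you already cite: Assumption \ref{assum.Lipschitz} gives $\|\grad F(x,\xi)\|\le\|\nabla F(x,\xi)\|\le\chi$ almost surely (the tangent-space projection is non-expansive), hence $\E_\xi\|\grad F(x,\xi)\|^2\le\chi^2$ and your $\chi_2$ becomes $\mathcal{O}(p^2r^2\chi^2)=\mathcal{O}(p^2r^2)$, which satisfies the lemma. (The explicit additive $\sigma^2$ in the paper's $\chi_2$ is an artifact of its particular decomposition, not a necessity.)
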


\section{Algorithm Design and Analysis}\label{section.algorithm}
In this section, we integrate the proposed estimator into the FL framework on Riemannian manifolds. Our zeroth-order projection-based Riemannian FL is inspired by recent FL algorithms proposed in \cite{zhang2024composite,zhang2024nonconvex}. We use $k$ as the index for communication rounds and $t$ for local updates, both as superscripts. The subscript $i$ denotes the $i$-th client, and $j$ denotes the $j$-th stochastic sample used to construct the zeroth-order estimator.

\subsection{Algorithm design}
At each communication round $k$, each client $i$ updates two local variables, $\hat{z}_{i}^{k,t}$ and $z_{i}^{k,t}$. Since the data distributions across clients typically differ in practice, $\hat{z}_{i}^{k,t}$ not only aggregates the zeroth-order Riemannian estimators but also incorporates correction terms to mitigate client drift. Additionally, $z_{i, t}^r=\mathcal{P}_{\mathcal{M}}(\hat{z}_{i, t}^r)$ ensures that the estimator is calculated at points on the manifold $\mathcal{M}$. After $\tau$ local updates, the server receives all $\hat{z}_{i}^{k,\tau}$ and computes the average to construct the global model $x^{k+1}$, which is then broadcast to each client $i$. Subsequently, each client $i$ uses $x^{k+1}$ to update the correction term $c_i^{r+1}$ locally. Since the averaging step is computed through the projection operator, the inverse exponential mapping and parallel transport are no longer required, resulting in a lower computational cost compared with \cite{li2022federated}. Notice that $x^k$ does not necessarily remain on the manifold, and we only need to project the final point $x^{K+1}$ onto $\cM$ after $K$ communication rounds. By putting all things together, the detailed implementation of our algorithm is presented in Algorithm \ref{alg.federated}.

\subsection{Algorithm analysis} 
In zeroth-order optimization within Euclidean space, the analysis typically uses a smoothing function as a surrogate, since the estimator serves as an unbiased estimator of the gradient of this surrogate function (see \cite{nesterov2017random, ghadimi2013stochastic}). However, such a smoothing function no longer exists in the Riemannian setting due to the incorporation of the projection operator for constructing the zeroth-order Riemannian estimator. Consequently, we analyze the original function directly, focusing on the optimality gap $f(\cP_{\cM}(x^k)) - f^*$, where $f^*$ denotes the optimal value of problem \eqref{eq.main}. Since the estimator $G^{\mathtt{P}}_{\mu}(x)$ does not necessarily lie in the tangent space $\operatorname{T}_x \mathcal{M}$, the Lipschitz-type inequality \eqref{eq.Lips inequality} cannot be directly applied. Instead, the descent property for a Euclidean update direction is established in Lemma \ref{lemma.special}.
\begin{algorithm}[h]
 \caption{Zeroth-order Projection-based  Riemannian Federated Learning}
 \begin{algorithmic}[1]
            \State{\textbf{Input:}} Total communication rouds $K$, local update steps $\tau$, step sizes $\eta$, $\eta_g$, $\tilde{\eta} = \eta \eta_g\tau$, initial global model $x^1$, and initial correction terms $c_i^1 = 0$ for all $i = 1, \ldots, n$
            
  \For{$k=1,2,\cdots,K$}
            \State\textbf{Client $i = 1,\cdots, n$ in parallel:}
            \State Set $\hat{z}_{i}^{k,0} = \cP_{\cM}(x^k)$ and $z_{i}^{k,0} = \cP_{\cM}(x^k)$
            \For{$t = 0,1,\cdots,\tau-1$}
                \State Sample $\xi_{i,j}^{k,t} \sim \mathcal{D}_i$, $j = 1,\ldots,m$ independently;
      \State Sample $u_{i,j}^{k,t} \sim \mathbb{S}$, $j = 1,\ldots,m$ independently;
                \State Construct local estimator $G_{i}^{k,t}$ at $z_{i}^{k,t}$ using \eqref{eq.gradient estimator};
                \State Update $$\hat{z}_{i}^{k,t+1} = \hat{z}_{i}^{k,t} - \eta \left(G_{i}^{k,t}+c_i^k\right);$$
                \State Update $$z_{i}^{k,t+1} = \cP_{\cM}(\hat{z}_{i}^{k,t+1});$$
     \EndFor
            \State Send $\hat{z}_{i}^{k,t+1}$ to the server
            \State \textbf{Server:}
            \State Update $$x^{k+1}=\cP_{\cM}(x^k) + \eta_g\left(\frac{1}{n}\sum_{i=1}^n\hat{z}_{i}^{k,\tau}-\cP_{\cM}(x^k)\right)$$
            \State Broadcast $x^{k+1}$ to all the cilents
            \State\textbf{Client $i = 1,\cdots, n$ in parallel:}
            \State Receive $x^{k+1}$ from the server
            \State Update $$c_i^{k+1} = \frac{1}{\eta_g\eta\tau}\left(\cP_{\cM}(x^k)-x^{k+1}\right) - \frac{1}{\tau}\sum_{t = 0}^{\tau - 1}G_{i}^{k,t}$$ 
        \EndFor
        \State\textbf{Output:} $\cP_{\cM}(x^{K+1})$
 \end{algorithmic}
 \label{alg.federated}
\end{algorithm}
\begin{lemma}\label{lemma.special}
        For any $x\in\cM$, $v\in \R^{p\times r}$ and $\eta>0$, let $x^+=\proj{x-\eta v}$, if $x-\eta v\in \overline{U}_{\cM}(\gamma)$, then it follows that for any $z\in\cM$:
        \begin{equation}
            \begin{split}
                f(x^+)\le  &f(z)+\inner{\grad f(x)-v}{x^+-z}-\frac{1}{2\eta}(\|x^+ -x\|^2-\|z-x\|^2) \\
                 &-\left(\frac{1}{2\eta}-\frac{3\|v\|}{4\gamma}\right)\|z-x^+\|^2+\frac{L}{2}\|x^+-x\|^2+\frac{L}{2}\|z-x\|^2. \notag
            \end{split}
        \end{equation}
    \end{lemma}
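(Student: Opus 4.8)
The plan is to combine the Riemannian descent lemma for $f$ with the proximal-normal characterization of the projection, linked by the elementary three-point identity. Throughout I write $p \triangleq x - \eta v$, so that $x^+ = \proj{p}$, and set $w \triangleq p - x^+ = (x - x^+) - \eta v$. Since each $f_i$ inherits the Riemannian smoothness of $F_i$ by taking expectations, so does their average $f$; thus both the descent bound \eqref{eq.Lips inequality} and its companion lower bound hold for $f$ with the same constant $L$. Concretely, first I would record
\[
    f(x^+) \le f(x) + \inner{\grad f(x)}{x^+ - x} + \frac{L}{2}\|x^+ - x\|^2,
\]
together with the matching estimate $f(x) + \inner{\grad f(x)}{z-x} \le f(z) + \frac{L}{2}\|z-x\|^2$. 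The latter follows from exactly the argument behind \eqref{eq.Lips inequality}: one decomposes the Euclidean gradient into its Riemannian and normal parts and controls the normal part by \eqref{eq.normal}; because \eqref{eq.normal} applies verbatim to both $v$ and $-v$, the one-sided descent inequality upgrades to a two-sided bound with the same constant.

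The heart of the argument is the projection step. Since $p = x-\eta v\in\overline{U}_{\cM}(\gamma)$ by hypothesis, the projection $x^+$ is unique and $w = p-x^+\in\operatorname{N}_{x^+}\cM$; this is precisely the normal-space property already used for term $\mathrm{(II)}$ (Lemma \ref{lemma.projection normal property}). Applying \eqref{eq.normal} at the base point $x^+$ with normal vector $w$ and $y=z$ gives $\inner{w}{z-x^+}\le \frac{\|w\|}{4\gamma}\|z-x^+\|^2$. I would then bound $\|w\|$ crudely using the triangle inequality, the non-expansiveness \eqref{eq.lips=2}, and $\cP_{\cM}(x)=x$:
\[
    \|w\| \le \|p-x\| + \|x-x^+\| = \eta\|v\| + \|\cP_{\cM}(p)-\cP_{\cM}(x)\| \le \eta\|v\| + 2\eta\|v\| = 3\eta\|v\|,
\]
which is exactly where the constant $3$ in the statement originates.

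Substituting $w=(x-x^+)-\eta v$ and the three-point identity $\inner{x-x^+}{z-x^+} = \frac{1}{2}\big(\|x-x^+\|^2 + \|z-x^+\|^2 - \|x-z\|^2\big)$, then dividing by $\eta$ and rearranging, yields
\[
    \inner{v}{x^+ - z} \le \frac{3\|v\|}{4\gamma}\|z-x^+\|^2 - \frac{1}{2\eta}\|x^+-x\|^2 - \frac{1}{2\eta}\|z-x^+\|^2 + \frac{1}{2\eta}\|z-x\|^2.
\]
Finally I would assemble the pieces: starting from the descent bound I split $\inner{\grad f(x)}{x^+-x} = \inner{\grad f(x)-v}{x^+-z} + \inner{v}{x^+-z} + \inner{\grad f(x)}{z-x}$, estimate the middle term by the projection inequality just derived, and replace $f(x)+\inner{\grad f(x)}{z-x}$ by $f(z)+\frac{L}{2}\|z-x\|^2$ via the lower smoothness bound. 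Collecting the $\|z-x^+\|^2$, $\|x^+-x\|^2$, and $\|z-x\|^2$ terms reproduces the claimed inequality verbatim.

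The main obstacle is the projection step rather than the final algebra: one must verify that $w$ genuinely lies in $\operatorname{N}_{x^+}\cM$, which is exactly why the hypothesis $x-\eta v\in\overline{U}_{\cM}(\gamma)$ (placing $p$ inside the $\gamma$-tube) is needed, and one must carefully track the factor $3$ through the $\|w\|$ estimate. Once the projection inequality is in hand, the remaining steps are routine bookkeeping via the three-point identity.
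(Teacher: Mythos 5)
Your proof is correct and takes essentially the same route as the paper's: both reduce the lemma to the key inequality $\inner{v}{z-x^+}\ge \frac{1}{2\eta}\left(\|x^+-x\|^2-\|z-x\|^2\right)+\left(\frac{1}{2\eta}-\frac{3\|v\|}{4\gamma}\right)\|z-x^+\|^2$, obtained from the normal-space property of the projection residual together with the bound $\|x^+-(x-\eta v)\|\le 3\eta\|v\|$ from non-expansiveness \eqref{eq.lips=2}, and then combine it with the two-sided Riemannian descent bound from Lemma \ref{lemma.lispchitz continuity over Riemannian gradient}. The only cosmetic difference is that the paper organizes the quadratic algebra as strong convexity of $h(y)=\frac{1}{2\eta}\|y-(x-\eta v)\|^2$ plus the first-order optimality of $x^+$ (which is exactly the content of Lemma \ref{lemma.projection normal property} that you cite), whereas you invoke that lemma directly and expand via the three-point identity; these are the same computation.
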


\vspace{0.2cm}
For the convergence measure, we adopt the first-order suboptimality metric $\|\cG_{\tilde{\eta}}(\proj{x^k})\|$ proposed in \cite{zhang2024nonconvex}, defined as $ \cG_{\tilde{\eta}}(\proj{x^k}) \triangleq (\proj{x^k} - \tilde{x}^{k+1})/\tilde{\eta}$, where $\tilde{x}^{k+1}\triangleq\proj{\proj{x^k}-\tilde{\eta}\grad f(\proj{x^k})}$. Notice that $\tilde{x}^{k+1}$ represents the update of the centralized projected Riemannian gradient descent and is introduced solely for theoretical analysis. This suboptimality metric is valid since $\|\cG_{\tilde{\eta}}(\proj{x^k})\|=0$ if and only if $\|\grad f(\proj{x^k})\|=0$. For a detailed proof, we refer readers to Lemma A.2 in \cite{zhang2024nonconvex}. Since $\cP_{\cM}(\cdot)$ is sufficiently smooth over $\overline{U}_{\cM}(\gamma)$, we introduce $L_{\cP}=\max_{x\in\overline{U}_{\cM}(\gamma)}\|\operatorname{D}^2\proj{x}\|$ as the smoothness constant of its second-order differential, which is used in determining the step size. Now, we provide the theoretical guarantee of Algorithm \ref{alg.federated}.
\begin{theorem}\label{theorem.convergence}
    Suppose Assumptions \ref{assum.proximal-smooth}, \ref{assum.unbiased} and \ref{assum.Lipschitz} hold. If the step sizes satisfy $\eta_g = \sqrt{n}$ and
    \begin{align*}
       \tilde{\eta}\triangleq \eta_g\eta\tau\le \min\left\{\frac{1}{24ML},\frac{\gamma}{6\max\{\chi_G,\chi\}},\frac{1}{\chi L_{\cP}}\right\}, 
    \end{align*}
    then we have 
    \begin{align*}
        \frac{1}
        {K}\sum_{k=1}^K\|\cG_{\tilde{\eta}}(\proj{x^k})\|^2 
        \le \frac{8\Omega^1}{\sqrt{n}\eta\tau K}+\frac{64}{n\tau}\left(\chi_1\mu^2+\frac{\chi_2}{m}\right) 
        +\frac{16(3+n)\left(\chi_G + \chi\right)\chi_f}{n}\mu,    
    \end{align*}
     where $\Omega^1>0$ is a constant related to initialization. Specifically, set the smoothing parameter $\mu = \mathcal{O}(1/p r n\tau K)$, and it follows
     \begin{align*}
        &\frac{1}
        {K}\sum_{k=1}^K\|\cG_{\tilde{\eta}}(\proj{x^k})\|^2 
        = \mathcal{O}\left(\frac{1}{\sqrt{n} \tau K} + \frac{1}{n\tau m}\right).
     \end{align*}
\end{theorem}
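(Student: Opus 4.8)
The plan is to collapse each communication round into a single projected Euclidean step, invoke the descent inequality of Lemma \ref{lemma.special}, and then control the discrepancy between the realized update direction and $\grad f$ at the anchor point using the moment bounds of Lemmas \ref{lemma.first-order moment}--\ref{lemma.second-order moment} together with a client-drift estimate engineered to be cancelled by the correction terms. Write $\bar x^k \triangleq \proj{x^k}$. First I would unroll the local recursion $\hat z_i^{k,t+1} = \hat z_i^{k,t} - \eta(G_i^{k,t} + c_i^k)$ from $\hat z_i^{k,0} = \bar x^k$ and substitute into the server step, obtaining $x^{k+1} = \bar x^k - \tilde\eta\, v^k$ with effective direction $v^k \triangleq \frac{1}{n\tau}\sum_{i=1}^n\sum_{t=0}^{\tau-1}G_i^{k,t} + \frac{1}{n}\sum_{i=1}^n c_i^k$. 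A one-line induction on the correction update shows the invariant $\frac1n\sum_i c_i^k = \frac1n\sum_i c_i^1 = 0$ is preserved for all $k$, so the averaged correction vanishes and $v^k = \frac{1}{n\tau}\sum_{i,t}G_i^{k,t}$, which is bounded by $\chi_G$ through Lemma \ref{lemma.uniform boundness}; moreover $\|c_i^k\|\le 2\chi_G$ follows from its defining identity. The step-size bound $\tilde\eta \le \gamma/(6\max\{\chi_G,\chi\})$ then keeps $\bar x^k - \tilde\eta v^k$ and all local iterates inside the tube $\overline U_{\cM}(\gamma)$, licensing every use of non-expansiveness \eqref{eq.lips=2} and of Lemma \ref{lemma.special}.

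Second, applying Lemma \ref{lemma.special} with $x=z=\bar x^k$, $v=v^k$, $x^+=\bar x^{k+1}=\proj{\bar x^k - \tilde\eta v^k}$ and merging the quadratic terms yields, for a constant $c_0 \ge \tfrac{1}{2\tilde\eta}$ guaranteed by the step-size conditions,
\begin{equation*}
f(\bar x^{k+1}) \le f(\bar x^k) + \inner{\grad f(\bar x^k) - v^k}{\bar x^{k+1}-\bar x^k} - c_0\|\bar x^{k+1}-\bar x^k\|^2.
\end{equation*}
Splitting the inner product with Young's inequality retains a negative multiple of $\|\bar x^{k+1}-\bar x^k\|^2$, and I would convert this into the target metric through $\tilde\eta^2\|\cG_{\tilde\eta}(\bar x^k)\|^2 = \|\bar x^k - \tilde x^{k+1}\|^2 \le 2\|\bar x^k - \bar x^{k+1}\|^2 + 8\tilde\eta^2\|v^k - \grad f(\bar x^k)\|^2$, where the last step uses \eqref{eq.lips=2} on $\tilde x^{k+1} = \proj{\bar x^k - \tilde\eta\grad f(\bar x^k)}$ versus $\bar x^{k+1}$. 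Rearranging produces a per-round bound $\|\cG_{\tilde\eta}(\bar x^k)\|^2 \lesssim \tfrac{1}{\tilde\eta}\big(f(\bar x^k)-f(\bar x^{k+1})\big) + \|v^k-\grad f(\bar x^k)\|^2$, which telescopes: after summing over $k$ and dividing by $K$, the optimality-gap term becomes $\tfrac{8\Omega^1}{\tilde\eta K} = \tfrac{8\Omega^1}{\sqrt n\eta\tau K}$ with $\Omega^1 = f(\bar x^1)-f^*$.

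Third, the heart of the argument is bounding $\E\|v^k - \grad f(\bar x^k)\|^2$. I would split it into (i) a variance part, handled by Lemma \ref{lemma.second-order moment} together with the conditional independence of the samples $\{\xi_{i,j}^{k,t},u_{i,j}^{k,t}\}$ across $(i,t)$, which supplies the $\tfrac1{n\tau}$ averaging and hence the $\chi_2/(n\tau m)$ speedup term; (ii) a bias part of order $\chi_f\mu$ from Lemma \ref{lemma.first-order moment}; and (iii) a client-drift part arising from replacing $\grad f_i(z_i^{k,t})$ by $\grad f_i(\bar x^k)$, controlled by the Riemannian-gradient Lipschitz bound \eqref{eq.grad Lips} applied to $\|z_i^{k,t}-\bar x^k\|$. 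The decisive point is that the correction term $c_i^k$ is built precisely so that the effective direction $G_i^{k,t}+c_i^k$ tracks the \emph{global} gradient rather than the local one; substituting its defining identity shows the drift obeys $\|z_i^{k,t}-\bar x^k\| \lesssim \eta\tau\big(\|\grad f(\bar x^k)\| + \text{errors}\big)$, so its contribution is proportional to $\tilde\eta^2\|\grad f(\bar x^k)\|^2$ plus bias and variance, and the gradient-proportional piece can be absorbed back into the left-hand side $\|\cG_{\tilde\eta}(\bar x^k)\|^2$ once $\tilde\eta$ is small (which is exactly what the bounds $\tilde\eta\le \tfrac1{24ML}$ and $\tilde\eta\le 1/(\chi L_{\cP})$ secure).

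Finally, I would assemble the three estimates, absorb the gradient-proportional drift, and read off
\begin{equation*}
\frac1K\sum_{k=1}^K \E\|\cG_{\tilde\eta}(\bar x^k)\|^2 \le \frac{8\Omega^1}{\sqrt n\eta\tau K} + \frac{64}{n\tau}\Big(\chi_1\mu^2 + \frac{\chi_2}{m}\Big) + \frac{16(3+n)(\chi_G+\chi)\chi_f}{n}\mu,
\end{equation*}
and then substitute $\mu = \mathcal O(1/(prn\tau K))$. Since $\chi_f,\chi_G,\chi = \mathcal O(pr)$ and $\chi_1 = \mathcal O(p^2r^2)$, the two $\mu$-dependent groups collapse to $\mathcal O(1/(n\tau K))$ and lower order, leaving the advertised $\mathcal O\big(\tfrac1{\sqrt n\tau K} + \tfrac1{n\tau m}\big)$. \emph{The main obstacle} I anticipate is precisely step three: showing rigorously that the SCAFFOLD-style correction cancels the heterogeneity-induced drift bias, so that no non-vanishing constant survives and the residual drift is genuinely gradient-proportional. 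This requires carefully tracking $c_i^k$ through one round, using the invariant $\frac1n\sum_i c_i^k=0$ and the nonlinearity of $\proj{\cdot}$, and is the one place where the moment lemmas, the Lipschitz estimate \eqref{eq.grad Lips}, and the proximal-smoothness inequality \eqref{eq.normal} all interact.
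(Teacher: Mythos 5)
Your steps 1, 2, and 4 are sound and essentially parallel the paper: the invariant $\frac{1}{n}\sum_i c_i^k = 0$ is exactly the paper's Lemma \ref{lemma.equivalent update}, and your single application of Lemma \ref{lemma.special} with $z=x=\proj{x^k}$ combined with non-expansiveness \eqref{eq.lips=2} to compare $\proj{x^{k+1}}$ with $\tilde{x}^{k+1}$ is a legitimate (arguably cleaner) alternative to the paper's double application of Lemma \ref{lemma.special}. The gap is in step 3, which you correctly flag as the crux but resolve incorrectly. Substituting the identity for $c_i^k$ (Lemma \ref{lemma.equivalent update}) into the local update gives, as in the paper's inequality \eqref{eq.(a1)middle},
\begin{equation*}
G_i^{k,l}+c_i^k-\grad f(\proj{x^k}) \;=\; \left(G_i^{k,l}-\grad f_i(\proj{x^k})\right) \;+\; \frac{1}{\eta\tau}\left(\Lambda_i^k-\overline{\Lambda}^k\right),
\end{equation*}
where $\Lambda_i^k-\overline{\Lambda}^k$ contains the \emph{previous round's} per-client estimator deviations $\eta\sum_t\bigl(\overline{G}^{k-1,t}-G_i^{k-1,t}\bigr)$ together with the heterogeneity $\eta\tau\bigl(\grad f_i(\proj{x^k})-\grad f(\proj{x^k})\bigr)$. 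These quantities are $\mathcal{O}(1)$ (bounded only by $\chi_G$ and $\chi$); they are neither of order $\mu$ or $1/m$, nor proportional to $\|\grad f(\proj{x^k})\|$. So your claim that "tracking $c_i^k$ through one round" shows the residual drift is gradient-proportional plus vanishing errors is false as stated: within a single round the drift carries a persistent $O(1)$ deviation term, and your per-round bound $\|\cG_{\tilde\eta}(\proj{x^k})\|^2 \lesssim \frac{1}{\tilde\eta}(f(\proj{x^k})-f(\proj{x^{k+1}})) + \|v^k-\grad f(\proj{x^k})\|^2$ would not telescope to the advertised rate.

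What makes the argument close is a \emph{cross-round} Lyapunov device, which is the structural core of the paper's proof and is absent from your plan. The paper augments the objective gap with the deviation term, defining $\Omega^k = f(\proj{x^k})-f^* + \frac{1}{n\tilde\eta}\|\blambda^k-\overline{\blambda}^k\|^2$, and proves two coupled lemmas: one bounding the new deviation $\frac{1}{n}\E\|\blambda^{k+1}-\overline{\blambda}^{k+1}\|^2$ by round-$k$ quantities (gradient-proportional terms, the old deviation $\|\blambda^k-\overline{\blambda}^k\|^2$, and bias/variance errors), and one giving the function-value descent whose drift term is likewise expressed through $\|\blambda^k-\overline{\blambda}^k\|^2$. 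Only the \emph{sum} $\Omega^k$ decreases (up to the error terms), with the small step size making the deviation recursion contractive; summing $\Omega^k-\Omega^{k+1}$ then yields the theorem. This is also why the paper's $\Omega^1$ is not simply $f(\proj{x^1})-f^*$ as in your step 2, but includes an initial heterogeneity term $\frac{1}{n\tilde\eta}\|\blambda^1-\overline{\blambda}^1\|^2$. Without introducing this potential (or an equivalent recursion across rounds), your step 3 cannot be completed.
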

\begin{remark}
    By properly setting the value of the smoothing parameter, the convergence rate exhibits an inverse dependency on the number of clients $n$, a property commonly referred to as linear speedup \cite{lian2017can,tang2018d,li2024problem}. Besides, it also indicates that multiple local updates contribute to faster convergence. Compared to the first-order counterpart in \cite{zhang2024nonconvex}, our algorithm utilizes a smaller step size due to the zeroth-order oracle, as the step size choice is inversely proportional to $\chi_G = \mathcal{O}(pr)$.
\end{remark}

\section{Numerical Experiments}
In this section, we conduct three experiments to evaluate the performance of the proposed estimator and algorithm. In the first experiment, we compare estimators \eqref{eq.existing gradient estimator} and \eqref{eq.gradient estimator} by incorporating them into zeroth-order Riemannian gradient descent to validate the computational advantage of our proposed estimator. The remaining two experiments focus on zeroth-order attacks on deep neural networks and low-rank neural network training under different rank constraints. Various parameter configurations are tested to support the theoretical results of the proposed algorithm \ref{alg.federated}.

% (1) black-box adversarial attacks with varying perturbation budgets and query thresholds, and (2) low-rank neural network training under different rank constraints. Simulations across various parameter configurations demonstrate the effectiveness of the proposed algorithm.
\subsection{Centralized kPCA}
In this subsection, we assess the computational advantage of the proposed estimator \eqref{eq.gradient estimator}. To isolate its performance, we do not consider the FL framework and instead evaluate the following centralized kernel principal component analysis (kPCA) problem over the Stiefel manifold:
\begin{align*}
\min_{x \in \operatorname{St}(p,r)} \ f(x) = -\frac{1}{2N}\sum_{j=1}^N \operatorname{Tr}\left(x^\top H_j x\right),
\end{align*}
where $\{H_j\}_{j=1}^N$ represents the dataset. The global optimum $f^*$ is precomputed using Pymanopt \cite{townsend2016pymanopt} with the Riemannian trust-region method. To evaluate the computational efficiency of our estimator, we apply the zeroth-order Riemannian gradient descent method to solve the problem. We compare our proposed estimator $G^{\mathtt{p}}_{\mu}(x)$ \eqref{eq.gradient estimator} with the existing Riemannian estimator $G^{\mathtt{R}}_{\mu}(x)$ \eqref{eq.existing gradient estimator}. Our estimator utilizes the projection operator, and the existing one uses polar decomposition and exponential mapping as retractions. The smoothing parameter $\mu$ and batch size $m$ are set identically across all estimators for a fair comparison. 

We conduct experiments on the Digits and Iris datasets from scikit-learn \cite{pedregosa2011scikit}. Fig. \ref{fig:efficiency} presents the objective function value gap over runtime. The results align with our theoretical analysis: leveraging the projection operator eliminates the need for sampling tangent random vectors, significantly reducing computation time.

\begin{figure}[H]
\centering
\subfigure[Iris: $N = 150$, $r = 2$, $\mu = 1e-4$, $m = 100$]{\includegraphics[width=0.4\textwidth]{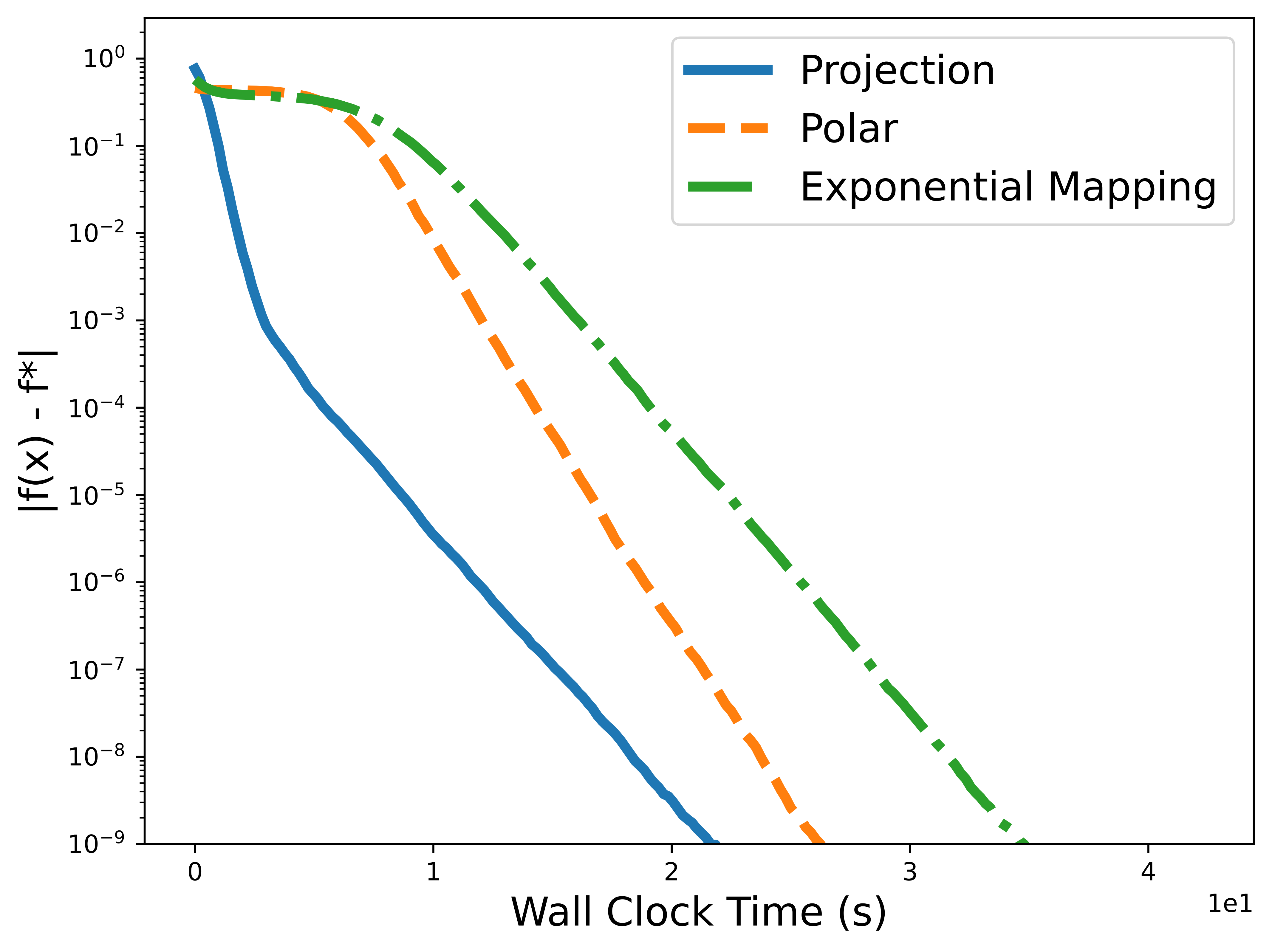}} 
\subfigure[Digits: $N = 200$, $r = 4$, $\mu = 1e-4$, $m = 50$]{\includegraphics[width=0.4\textwidth]{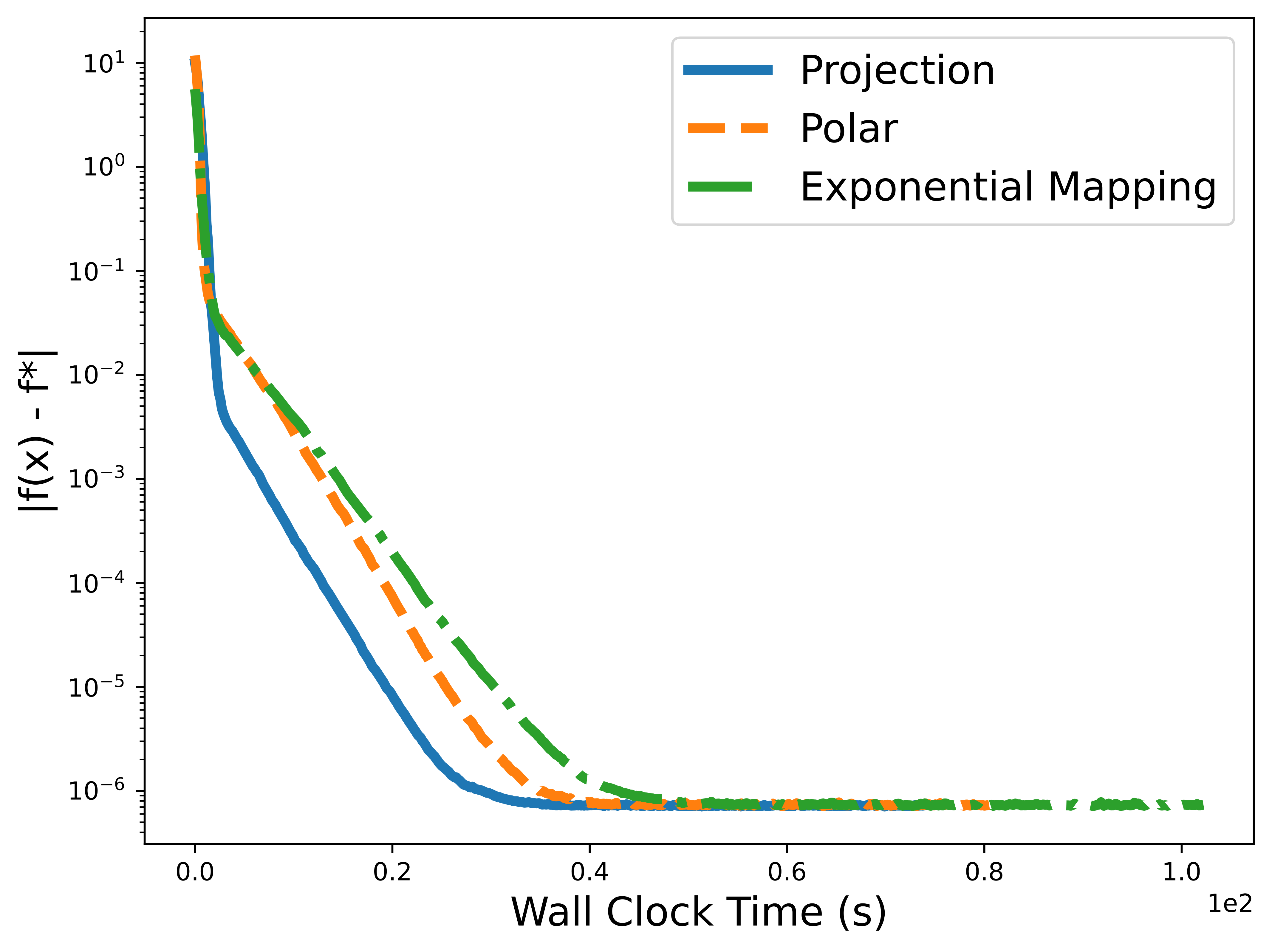}}
\vspace{-10pt}
\caption{Comparison with the existing estimator.}
\label{fig:efficiency}
\end{figure}

% \begin{figure}[H]
%     \begin{center}
%         \minipage{0.33\textwidth}
%             \subfigure[$m = 500, r = 10$]{%
%                 \includegraphics[width=0.9\linewidth]{federated/images/compare_iris.pdf}%
%             }
%         \endminipage\hfill
%         \minipage{0.33\textwidth}
%             \subfigure[$m = 500, r = 40$]{%
%                 \includegraphics[width=0.9\linewidth]{federated/images/compare_200timeseed0.pdf}%
%             }
%         \endminipage
%         \caption{The results of problem~\eqref{eq.low-rank recover} with $m=500$ and varying rank $r = 10, 20, 40$. }
%         \label{fig:problem12_m500_ranks}
%     \end{center}
% \end{figure}

\begin{figure*}[h]
\centering
\includegraphics[width=\linewidth]{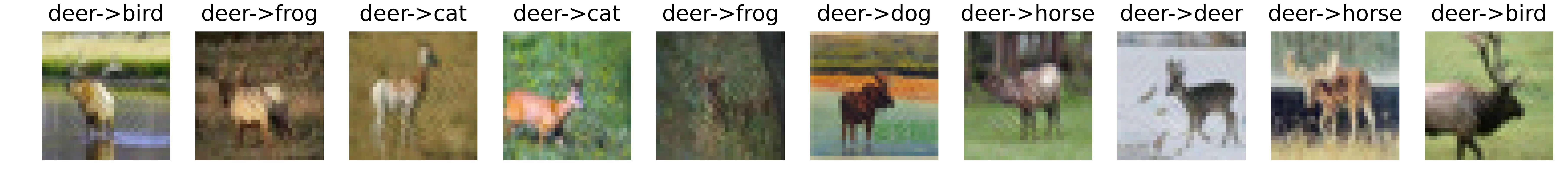} % 
\vspace{-0.8cm}
\caption{Example of the proposed attack algorithm on a deer picture. Each subfigure demonstrates the transformation from the original image (left) to the adversarial example (right), with the true label and the predicted label after the attack indicated by the notation $\text{true label} \rightarrow \text{predicted label}$. 
The results highlight the robustness of the attack algorithm in inducing misclassification across diverse inputs.}
\label{fig:attack}
\end{figure*}

\subsection{Zeroth-order attacks on deep neural networks}
Existing black-box attacks on deep neural networks utilize Euclidean zeroth-order optimization algorithms to design adversarial attacks \citep{chen2017zoo,tu2019autozoom}. However, a drawback of existing approaches is that the perturbed examples designed to fool the neural network may not belong to the same domain as the original training data \citep{li2023stochastic}. For instance, while natural images typically lie on a manifold \citep{weinberger2006unsupervised}, conventional perturbations are not restricted to this manifold. 

We consider black-box attacks targeting image classification deep learning models that have been well-trained on publicly available CIFAR-10 dataset, which is widely recognized as a standard benchmark for black-box adversarial attacks in the vision community. \citep{fang2022communication,yi2022zeroth}. The goal of this task is to collaboratively generate a subtle perturbation that remains visually imperceptible to humans while causing the classifier to make incorrect predictions. In our experiment, we assume the manifold is a sphere. 
The sphere manifold naturally enforces the $\ell_2$ distortion constraint $\|\delta\|_2 \leq \epsilon$, which aligns with human visual perception as demonstrated in foundational works \cite{carlini2017towards}. This geometric consistency ensures perturbations remain imperceptible while maximizing attack success. 
As shown in Fig \ref{fig:attack}, a deer image perturbed with small $\ell_2$ distortion successfully misclassifies the target model while preserving visual fidelity.
%This formulation aligns with the optimal $\ell_2$-norm attack studied in \citet{lyu2015unified}. Furthermore, the sphere constraint guarantees that the perturbed image is always in a certain distance from the original image.

We adopt the same data-partitioning protocol as in \citet{fang2022communication, li2023stochastic}. A 4-layer CNN pre-trained on CIFAR-10(82\% accuracy) is deployed on each client. The 4992 correctly-classified deer images are sharded without overlap so each client holds a private, label-noise-free subset. All adversarial objectives are optimized with the geodesic update on the $\ell_2$-norm sphere, using batch size $25$, step size $0.001$ and balancing coefficient $c=1$. The convergence results are reported in communication rounds across Figures \ref{fig:blackbox-tau}, \ref{fig:blackbox-n}, and \ref{fig:blackbox-q}. The results support the theoretical findings in Theorem \ref{theorem.convergence}, demonstrating consistent performance across varying local update steps, client numbers, and batch sizes of the gradient estimator. Specifically, we present visualizations of adversarial examples generated by our attack framework, as illustrated in Figure \ref{fig:attack}.

\begin{figure}[H]
\centering
\subfigure[]{\includegraphics[width=0.4\textwidth]{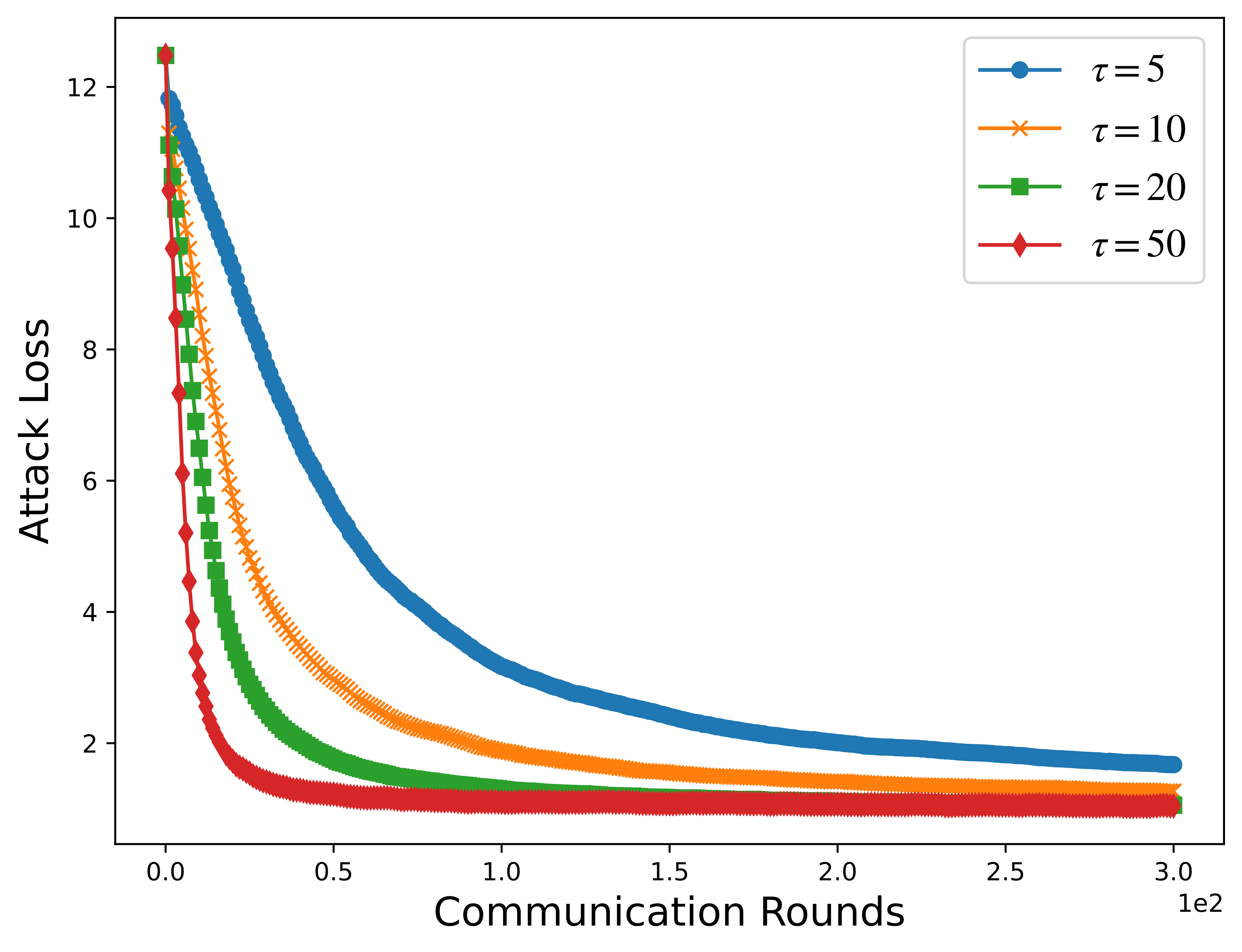}} 
\subfigure[]{\includegraphics[width=0.4\textwidth]{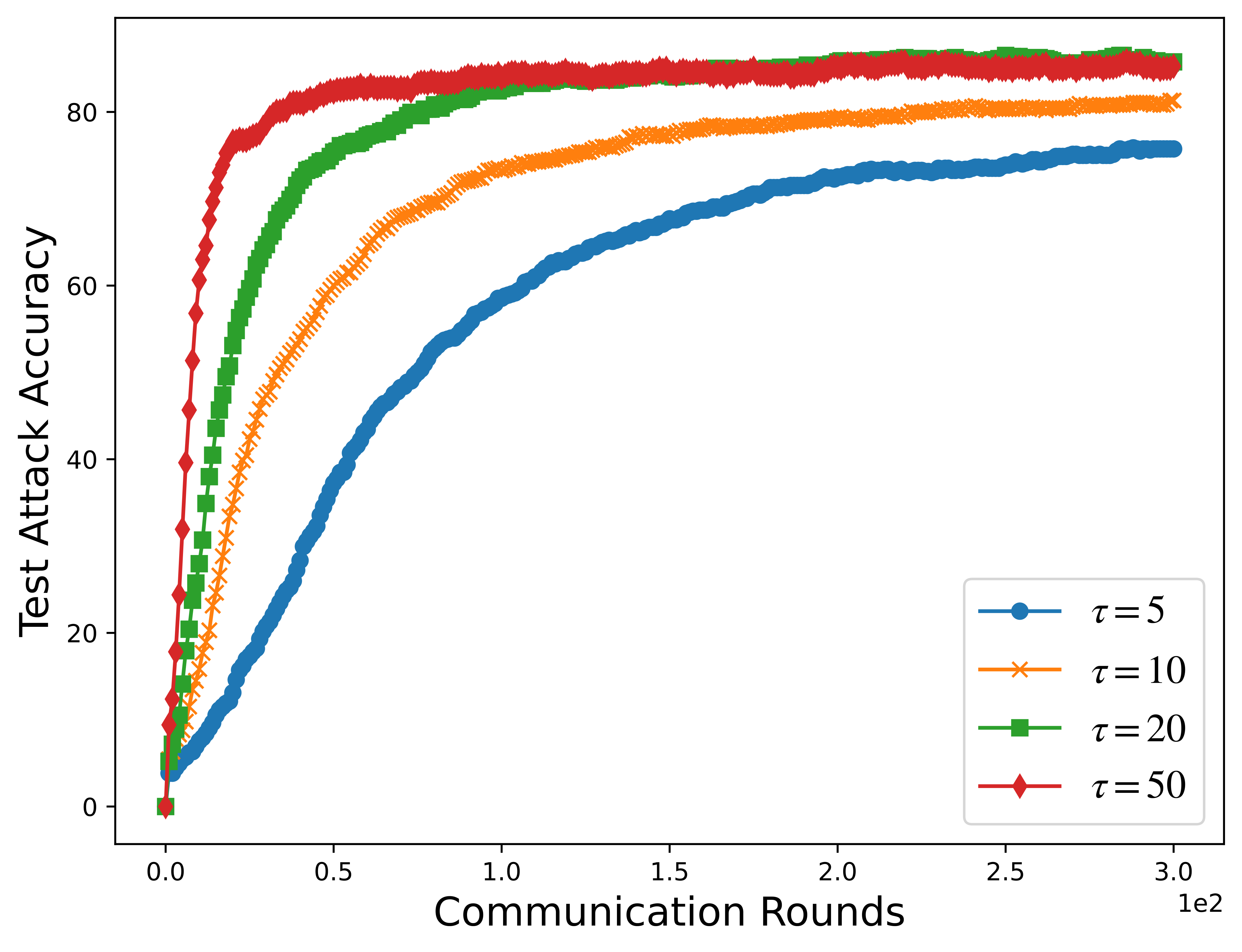}}
\vspace{-10pt}
\caption{Impact of number of local updates.}
\label{fig:blackbox-tau}
\end{figure}
\vspace{-10pt}
\begin{figure}[H]
\centering
\subfigure[]{\includegraphics[width=0.4\textwidth]{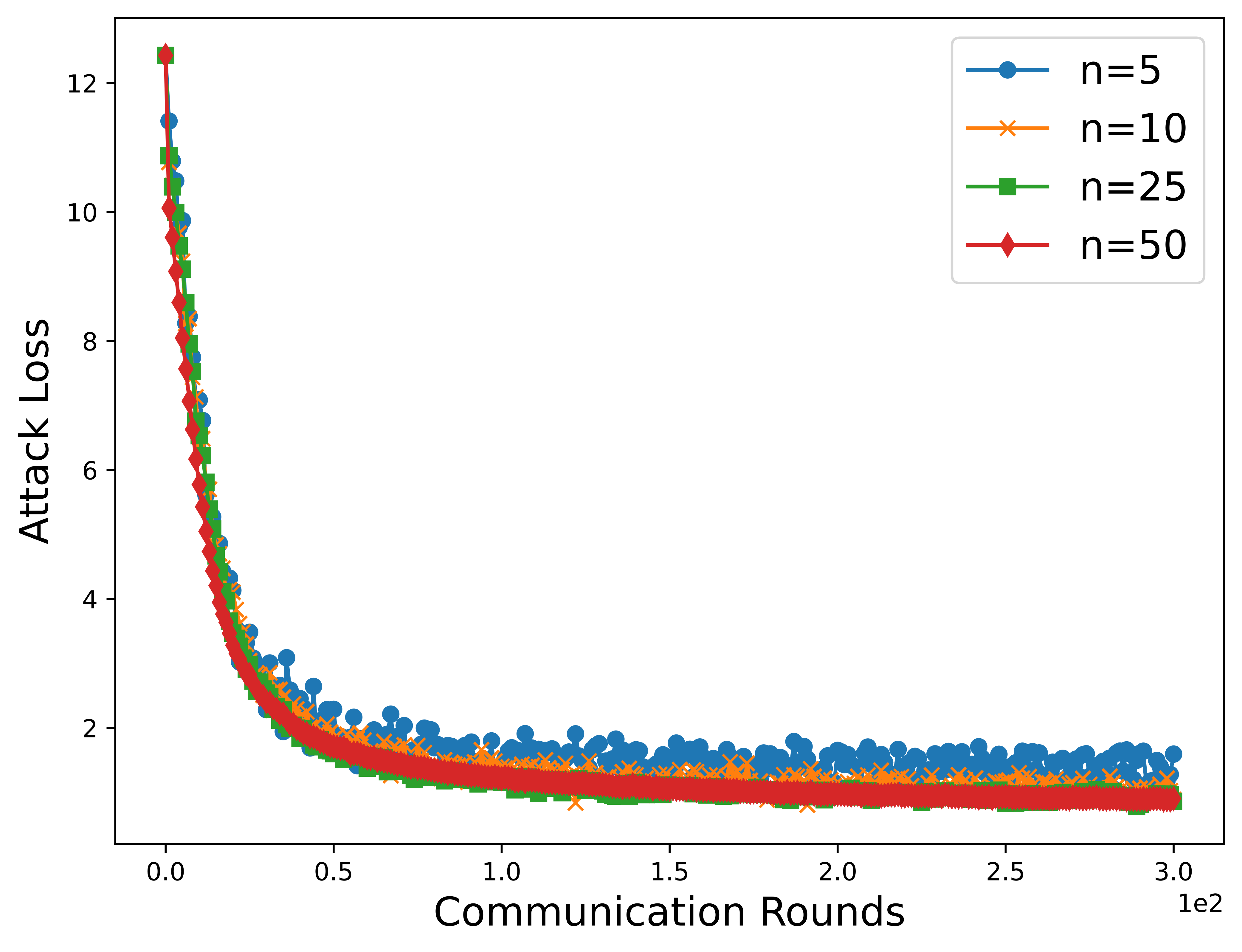}}
\subfigure[]{\includegraphics[width=0.4\textwidth]{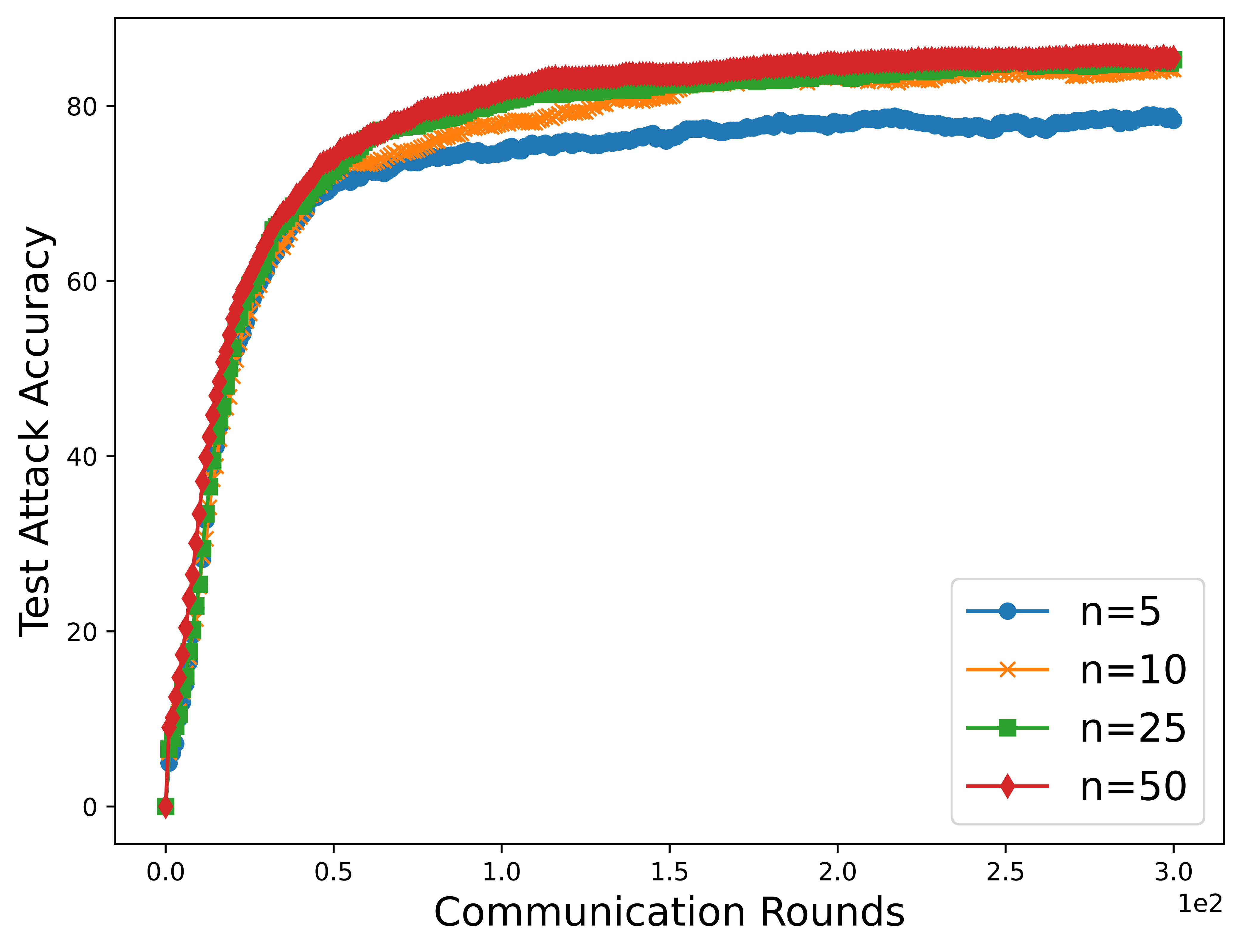}}
\vspace{-10pt}
\caption{Impact of number of participated clients.}
\label{fig:blackbox-n}
\end{figure}
\vspace{-10pt}
\begin{figure}[H]
\centering
\subfigure[]{\includegraphics[width=0.4\textwidth]{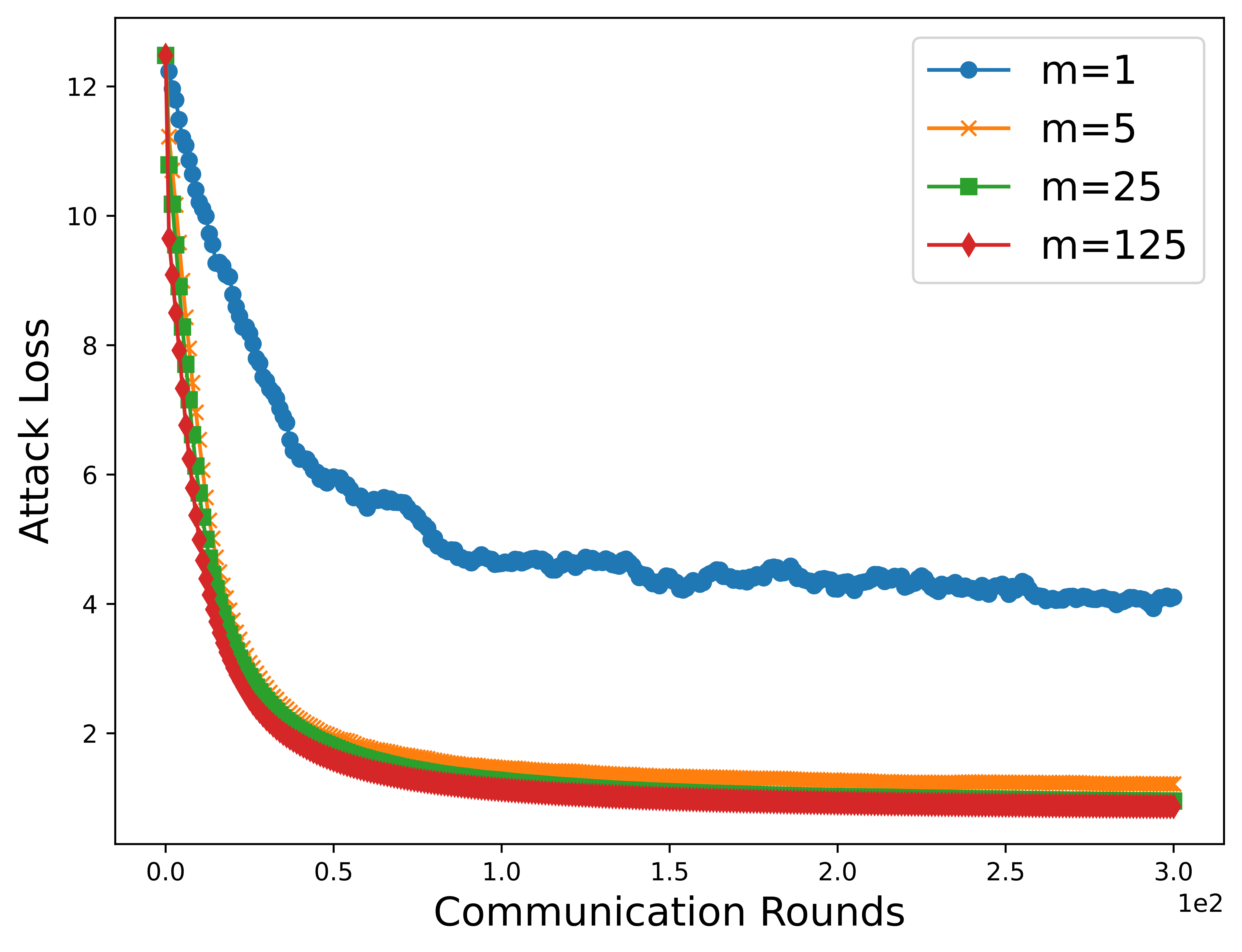}}
\subfigure[]{\includegraphics[width=0.4\textwidth]{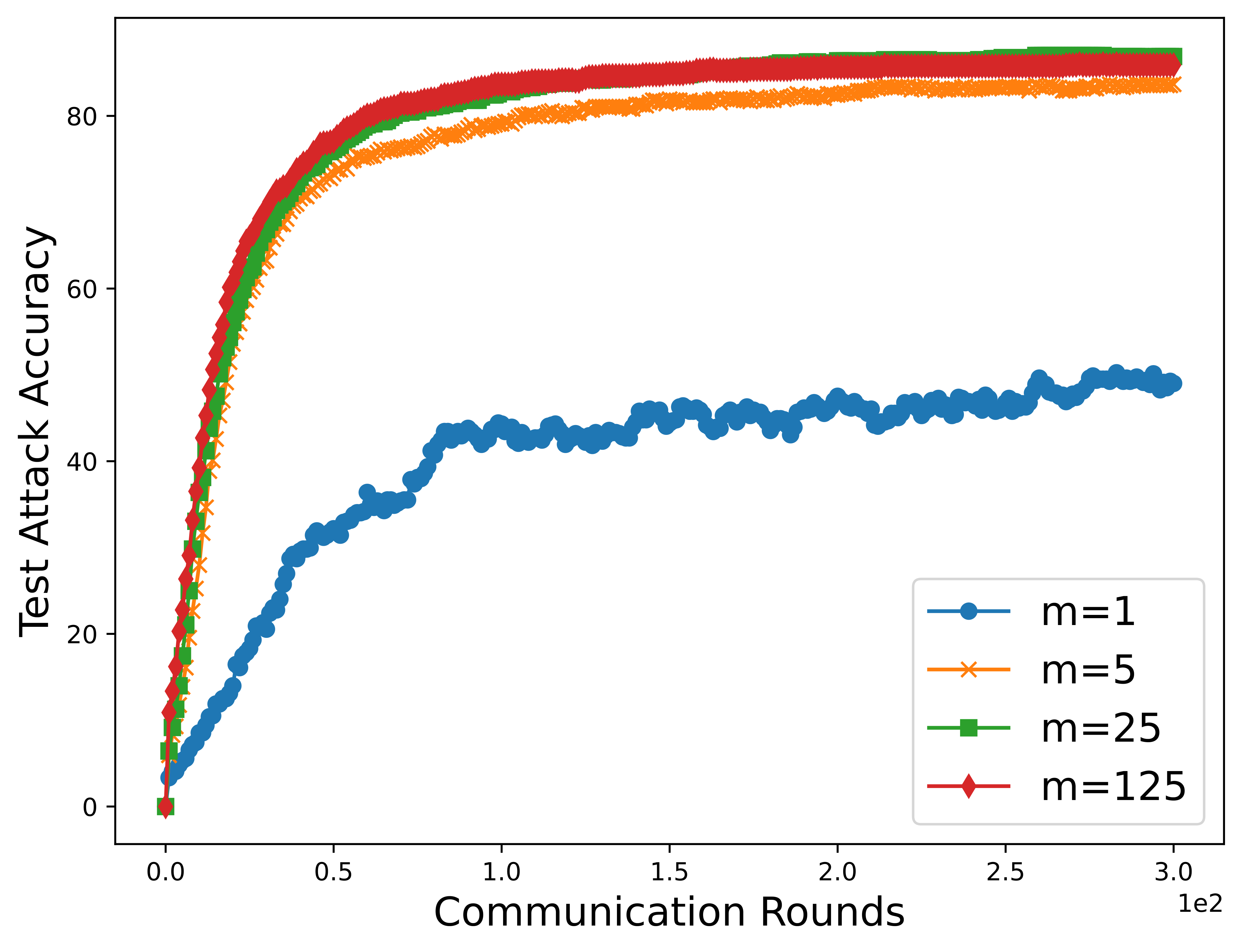}}
\vspace{-10pt}
\caption{Impact of the batch size in zeroth-order Riemannian gradient estimator.}
\label{fig:blackbox-q}
\end{figure}

\subsection{Federated low-rank neural network training}
% Data privacy is a critical concern in federated learning (FL). Federated low-rank model training addresses key challenges, such as resource limitations and communication bottlenecks, by leveraging the low-rank structure inherent in models or data representations. This approach is especially beneficial in applications where data or model parameters are concentrated on a compact, smooth submanifold, enabling efficient learning while safeguarding privacy.

Low-rank techniques are widely used to reduce computational costs in training machine learning models. By constraining the global parameter space to a low-rank matrix manifold $\cM = \{x \in \mathbb{R}^{p \times r}: \operatorname{rank}(x) = R\}$, the manifold optimization framework ensures an exact low-rank model. We follow the low-rank model compression approach for FL proposed in \citet{xue2023riemannian} to evaluate our algorithm.

We validate our approach on the MNIST dataset \citep{deng2012mnist} using a low-rank multilayer perceptron with rank-constrained hidden layers. As shown in Figure \ref{fig:lowrank-N}, increasing the number of participating clients from 10 to 30 improves test accuracy. Figure \ref{fig:lowrank-q} illustrates the impact of batch size in constructing the zeroth-order gradient estimator: increasing the batch size from 5 to 100 reduces gradient bias. Notably, Figure \ref{fig:lowrank-R} highlights that optimal rank selection strikes a near-optimal balance, reducing communication costs compared to full-rank models while maintaining accuracy. These results demonstrate that our proposed algorithm effectively balances model performance and communication efficiency in low-rank FL training.

\begin{figure}[H]
\centering
\subfigure[]{\includegraphics[width=0.4\textwidth]{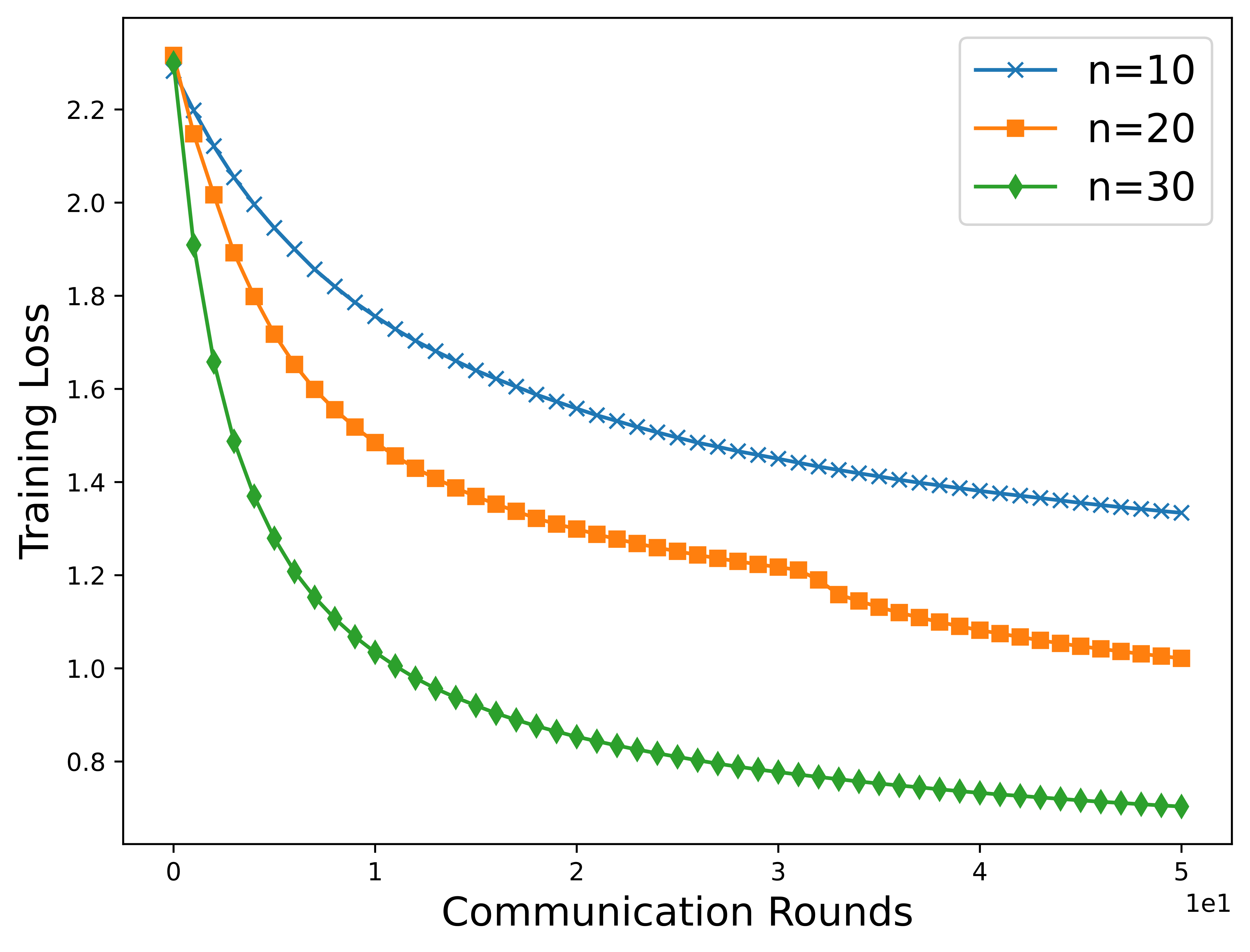}}
\subfigure[]{\includegraphics[width=0.4\textwidth]{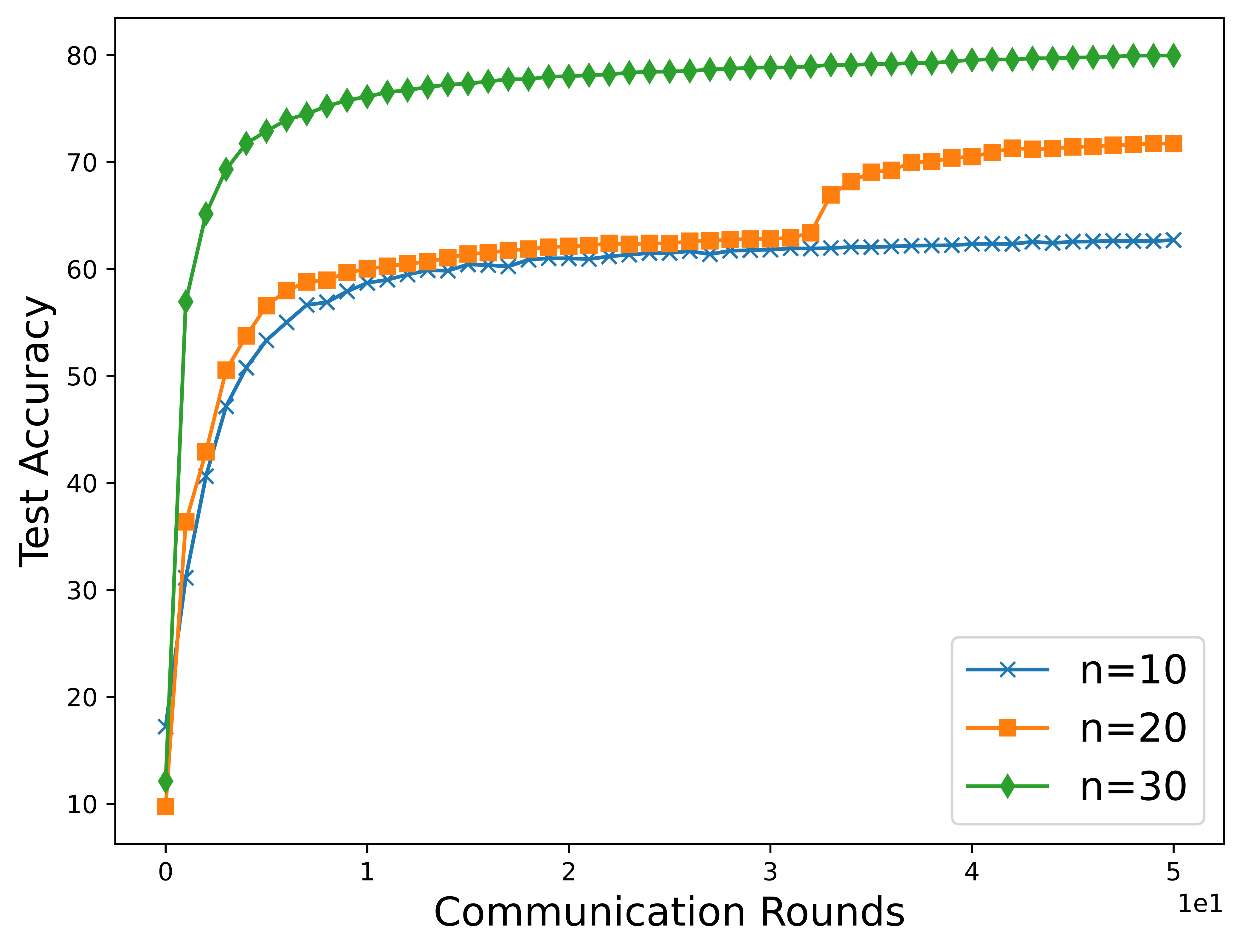}}
\vspace{-10pt}
\caption{Impact of number of participated clients.}
\label{fig:lowrank-N}
\end{figure}
\vspace{-30pt}
\begin{figure}[H]
\centering
\subfigure[]{\includegraphics[width=0.4\textwidth]{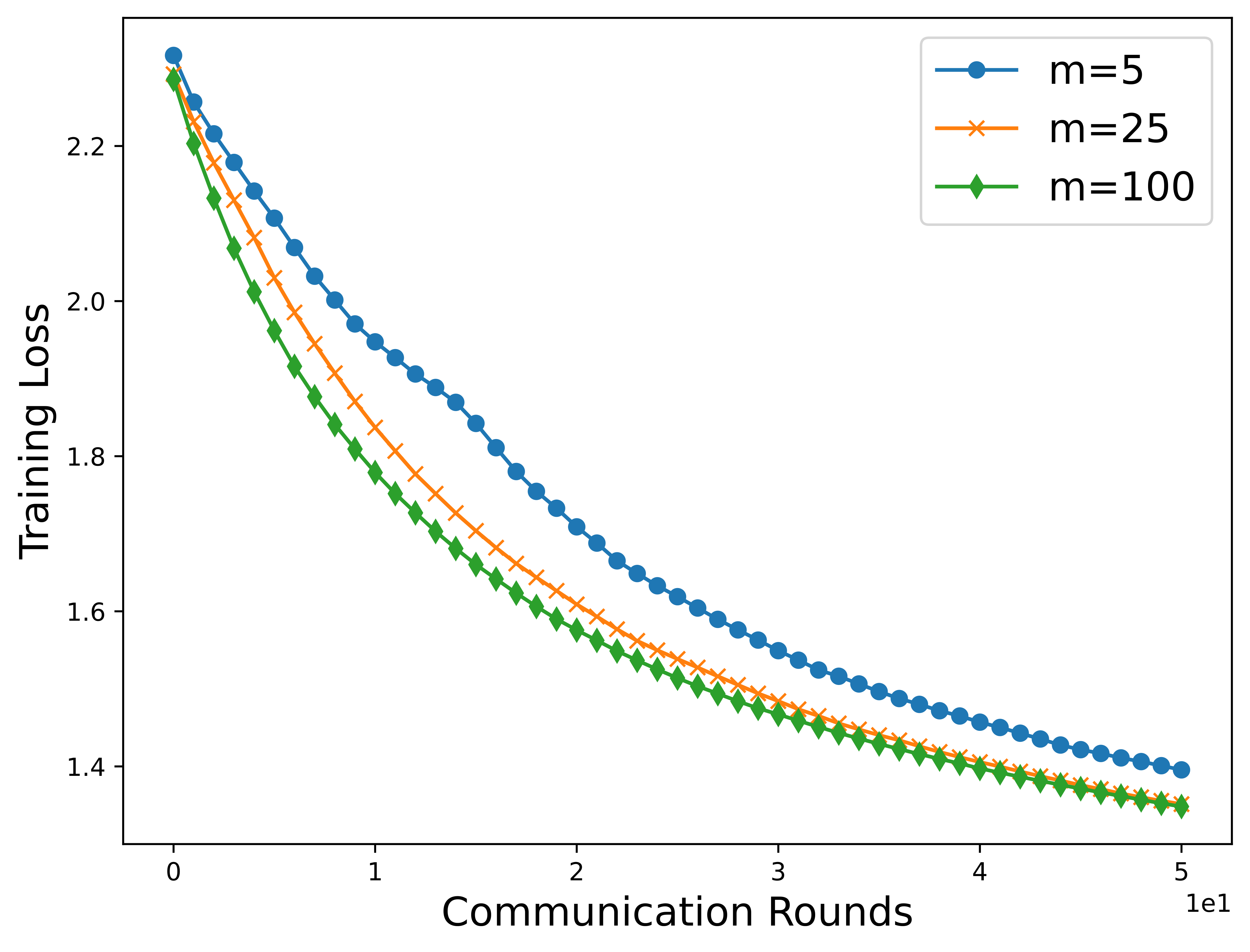}}
\subfigure[]{\includegraphics[width=0.4\textwidth]{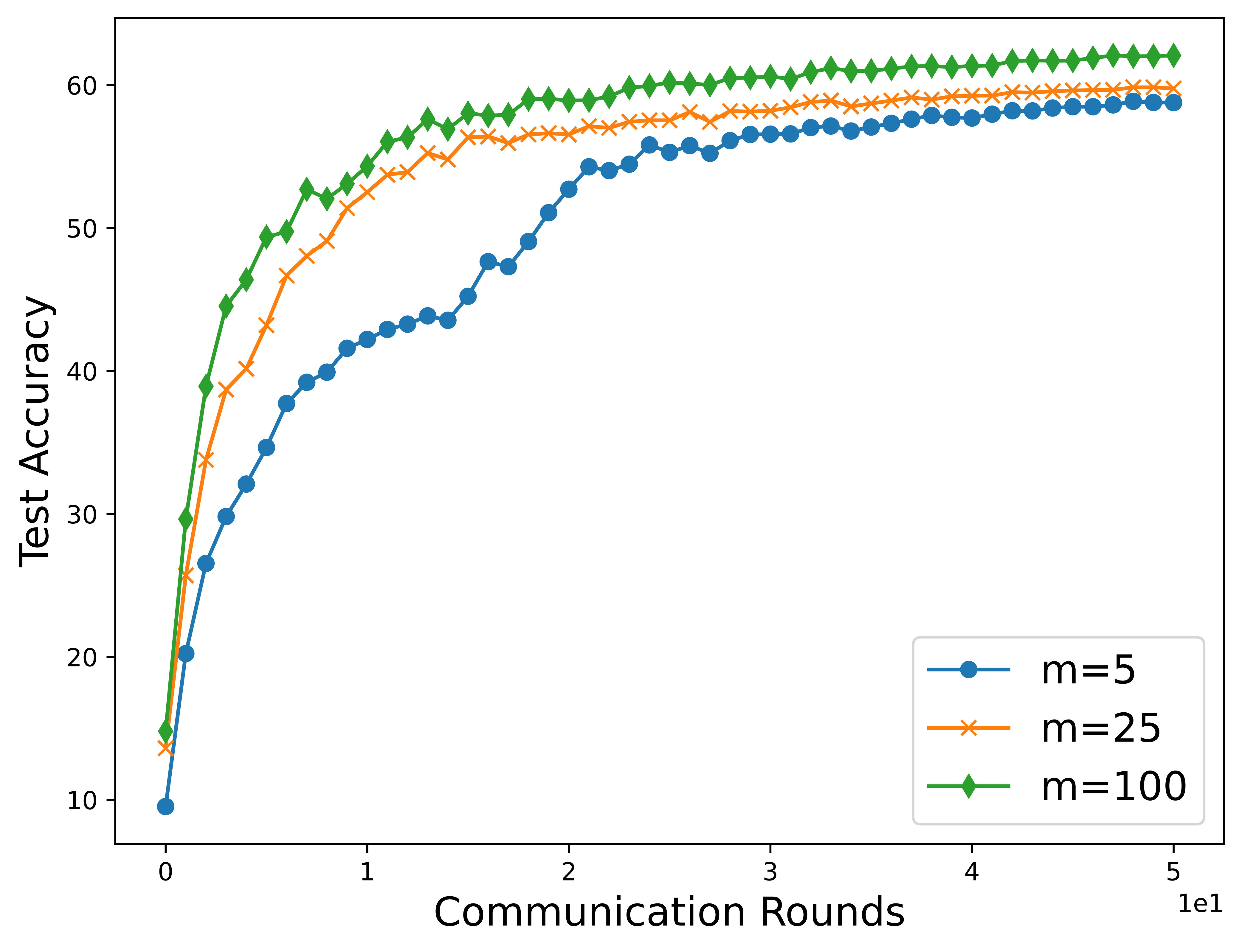}}
\vspace{-10pt}
\caption{Impact of the batch size in zeroth-order Riemannian gradient estimator.}
\label{fig:lowrank-q}
\end{figure}

\begin{figure}[H]
\centering
\subfigure[]{\includegraphics[width=0.4\textwidth]{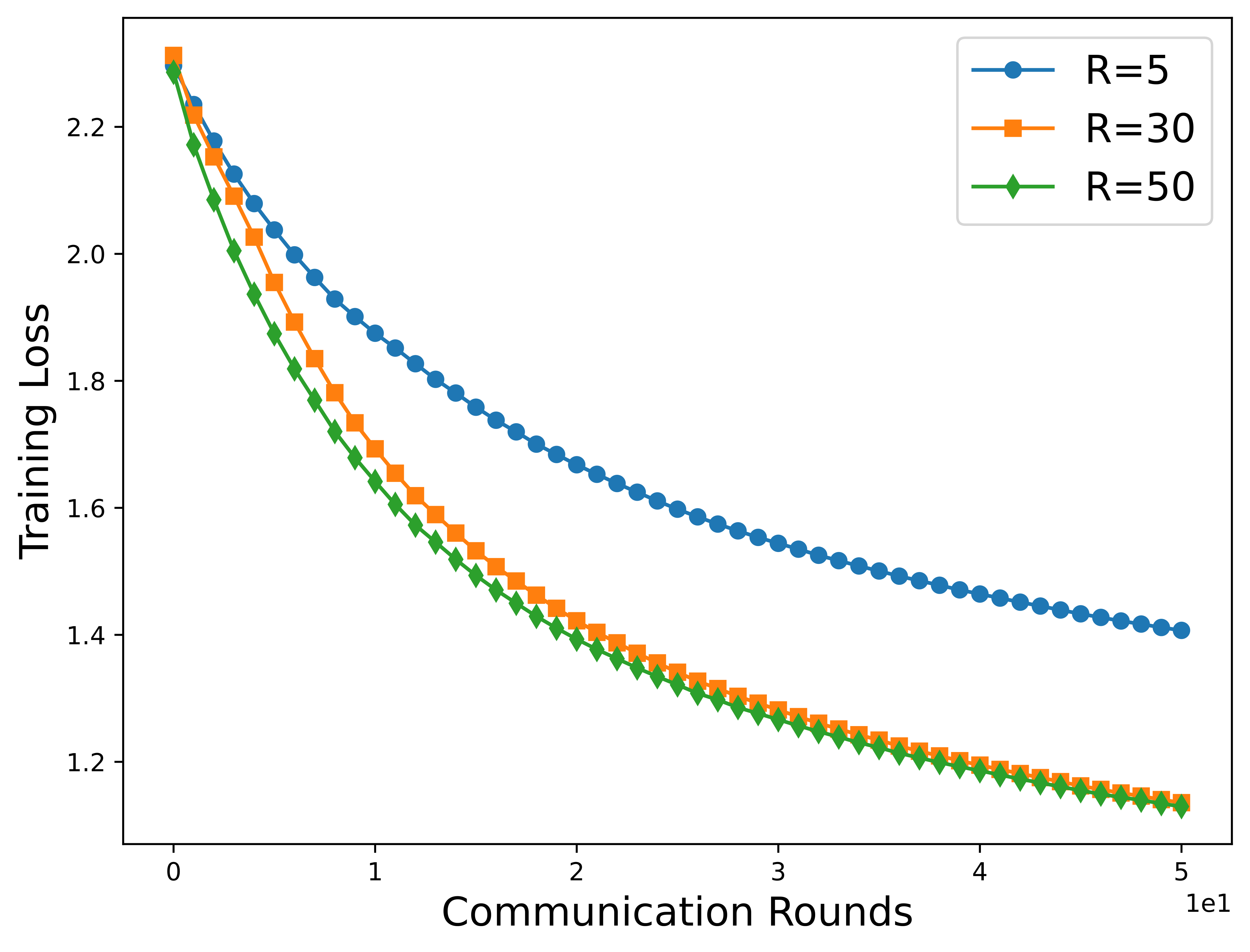}}
\subfigure[]{\includegraphics[width=0.4\textwidth]{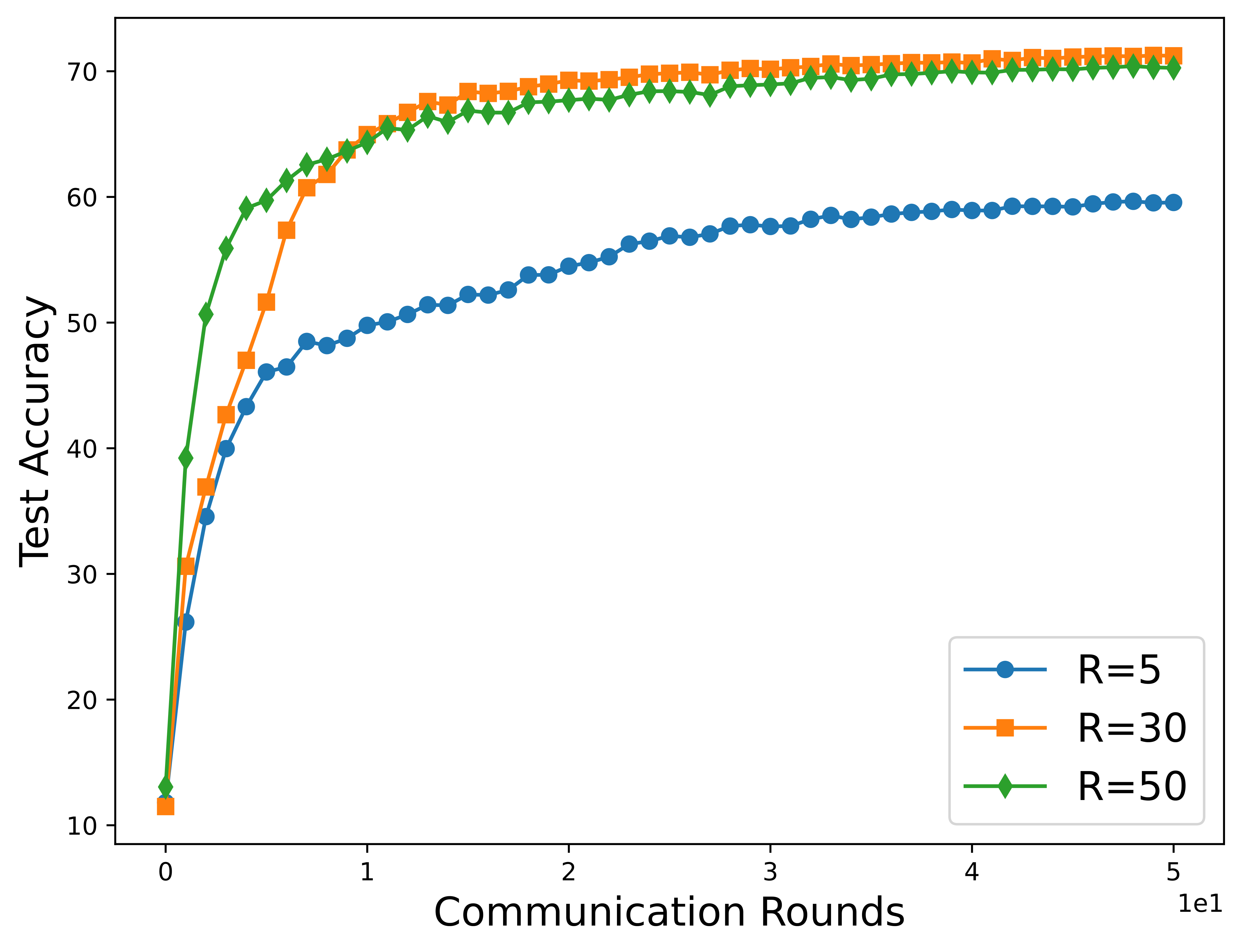}}
\caption{Impact of number of rank.}
\label{fig:lowrank-R}
\end{figure}

\section{Conclusion}
In the paper, we introduce a zeroth-order projection-based Riemannian algorithm for FL, overcoming the limitations of existing Riemannian FL frameworks that rely on exact gradient information. By leveraging the projection operator, our approach only requires simple Euclidean random perturbations, eliminating the need for sampling vectors in the tangent space when constructing the zeroth-order estimator. Additionally, the projection operator simplifies the FL framework by removing the need for sophisticated geometric operations, such as the inverse exponential map and parallel transport. Numerical experiments demonstrate the efficiency of our proposed estimator and support the convergence performance of the Riemannian FL algorithm. Furthermore, it would be interesting to explore how techniques in \citet{xiao2024dissolving} could be applied to eliminate the need for retraction or projection operator in zeroth-order Riemannian optimization.

% \section*{Impact Statement}
% This paper presents work whose goal is to advance the field of Machine Learning. There are many potential societal consequences of our work, none of which we feel must be specifically highlighted here.

%%%%%%%%%%%%%%%%%%%%%%%%%%%%%%%%%%%%%%%%%%%%%%%%%%%%%%%%%%%%%%%%%%%%%%%%%%%%%%%
%%%%%%%%%%%%%%%%%%%%%%%%%%%%%%%%%%%%%%%%%%%%%%%%%%%%%%%%%%%%%%%%%%%%%%%%%%%%%%%
% APPENDIX
%%%%%%%%%%%%%%%%%%%%%%%%%%%%%%%%%%%%%%%%%%%%%%%%%%%%%%%%%%%%%%%%%%%%%%%%%%%%%%%
%%%%%%%%%%%%%%%%%%%%%%%%%%%%%%%%%%%%%%%%%%%%%%%%%%%%%%%%%%%%%%%%%%%%%%%%%%%%%%%
\clearpage

\clearpage
\bibliographystyle{abbrvnat}
\bibliography{main}

% \appendix
\onecolumn
\appendix
\section*{Appendix}
\section{Technical Lemmas}
\begin{lemma}\label{lemma.cauchy schwarz}
    For any $x, y \in \mathbb{R}^{p \times r}$, and $\varepsilon > 0$, it holds that
    \begin{align*}
        \inner{x}{y} \le \frac{\varepsilon}{2}\|x\|^2 + \frac{1}{2\varepsilon}\|y\|^2.
    \end{align*}
\end{lemma}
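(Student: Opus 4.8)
The plan is to recognize this as Young's inequality for the Frobenius inner product and to derive it from the nonnegativity of a single perfect square, which is the most self-contained route. Since $\varepsilon > 0$, the scalars $\sqrt{\varepsilon}$ and $1/\sqrt{\varepsilon}$ are well-defined, so I would introduce the rescaled matrices $\sqrt{\varepsilon}\,x$ and $y/\sqrt{\varepsilon}$ in $\mathbb{R}^{p\times r}$ and start from the trivial fact that the squared Frobenius norm of any matrix is nonnegative:
\begin{equation}
    \left\|\sqrt{\varepsilon}\,x - \tfrac{1}{\sqrt{\varepsilon}}\,y\right\|^2 \ge 0. \notag
\end{equation}

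The key step is then to expand this square using bilinearity of the inner product and the identity $\|z\|^2 = \inner{z}{z}$. Expanding gives $\varepsilon\|x\|^2 - 2\inner{x}{y} + \frac{1}{\varepsilon}\|y\|^2 \ge 0$, where the cross term $-2\inner{\sqrt{\varepsilon}\,x}{y/\sqrt{\varepsilon}} = -2\inner{x}{y}$ uses that the scalar factors $\sqrt{\varepsilon}$ and $1/\sqrt{\varepsilon}$ cancel. Isolating the inner product and dividing by $2$ yields exactly $\inner{x}{y} \le \frac{\varepsilon}{2}\|x\|^2 + \frac{1}{2\varepsilon}\|y\|^2$, which is the claim. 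An equally short alternative would be to combine the Cauchy--Schwarz inequality $\inner{x}{y}\le\|x\|\,\|y\|$ with the AM--GM bound $\|x\|\,\|y\| = (\sqrt{\varepsilon}\|x\|)(\frac{1}{\sqrt{\varepsilon}}\|y\|)\le\frac12(\varepsilon\|x\|^2+\frac{1}{\varepsilon}\|y\|^2)$, but the perfect-square argument avoids invoking Cauchy--Schwarz as a separate fact.

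There is no genuine obstacle here: the statement is an elementary inequality valid in any inner product space, and the Frobenius structure on $\mathbb{R}^{p\times r}$ plays no special role beyond providing the inner product $\inner{\cdot}{\cdot}$ and its induced norm. The only points that warrant a line of care are confirming $\varepsilon>0$ makes the rescaling legitimate and checking that equality holds precisely when $\varepsilon\,x = y$, though the lemma as stated only requires the inequality.
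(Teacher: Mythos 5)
Your proof is correct: expanding $\bigl\|\sqrt{\varepsilon}\,x - y/\sqrt{\varepsilon}\bigr\|^2 \ge 0$ and rearranging yields exactly the claimed inequality, and every step (the rescaling legitimacy from $\varepsilon>0$, the cancellation in the cross term) is sound. The paper itself states this lemma without any proof, treating it as the standard Young/weighted Cauchy--Schwarz inequality, so your perfect-square argument is precisely the canonical justification the paper leaves implicit; there is nothing to compare beyond noting that your write-up supplies the missing (elementary) details.
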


\begin{lemma}\label{lemma.projection normal property}
   Suppose Assumption \ref{assum.proximal-smooth} holds. For any $x \in U_{\cM}(2\gamma)$, it holds that $x - \cP_\cM(x) \in \operatorname{N}_{\cP_\cM(x)}(\cM)$. 
\end{lemma}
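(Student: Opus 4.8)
The plan is to recognize that this lemma is simply the first-order optimality condition for the projection problem \eqref{eq.projection operator}, restated geometrically. Set $u \triangleq \cP_{\cM}(x)$ and introduce the smooth function $g(y) \triangleq \frac{1}{2}\|x-y\|^2$, whose Euclidean gradient is $\nabla g(y) = y - x$. The entire statement amounts to showing that the Euclidean gradient of $g$ at the minimizer $u$ lies in the normal space.

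First I would confirm that $u$ is a well-defined minimizer of $g$ over $\cM$: because $\cM$ is compact the minimum is attained, and because $x \in U_{\cM}(2\gamma)$ with $\cM$ being $2\gamma$-proximally smooth (Assumption \ref{assum.proximal-smooth}), the minimizer is unique and equals $\cP_{\cM}(x)$ by \eqref{eq.projection operator}.

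Next I would apply the Riemannian analogue of Fermat's theorem. Since $u$ minimizes the smooth function $g$ over the smooth manifold $\cM$, for any $s \in \T_u\cM$ one can take a smooth curve $\gamma$ on $\cM$ with $\gamma(0) = u$ and $\gamma'(0) = s$; then $t \mapsto g(\gamma(t))$ attains a minimum at $t=0$, so $\inner{\nabla g(u)}{s} = \frac{d}{dt}\big|_{t=0} g(\gamma(t)) = 0$. As $s \in \T_u\cM$ is arbitrary, $\nabla g(u) = u - x$ is orthogonal to $\T_u\cM$, i.e. $u - x \in \operatorname{N}_u\cM$. Since the normal space is a linear subspace, $x - u = x - \cP_{\cM}(x) \in \operatorname{N}_{\cP_{\cM}(x)}\cM$, which is exactly the claim.

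The only delicate point—and hence the step I would treat most carefully—is the stationarity argument over the manifold: one must justify that a minimizer of a smooth function constrained to $\cM$ forces the Euclidean gradient to be normal to the manifold, rather than merely invoking unconstrained optimality. This follows from the smooth structure of $\cM$ (existence of curves realizing every tangent direction) together with the uniqueness of the projection supplied by Assumption \ref{assum.proximal-smooth}; everything else is routine.
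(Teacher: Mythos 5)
Your proof is correct and follows essentially the same route as the paper's: both reduce the claim to the first-order optimality condition for the projection problem $\min_{y\in\cM}\frac{1}{2}\|y-x\|^2$, concluding that the Euclidean gradient $\cP_{\cM}(x)-x$ has vanishing tangential component at the minimizer. The only cosmetic difference is that the paper invokes the identity $\grad h(y)=\cP_{\T_y\cM}(\nabla h(y))$ for the Riemannian gradient directly, whereas you re-derive the stationarity condition from scratch via curves realizing tangent directions, which is a slightly more elementary packaging of the same argument.
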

\begin{proof}
    First notice that the projection is unique when $x \in U_{\cM}(2\gamma)$. From the definition of the projection operator, we have
    \begin{align*}
        0 \ = \ \ &\grad_y \frac{1}{2}\|y -x\|^2 \Big |_{y = \cP_{\cM}(x)} \\
        = \ \ &\cP_{\T_y\cM} \frac{1}{2} \nabla_y \|y - x\|^2 \Big |_{y = \cP_{\cM}(x)} \\
        = \ \ &\cP_{\T_{\cP_{\cM}(x)}\cM} \left(\cP_{\cM}(x) -(x) \right).
    \end{align*}
    Since the normal space $\operatorname{N}_{\cP_\cM(x)}(\cM)$ is orthogonal to the tangent space, and the proof is completed.
\end{proof}

\section{Missing Proofs in Section \ref{section.estimator}}
\paragraph{Proof of Lemma \ref{lemma.lispchitz continuity over Riemannian gradient}:}
\begin{proof}
    Notice that $\grad F(x,\xi)=\nabla F(x,\xi) - \cP_{\operatorname{N}_x\cM}(\nabla F(x,\xi))$, and it follows
    \begin{equation}
        \begin{split}
            &F(y,\xi)-F(x,\xi)-\inner{\grad F(x,\xi)}{y-x} \\
            = \ \ & F(y,\xi)-F(x,\xi)-\inner{\nabla F(x,\xi)}{y-x} + \inner{\cP_{\operatorname{N}_x\cM}(\nabla F(x,\xi))}{y-x}\\
            \le \ \  & \frac{l}{2}\|y-x\|^2+\frac{\|\cP_{\operatorname{N}_x\cM}(\nabla F(x,\xi))\|}{4\gamma}\|y-x\|^2\\
            \le \ \ & \left(\frac{l}{2}+\frac{\chi}{4\gamma}\right)\|y-x\|^2,
        \end{split}
        \nonumber
    \end{equation}
    where the first inequality comes from $l$-smooth property of $F(x,\xi)$ and \eqref{eq.normal}. Then we apply $\|\cP_{\operatorname{N}_x\cM}(\nabla F(x,\xi))\|\le \|\nabla F(x,\xi)\|\le \max_{x\in \cM, \xi\in \cD}\|\nabla F(x,\xi)\|\le \chi$ in the last inequality. Next, we investigate the second inequality. By using the fact that $\grad F(x,\xi)=\cP_{\operatorname{T}_x\cM}(\nabla F(x,\xi))$ and $\grad F(y,\xi)=\cP_{\operatorname{T}_y\cM}(\nabla F(y,\xi))$, we have
    \begin{align*}
        &\|\grad F(y,\xi)-\grad F(x,\xi)\|\\
            = \ \ &\|\cP_{T_y\cM}(\nabla F(y,\xi))-\cP_{\T_x\cM}(\nabla F(x,\xi))\|\\
            \le \ \ & \|\cP_{T_y\cM}(\nabla F(y,\xi))-\cP_{T_y\cM}(\nabla F(x,\xi))\|+\|\cP_{T_y\cM}(\nabla F(x,\xi))-\cP_{\T_x\cM}(\nabla F(x,\xi))\|.
    \end{align*}
    For the first term, it can be easily upper bounded as $\|\cP_{T_y\cM}(\nabla F(y,\xi))-\cP_{T_y\cM}(\nabla F(x,\xi))\| \le \|\nabla F(y,\xi) -\nabla F(x,\xi) \|$ due to the non-expansiveness of projection. Notice that $\cM$ is a smooth compact manifold and $\cP_{\T_x\cM}$ is Lipschitz continuous with respect to $x \in \cM$, then it follows
    \begin{align*}
       \|\cP_{T_y\cM}(\nabla F(x,\xi))-\cP_{\T_x\cM}(\nabla F(x,\xi))\| \le L_{\cP_{T\cM}}\|y-x\| \cdot \|\nabla F(x,\xi)\|, 
    \end{align*}
    where $L_{\cP_{T\cM}}$ is the associate Lipschitz constant. Consequently, we derive
    \begin{align*}
                   &\|\grad F(y,\xi)-\grad F(x,\xi)\|\\
            \le \ \ & \|\nabla F(y,\xi) -\nabla F(x,\xi) \|+\|\nabla F(x,\xi)\|L_{\cP_{T\cM}}\|y-x\|\\
            \le \ \ &l\|y-x\|+\chi L_{\cP_{T\cM}}\|y-x\|\\
            = \ \ &(l+\chi L_{\cP_{T\cM}})\|y-x\|, 
    \end{align*}
    where the second inequality holds due to the $l$-smooth property of $F(x,\xi)$ and $\max_{x\in \cM, \xi\in \cD}\|\nabla F(x,\xi)\|\le \chi$. Let $L= \max\left\{\frac{l}{2}+\frac{\chi}{4\gamma},l+\chi L_{\cP_{T\cM}}\right\}$, and the proof is completed.
\end{proof}
% \paragraph{Proof of Lemma \ref{lemma.uniform boundness}:}
% \begin{proof}
%     From Assumption \ref{assum.Lipschitz}, we have
%     \begin{align*}
%         &\left|F_i(\cP_{\cM}(x+\mu \cdot u_{i,j}), \xi_{i,j}) - F_i(x, \xi_{i,j})\right| \\
%         \le &\left|F_i(\cP_{\cM}(x+\mu \cdot u_{i,j}), \xi_{i,j}) - F_i(x, \xi_{i,j}) - \inner{\grad F_i(x,\xi_{i,j})}{\cP_{\cM}(x+\mu \cdot u_{i,j}) - x}\right| + \left|\inner{\grad F_i(x,\xi_{i,j})}{\cP_{\cM}(x+\mu \cdot u_{i,j}) - x}\right| \\
%         \le &\frac{L}{2}\|\cP_{\cM}(x+\mu \cdot u_{i,j}) - x\|^2 +  \|\grad F_i(x,\xi_{i,j})\| \cdot \|\cP_{\cM}(x+\mu \cdot u_{i,j}) - x\| \\
%         \le &\frac{1}{2}LM^2\mu^2 + 2\chi M\mu ,
%     \end{align*}
%     where the second inequality is from Assumption \ref{assum.Lipschitz} and the last inequality holds due to $\|u_{i,j}\| = 1$ and equation \eqref{eq.project Lips}. Therefore, it holds that
%     \begin{align*}
%        \|G^{\mathtt{P}}_{\mu}(x)\| \le \frac{pr}{m\mu} \sum_{j=1}^m \left\|(F_i(\cP_{\cM}(x+\mu \cdot u_{i,j}), \xi_{i,j}) - F_i(x, \xi_{i,j}))u_{i,j}\right\| \le \frac{1}{2}LM^2pr\mu + 2prM\chi. 
%     \end{align*}
% \end{proof}
\paragraph{Proof of Lemma \ref{lemma.uniform boundness}:}
\begin{proof}
   First notice that
    \begin{align*}
        &\left|F(\cP_{\cM}(x+\mu \cdot u_{j}), \xi_{j}) - F(x, \xi_{j})\right| \\
        \le \ \  &\left|F(\cP_{\cM}(x+\mu \cdot u_{j}), \xi_{j}) - F(x, \xi_{j}) - \inner{\grad F(x,\xi_{j})}{\cP_{\cM}(x+\mu \cdot u_{j}) - x}\right| + \left|\inner{\grad F(x,\xi_{j})}{\cP_{\cM}(x+\mu \cdot u_{j}) - x}\right| \\
        \le \ \ &\frac{L}{2}\|\cP_{\cM}(x+\mu \cdot u_{j}) - x\|^2 +  \|\grad F(x,\xi_{j})\| \cdot \|\cP_{\cM}(x+\mu \cdot u_{j}) - x\| \\
        \le \ \ &\frac{1}{2}LM^2\mu^2 + 2\chi M\mu ,
    \end{align*}
    where the second inequality comes from \eqref{eq.Lips inequality} and Cauchy inequality, and the last inequality holds due to the inequality \eqref{eq.project Lips} and Assumption \ref{assum.Lipschitz}. Consequently, we have
    \begin{align*}
       \|G^{\mathtt{P}}_{\mu}(x)\| \le \frac{pr}{m\mu} \sum_{j=1}^m \left\|(F(\cP_{\cM}(x+\mu \cdot u_{j}), \xi_{j}) - F(x, \xi_{j}))u_{j}\right\| \le \frac{1}{2}LM^2pr\mu + 2prM\chi,
    \end{align*}
    and the proof is completed.
\end{proof}
\paragraph{Proof of Lemma \ref{lemma.first-order moment}:}
\begin{proof}\label{proof.proof of le 2.6}
    Since $\grad F(x, \xi_j) \in \T_x\cM \subseteq \mathbb{R}^{p \times r}$, Because for any $v \in \R^{p\times r}$, then it holds that $\grad F(x,\xi_{j}) = pr \cdot \E_{u_{j}\sim \mathbb{S}} \left[\inner{\grad F(x,\xi_{j})}{u_{j}} u_{j}\right]$, $j=1,\ldots,m$.
    Also, Assumption \ref{assum.unbiased} implies
    \begin{align*}
       \grad f(x) = \frac{1}{m}\sum_{j=1}^m \E_{\xi_{j}\sim \cD} [\grad F(x,\xi_{j})] = \frac{pr}{m} \sum_{j=1}^m \E [\inner{\grad F(x,\xi_{j})}{u_{j}} u_{j}].
    \end{align*}
    Therefore, we have
    \begin{equation}\label{eq.first-order bound}
        \begin{split}
      &\|\E \left[G^{\mathtt{P}}_{\mu}(x)\right] - \grad f(x)\| \\
      = \ \ &\frac{pr}{\mu m} \cdot \bigg\|\sum_{j=1}^m \E \left[ \left(F(\cP_{\cM}(x + \mu u_{j}), \xi_{j}) - F(x, \xi_{j}) - \inner{\grad F(x, \xi_{j})}{\mu \cdot u_{j}} \right) u_{j}\right]\bigg\| \\
      = \ \ & \frac{pr}{\mu m} \cdot \bigg\|\sum_{j=1}^m \E [ (F(\cP_{\cM}(x + \mu u_{j}), \xi_{j}) - F(x, \xi_{j}) - \inner{\grad F(x, \xi_{j})}{\cP_{\cM}(x+\mu u_j)-x}\\
      &+\inner{\grad F(x, \xi_{j})}{\cP_{\cM}(x+\mu u_j)-(x+\mu u_j)}) u_{j}]\bigg\|\\
     \le & \underbrace{\frac{pr}{\mu m} \cdot \bigg\|\sum_{j=1}^m \E \left[ \left(F(\cP_{\cM}(x + \mu u_{j}), \xi_{j}) - F(x, \xi_{j}) - \inner{\grad F(x, \xi_{j})}{\cP_{\cM}(x+\mu u_j)-x} \right) u_{j}\right]\bigg\|}_{\rm (I)}\\
     &+ \underbrace{\frac{pr}{\mu m} \cdot \bigg\|\sum_{j=1}^m \E \left[ \left(\inner{\grad F(x, \xi_{j})}{\cP_{\cM}(x+\mu u_j)-(x+\mu u_j)}\right) u_{j}\right]\bigg\|}_{\rm (II)}.
     \end{split}
    \end{equation}
    To proceed, we separately bound the two terms above. For the term $\rm (I)$, applying Lipschitz-type inequality \eqref{eq.Lips inequality} gives
    \begin{equation}\label{eq.(I)}
        \begin{split}
            {\rm (I)}\ \le \ \ &\frac{pr}{\mu m} \sum_{j=1}^m \E \left[ \frac{ L}{2}\|\cP_{\cM}(x + \mu u_{j}) - x\|^2 \|u_{j}\|\right] \\
     \le \ \ &\frac{pr}{\mu m} \sum_{j=1}^m \E \left[ \frac{1}{2}LM^2\mu^2\| u_{j}\|^3\right] \\
     = \ \ &\frac{1}{2}LM^2pr\mu,
        \end{split}
    \end{equation}
    where the second inequality comes from \eqref{eq.project Lips}. For the second term $\rm(II)$, notice that $\cP_{\cM}(x+\mu u_j)-(x+\mu u_j) \in \operatorname{N}_{\cP_{\cM}(x+\mu u_j)}\cM$ due to Lemma \ref{lemma.projection normal property}, and then it follows
    \begin{equation}\label{eq.(II)}
        \begin{split}
            {\rm(II)}\ =\ \ &\frac{pr}{\mu m} \cdot \bigg\|\sum_{j=1}^m \E \left[ \left(\inner{\grad F(x, \xi_{j})-\grad F(\cP_{\cM}(x+\mu u_j), \xi_{j})}{\cP_{\cM}(x+\mu u_j)-(x+\mu u_j)}\right) u_{j}\right]\bigg\|\\
            \le \ \ &\frac{pr}{\mu m}\sum_{j=1}^m\E\left[L\|\grad F(x, \xi_{j})-\grad F(\cP_{\cM}(x+\mu u_j), \xi_{j})\| \cdot \|\cP_{\cM}(x+\mu u_j)-(x+\mu u_j)\| \cdot \|u_j\|\right]\\
            \le \ \ &\frac{pr}{\mu m}\sum_{j=1}^m\E\left[L\|\cP_{\cM}(x+\mu u_j)-x\| \cdot \|\cP_{\cM}(x+\mu u_j)-(x+\mu u_j)\| \cdot \|u_j\|\right]\\
            \le \ \ &\frac{pr}{\mu m}\sum_{j=1}^m\E\left[LM\mu \|x-(x+\mu u_j)\| \cdot \|u_j\|^2\right]\\
            = \ \ &\frac{LMpr\mu}{m}\sum_{j=1}^m\E[\|u_j\|^3]=LMpr\mu,
        \end{split}
    \end{equation}
    where the second inequality holds due to the Lipschitz property in Lemma \ref{lemma.lispchitz continuity over Riemannian gradient}. Then, from the definition of the projection operator, it follows that $\|\cP_{\cM}(x+\mu u_j)-(x+\mu u_j)\|\le \|x-(x+\mu u_j)\|$. Combining this with the projection property \eqref{eq.project Lips} establishes the third inequality. Finally, by substituting the bounds \eqref{eq.(I)} and \eqref{eq.(II)} into equation \eqref{eq.first-order bound}, and setting $\chi_f=\frac{1}{2}LM^2pr+LMpr$, we complete the proof.
\end{proof}
\paragraph{Proof of Lemma \ref{lemma.second-order moment}:}
\begin{proof}
    First notice that
    \begin{align*}
        &\E \left[\left\|G^{\mathtt{P}}_{\mu}(x) - \grad f(x)\right\|^2\right] \\
        \le \ \ &2 \cdot \E \left[\left\|\grad f(x) - \frac{1}{m}\sum_{j=1}^m \grad F(x, \xi_{j})\right\|^2\right] + 2 \cdot \E \left[\left\|G^{\mathtt{P}}_{\mu}(x) - \frac{1}{m}\sum_{j=1}^m\grad F(x, \xi_{j})\right\|^2\right].
    \end{align*}
    For the first term, it can be bounded as
    \begin{align*}
       \E \left[\left\|\grad f(x) - \frac{1}{m}\sum_{j=1}^m \grad F(x, \xi_{j})\right\|^2\right] \le \frac{\sigma^2}{m}
    \end{align*}
    due to Assumption \ref{assum.unbiased} and the batching property. For the second term, we first inspect the expectation for $U=\left\{u_{1}, \ldots, u_{m}\right\}$:
    \begin{align*}
        &\E_{U} \left[\left\|G^{\mathtt{P}}_{\mu}(x) - \frac{1}{m}\sum_{j=1}^m\grad F(x, \xi_{j})\right\|^2\right] \\
        = \ \ &\frac{1}{m^2} \cdot \E_{U} \left[\left\|\sum_{j=1}^m \frac{pr}{\mu}\left(F(\cP_{\cM}(x + \mu u_{j}), \xi_{j}) - F(x, \xi_{j})\right)u_{j} - \grad F(x, \xi_{j})\right\|^2\right].
    \end{align*}
    For brevity, we introduce the notation
    \begin{equation*}
        g_{j} :=  \frac{pr}{\mu}\left(F(\cP_{\cM}(x + \mu u_{j}), \xi_{j}) - F(x, \xi_{j})\right)u_{j} - \grad F(x, \xi_{j}),
    \end{equation*}
    and then proceed as follows:
    \begin{align*}
        &\E_{U} \left[\left\|G^{\mathtt{P}}_{\mu}(x) - \frac{1}{m}\sum_{j=1}^m\grad F(x, \xi_{j})\right\|^2\right] \\
        = \ \ & \frac{1}{m^2} \cdot \E_{U} \left[\sum_{j=1}^m\left\|g_{j}\right\|^2\right] + \frac{1}{m^2} \cdot \E_{U} \left[\sum_{1 \le j,l \le m, j \neq l}\inner{g_{j}}{g_{l}}\right] \\
        = \ \ & \frac{1}{m} \cdot \E_{u_{1}} \left[\left\|g_{1}\right\|^2\right] + \frac{1}{m^2} \cdot \sum_{1 \le j,l \le m, j \neq l}\inner{\E_{u_{j}} [g_{j}]}{\E_{u_{l}} [g_{l}]},
    \end{align*}
    where the last inequality follows from that $u_{1},\ldots,u_{m}$ are i.i.d.. Notice that
    \begin{align*}
        \E_{u_{1}} \left[\left\|g_{1}\right\|^2\right] \le \frac{2p^2r^2}{\mu^2} \cdot \E_{u_{1}} \left[\left(F(\cP_{\cM}(x+\mu u_{1}), \xi_{1}) - F(x, \xi_{1})\right)^2\|u_{1}\|^2\right] + 2 \cdot \|\grad F(x,\xi_{1})\|^2,
    \end{align*}
    and 
    \begin{align*}
        &\left(F(\cP_{\cM}(x+\mu u_{1}), \xi_{1}) - F(x, \xi_{1})\right)^2 \\
        = \ \ &\left(F(\cP_{\cM}(x+\mu u_{1}), \xi_{1}) - F(x, \xi_{1}) - \inner{\grad F(x,\xi_{1})}{\cP_{\cM}(x+\mu u_{1}) - x} + \inner{\grad F(x,\xi_{1})}{\cP_{\cM}(x+\mu u_{1}) - x}\right)^2 \\
        \le \ \ &2 \cdot \left(F(\cP_{\cM}(x+\mu u_{1}), \xi_{1}) - F(x, \xi_{1}) - \inner{\grad F(x,\xi_{1})}{\cP_{\cM}(x+\mu u_{1}) - x}\right)^2 + 2\inner{\grad F(x,\xi_{1})}{\cP_{\cM}(x+\mu u_{1}) - x}^2\\
        \le \ \ &2 \cdot \left(\frac{1}{2}LM^2\mu^2\|u_{1}\|^2\right)^2 + 2(M\|\mu u_{1}\|)^2\|\grad F(x,\xi_{1})\|^2 \\
        = \ \ &\frac{1}{2}L^2M^4\mu^4 + 2M^2\mu^2\|\grad F(x,\xi_{1})\|^2.
    \end{align*}
    We use Lemma \ref{lemma.lispchitz continuity over Riemannian gradient} and projection property \eqref{eq.project Lips} in the above second inequality. Consequently, we obtain
    \begin{align*}
      \E_{u_{1}} \left[\left\|g_{1}\right\|^2\right] \le  L^2M^4p^2r^2\mu^2 + (4M^2p^2r^2+2)\|\grad F(x,\xi_{1})\|^2.
    \end{align*}
    Moreover, since $\grad F(x,\xi_{j}) = pr \cdot \E_{u_{j}} \left[\inner{\grad F(x,\xi_{j})}{u_{j}} u_{j}\right]$ holds for any $j$, and it follows
    \begin{align*}
    &\E_{u_{j}} [g_{j}] \\
    = \ \ &\E_{u_{j}} \left[\frac{pr}{\mu} (F(\cP_{\cM}(x + \mu u_{j}), \xi_{j}) - F(x, \xi_{j})) u_{j}\right] - \grad F(x,\xi_{j}) \\
    = \ \ &\frac{pr}{\mu} \cdot \E_{u_{j}} \left[\left(F(\cP_{\cM}(x + \mu u_{j}), \xi_{j}) - F(x, \xi_{j}) - \inner{\grad F(x,\xi_{j})}{\mu \cdot u_{j}}\right)u_{j}\right].   
    \end{align*}
    Applying a similar argument in \eqref{eq.first-order bound} gives $\left\|\E_{u_{j}} [g_{j}]\right\| \le (\frac{1}{2}LM^2pr+LMpr)\mu$, and hence we derive
    \begin{align*}
        \sum_{1 \le j,l \le m, j \neq l}\inner{\E_{u_{j}} [g_{j}]}{\E_{u_{l}} [g_{l}]} \le m(m-1) \cdot (\frac{1}{2}LM^2pr+LMpr)^2\mu^2.
    \end{align*}
    To this end, we have
    \begin{align*}
        &\E_{U} \left[\left\|G^{\mathtt{P}}_{\mu}(x) - \frac{1}{m}\sum_{j=1}^m\grad F(x, \xi_{j})\right\|^2\right] \\
        \le \ \ &\frac{m-1}{m}(\frac{1}{2}LM^2pr+LMpr)^2\mu^2 + \frac{1}{m}L^2M^4p^2r^2\mu^2 + \frac{4M^2p^2r^2+2}{m}\|\grad F(x,\xi_{1})\|^2.
    \end{align*}
    Now take expectation for $\Xi=\left\{\xi_{1}, \ldots, \xi_{m}\right\}$. Assumption \ref{assum.Lipschitz} implies
    \begin{align*}
       &\E \left[\left\|G^{\mathtt{P}}_{\mu}(x) - \frac{1}{m}\sum_{j=1}^m\grad F(x, \xi_{j})\right\|^2\right] \\
       \le  \ \ &\frac{m-1}{m}(\frac{1}{2}LM^2pr+LMpr)^2\mu^2 + \frac{1}{m}L^2M^4p^2r^2\mu^2 + \frac{4M^2p^2r^2+2}{m}\chi^2.
    \end{align*}
    By putting all things together, we conclude
    \begin{align*}
        &\E \left[\left\|G^{\mathtt{P}}_{\mu}(x) - \grad f(x)\right\|^2\right]\\
        \le& \frac{2m-2}{m}(\frac{1}{2}LM^2pr+LMpr)^2\mu^2 +\frac{2L^2M^4p^2r^2\mu^2+(8M^2p^2r^2+4)\chi^2+2\sigma^2}{m}.
    \end{align*}
    Let $\chi_1=\frac{2m-2}{m}(\frac{1}{2}LM^2pr+LMpr)^2$, $\chi_2=2L^2M^4p^2r^2\mu^2+(8M^2p^2r^2+4)\chi^2+2\sigma^2$, the result is proved.
\end{proof}

\section{Missing Proofs in Section \ref{section.algorithm}}
\paragraph{Proof of Lemma \ref{lemma.special}:}
\begin{proof}
    Denote $h(y)=\frac{1}{2\eta}\|y-(x-\eta v)\|^2$, and it is easy to see that $h(y)$ is a $\frac{1}{\eta}$-strongly convex function. For any $z\in \cM$, it implies 
    \begin{equation}\label{eq.1/eta convex}
        \frac{1}{2\eta}\|z-(x-\eta v)\|^2\ge \frac{1}{2\eta}\|x^+-(x-\eta v)\|^2 + \inner{\frac{1}{\eta}\left(x^+-(x-\eta v)\right)}{z-x^+} + \frac{1}{2\eta}\|z-x^+\|^2.
    \end{equation}
    For the inner product term in the above inequality, since $x^+$ is the solution of $\min_{y\in \cM}h(y)$ and $\nabla h(y) = (y - (x - \eta v)) / \eta$, we have
    \begin{equation}
        \begin{split}
            \inner{\frac{1}{\eta}\left(x^+-(x-\eta v)\right)}{z-x^+} \ = \ \ & \inner{\frac{1}{\eta}\left(x^+-(x-\eta v)\right)-\grad h(x^+)}{z-x^+}\\
            \ge \ \ & -\frac{\|\frac{1}{\eta}\left(x^+-(x-\eta v)\right)-\grad h(x^+)\|}{4\gamma}\|z-x^+\|^2\\
            \ge \ \ & -\frac{\|\frac{1}{\eta}\left(x^+-(x-\eta v)\right)\|}{4\gamma}\|z-x^+\|^2\\
            \ge \ \ & -\frac{3\|v\|}{4\gamma}\|z-x^+\|^2.
        \end{split}\notag
    \end{equation}
    The first inequality comes from property \eqref{eq.normal}, and the second inequality is due to $\|\nabla h(y)-\grad h(y)\|\le \|\nabla h(y)\|$. Notice that $x-\eta v\in \overline{U}_{\cM}(\gamma)$, and we may apply property \eqref{eq.lips=2} to get $\|x^+-(x-\eta v)\|=\|\proj{x-\eta v}-(x-\eta v)\|\le \|\proj{x-\eta v}-x\| +\eta\|v\| = \|\proj{x-\eta v}-\proj{x}\| +\eta\|v\| \le 3\eta\|v\|$. Then the last inequality in above equation is proved. Combining the above inequality with \eqref{eq.1/eta convex} leads to
    $$
        \frac{1}{2\eta}\|z-(x-\eta v)\|^2\ge \frac{1}{2\eta}\|x^+-(x-\eta v)\|^2 + \left(\frac{1}{2\eta}-\frac{3\|v\|}{4\gamma}\right)\|z-x^+\|^2
    $$
    By expanding the quadratic terms $\|z-(x-\eta v)\|^2$ and $\|x^+-(x-\eta v)\|^2$ and rearranging the above inequality, it follows
    \begin{equation}\label{eq.bound inner v and z-x+}
        \inner{v}{z-x^+}\ge \frac{1}{2\eta}(\|x^+-x\|^2-\|z-x\|^2)+\left(\frac{1}{2\eta}-\frac{3\|v\|}{4\gamma}\right)\|z-x^+\|^2.
    \end{equation}
    Now using Lipschitz-type inequality \eqref{eq.Lips inequality} implies
    \begin{align*}
        f(x^+) \ \le \ \ &f(x)+\inner{\grad f(x)}{x^+-x}+\frac{L}{2}\|x^+-x\|^2\\
        \le \ \  & f(z)+\inner{\grad f(x)}{x-z} + \inner{\grad f(x)}{x^+-z} + \frac{L}{2}\|z-x\|^2+\frac{L}{2}\|x^+-x\|^2,
        \\
        = \ \  & f(z)+\inner{\grad f(x)}{x^+-z}+\frac{L}{2}\|z-x\|^2+\frac{L}{2}\|x^+-x\|^2,
    \end{align*}
   It remains to substitute equation \eqref{eq.bound inner v and z-x+} and the proof is completed.
\end{proof}

Before delving into the proof of Theorem \ref{theorem.convergence}, we introduce some notations to make the analysis more clarity. For matrices $z_1,\cdots,z_n\in\R^{p\times r}$, we define $\bz \triangleq \col{z_i}_{i = 1}^n = [z_1;\cdots;z_n]\in\R^{np\times r}$ as the vertical stack of all the matrices and further, we let $\mathbf{x}\triangleq\col{x}_{i=1}^n=[x;\cdots;x]\in \R^{np\times r}$. The superscript $k$ is the index of the communication round, and $t$ is the index of the local update. The subscript $i$ denotes the $i$-th clients. Given local variable $z_i^{k,t}$, we denote the stack of gradient estimator of all clients as $\bG^{k,t} \triangleq \col{G_{i}^{k,t}}_{i=1}^n$, and the stack of average gradient estimator $\overline{\bG}^{k,t}\triangleq \col{\frac{1}{n}\sum_{i=1}^nG_{i}^{k,t}}_{i=1}^n$. Similarly, we define the stack of Riemannian gradient $\grad \f(\bz^{k,t}) \triangleq \col{\grad f_i(z_i^{k,t})}_{i=1}^n$ and the stack of average Riemannian gradient $\overline{\grad \f(\bz^{k,t})} \triangleq \col{\frac{1}{n}\sum_{i=1}^n\grad f_i(z_{i}^{k,t})}_{i=1}^n$. Moreover, if we have $\bz = \col{z_i}_{i=1}^n$, then $\cP_{\cM}(\bz) = \col{\cP_{\cM}(z_i)}_{i=1}^n$. We introduce the following potential function $\Omega^k$, defined as:
\begin{equation}\label{eq.omega^k}
    \Omega^k \triangleq f(\proj{x^k})-f^*+\frac{1}{n\tilde{\eta}}\|\blambda^k - \overline{\blambda}^k\|^2,
\end{equation}
where $f^*$ is the optimal value of problem (\ref{eq.main}) and
\begin{subequations}
    \begin{align}
       \blambda^k &\triangleq \eta\left( \tau\grad\f(\proj{\bx^k}) + \sum_{t = 0}^{\tau-1}\overline{\bG}^{k-1,t}-\sum_{t = 0}^{\tau-1}\bG^{k-1,t}\right), \label{eq.Lambda^k} \\
        \overline{\blambda}^k &\triangleq \col{\frac{1}{n}\sum_{i=1}^{n}\Lambda_i^k}_{i=1}^n. \label{eq.ave of Lambda^k}
    \end{align}
\end{subequations}
All the notations is summarized in the Table \ref{table.notations}. Finally, let us recall the optimality measure $\cG_{\tilde{\eta}}(\proj{x^k}) \triangleq (\proj{x^k} - \tilde{x}^{k+1})/\tilde{\eta}$, where 
\begin{equation}\label{eq.PRGD}
    \tilde{x}^{k+1}\triangleq\proj{\proj{x^k}-\tilde{\eta}\grad f(\proj{x^k})}.
\end{equation} 
\begin{table}[H]
    \centering
    \resizebox{\textwidth}{!}{%
    \begin{tabular}{|c|c|c|c|}
        \hline
        Notation & Specific Formulation & Specific Meaning & Dimension\\
        \hline
        $\bz$ & $[z_1;\cdots;z_n]$ & Vertical stack of all $z_i$ & $\R^{np\times r}$ \\
        \hline
        $\bx$ & $[x;\cdots;x]$ & Vertical stack of $x$ & $\R^{np\times r}$ \\
        \hline
        $\bG^{k,t}$ & $[G^{k,t}_i;\cdots;G^{k,t}_n]$ & Vertical stack of all $G^{k,t}_i$ & $\R^{np\times r}$\\
        \hline
        $\overline{\bG}^{k,t}$ & $[\frac{1}{n}\sum_{i=1}^nG_{i}^{k,t};\cdots;\frac{1}{n}\sum_{i=1}^nG_{i}^{k,t}]$ & Vertical stack of $\frac{1}{n}\sum_{i=1}^nG_{i}^{k,t}$ & $\R^{np\times r}$\\
        \hline
        $\grad \f(\bz^{k,t})$ & $[\grad f_i(z_1^{k,t});\cdots;\grad f_i(z_n^{k,t})]$ & Vertical stack of all $\grad f_i(z_i^{k,t})$ & $\R^{np\times r}$\\
        \hline
        $\overline{\grad \f(\bz^{k,t})}$ & $[\frac{1}{n}\sum_{i=1}^n\grad f_i(z_{1}^{k,t});\cdots;\frac{1}{n}\sum_{i=1}^n\grad f_i(z_{n}^{k,t})]$ & Vertical stack of $\frac{1}{n}\sum_{i=1}^n\grad f_i(z_{i}^{k,t})$ & $\R^{np\times r}$\\
        \hline
        $\cP_{\cM}(\bz)$ & $[\cP_{\cM}(z_1);\cdots;\cP_{\cM}(z_n)]$ & Vertical stack of all $\cP_{\cM}(z_i)$ & $\R^{np\times r}$\\
        \hline
        $\blambda^k$ & \eqref{eq.Lambda^k} & Auxiliary function & $\R^{np\times r}$\\
        \hline
        $\overline{\blambda}^k$ & \eqref{eq.ave of Lambda^k} & Auxiliary function & $\R^{np\times r}$\\
        \hline
        $\Omega^k$ & 
        \eqref{eq.omega^k} & Potential function & $\R^{np\times r}$\\
        \hline
    \end{tabular}
    }
    \caption{Notations for analysis of Algorithm \ref{alg.federated}}
    \label{table.notations}
\end{table}
 \begin{lemma}\label{lemma.equivalent update}
        The update of Algorithm \ref{alg.federated} is equivalent to
        \begin{equation}
            \begin{cases}
                \hat{\bz}^{k,t+1} = \hat{\bz}^{k,t} -\eta\left(\bG^{k,t} + \frac{1}{\tau}\sum_{t=1}^{\tau-1}\overline{\bG}^{k-1,t}-\frac{1}{\tau}\sum_{t=1}^{\tau-1}\bG^{k-1,t}\right),\\
                \bz^{k,t+1} = \proj{\hat{\bz}^{k,t+1}},\\
                \bx^{k+1} = \proj{\bx^k}-\eta_g\eta\sum_{t=0}^{\tau-1}\overline{\bG}^{k,t}.
            \end{cases}
        \end{equation}
        Moreover, if we let $G_{i}^{0,t}=0$, for all $t = 0,\cdots,\tau-1$ and $i=1,\cdots,n$, we have $c_i^k = \frac{1}{\tau}\sum_{t=0}^{\tau -1}\frac{1}{n}\sum_{i=1}^nG_{i}^{k-1,t}-\frac{1}{\tau}\sum_{t=0}^{\tau -1} G_{i}^{k-1,t}$.
    \end{lemma}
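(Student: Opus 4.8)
The plan is to prove both assertions simultaneously by induction on the communication round $k$, treating the explicit formula for $c_i^k$ as the primary target while carrying the auxiliary identity $\tfrac1n\sum_{i=1}^n c_i^k=0$ alongside it; the equivalent stacked update will then follow by direct substitution and stacking. The key structural observation I would exploit first is that, within a fixed round $k$, the correction term $c_i^k$ is constant in the local index $t$. Hence unrolling the local recursion $\hat z_i^{k,t+1}=\hat z_i^{k,t}-\eta(G_i^{k,t}+c_i^k)$ from the initialization $\hat z_i^{k,0}=\proj{x^k}$ telescopes to $\hat z_i^{k,\tau}=\proj{x^k}-\eta\sum_{t=0}^{\tau-1}G_i^{k,t}-\eta\tau\,c_i^k$. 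Averaging over the $n$ clients and substituting into the server step yields the identity $\proj{x^k}-x^{k+1}=\eta_g\eta\sum_{t=0}^{\tau-1}\tfrac1n\sum_{i=1}^nG_i^{k,t}+\eta_g\eta\tau\,\tfrac1n\sum_{i=1}^n c_i^k$.

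Next I would feed this identity into the definition $c_i^{k+1}=\tfrac{1}{\eta_g\eta\tau}(\proj{x^k}-x^{k+1})-\tfrac1\tau\sum_{t=0}^{\tau-1}G_i^{k,t}$. Cancelling the common prefactor $\eta_g\eta\tau$ produces exactly the claimed expression $\tfrac1\tau\sum_{t=0}^{\tau-1}\tfrac1n\sum_{i=1}^nG_i^{k,t}-\tfrac1\tau\sum_{t=0}^{\tau-1}G_i^{k,t}$, \emph{plus} a single residual term $\tfrac1n\sum_{i=1}^n c_i^k$. This is precisely where the inductive hypothesis enters: the residual vanishes by assumption, leaving the desired formula for $c_i^{k+1}$. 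Summing that formula over $i$ makes the two client-averaged sums coincide, so $\tfrac1n\sum_{i=1}^n c_i^{k+1}=0$ as well, which closes the induction. The base case is supplied by the stated convention $G_i^{0,t}=0$ together with $c_i^1=0$: both sides of the formula are then zero at $k=1$, and $\tfrac1n\sum_i c_i^1=0$ holds trivially.

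Finally, to obtain the equivalent update I would substitute the established formula for $c_i^k$ into the local recursion and stack the $n$ client equations, which immediately gives the first line, where $G_i^{k,t}$ stacks into $\bG^{k,t}$ and the correction terms stack into $\tfrac1\tau\sum_{t}\overline{\bG}^{k-1,t}-\tfrac1\tau\sum_{t}\bG^{k-1,t}$; the projection line $\bz^{k,t+1}=\proj{\hat{\bz}^{k,t+1}}$ is merely the stacked form of the per-client projection. For the server line I would reuse the averaged telescoping identity: because $\tfrac1n\sum_i c_i^k=0$, the quantity $\tfrac1n\sum_{i=1}^n\hat z_i^{k,\tau}-\proj{x^k}$ collapses to $-\eta\sum_{t=0}^{\tau-1}\overline{\bG}^{k,t}$ componentwise, producing $\bx^{k+1}=\proj{\bx^k}-\eta_g\eta\sum_{t=0}^{\tau-1}\overline{\bG}^{k,t}$ after stacking. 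The main obstacle I anticipate is the bookkeeping around the residual $\tfrac1n\sum_i c_i^k$: the explicit formula for $c_i^k$ and the mean-zero property are genuinely coupled, so neither can be established in isolation, and the argument goes through only because the two are propagated together through the single induction.
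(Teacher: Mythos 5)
Your proof is correct. Note that the paper does not actually write out a proof of this lemma: its ``proof'' consists of the single remark that the result ``can be proved by a similar argument in \cite{zhang2024composite}'', with all details omitted. Your induction supplies exactly those omitted details, and it is the natural argument for SCAFFOLD-type correction terms: unrolling the local recursion gives $\hat z_i^{k,\tau}=\proj{x^k}-\eta\sum_{t=0}^{\tau-1}G_i^{k,t}-\eta\tau c_i^k$; substituting into the server step expresses $\proj{x^k}-x^{k+1}$ in terms of the averaged estimators plus the residual $\tfrac1n\sum_i c_i^k$; and feeding this into the definition of $c_i^{k+1}$ yields the claimed formula \emph{provided} that residual vanishes, which is restored at the next round by averaging the formula over $i$. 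Your observation that the explicit formula and the mean-zero identity are coupled and must be carried jointly through a single induction is precisely the crux of the argument, and your base case ($c_i^1=0$ together with the convention $G_i^{0,t}=0$) is handled correctly; the stacked form and the server line then follow by direct substitution exactly as you describe. One cosmetic remark: the lemma as printed writes the correction sums in the stacked update as $\sum_{t=1}^{\tau-1}$ and reuses $t$ as both the outer index and the dummy summation index; this is a typo for $\sum_{t=0}^{\tau-1}$ over a fresh index, consistent with the stated formula for $c_i^k$, and your proof implicitly (and correctly) uses that range.
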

    \begin{proof}
        This lemma can be proved by a similar argument in \cite{zhang2024composite}, and we omit it here.
    \end{proof}
    
    \begin{lemma}
         Suppose Assumptions \ref{assum.proximal-smooth}, \ref{assum.unbiased} and \ref{assum.Lipschitz} hold. If the step size $\tilde{\eta}\triangleq \eta_g\eta\tau\le \min\left\{\frac{\eta_g\gamma}{\chi},\frac{\eta_g\gamma}{3\chi_G},\frac{\eta_g}{16L}\right\}$, then we have
         \begin{equation}\label{eq.second term bound}
            \begin{split}
                \frac{1}{n}\E\|\blambda^{k+1}-\overline{\blambda}^{k+1}\|
                \ \le \ \ & 2\eta^2L^2\tau^2\E\|\proj{\bx^{k+1}}-\proj{\bx^{k}}\|^2\\
                 &+\frac{4\eta^2L^2\tau}{n}\Bigg(3nM^2\tau^3\eta^2\|\grad f(\proj{x^k})\|^2+9\tau\E\left\|\blambda_i^k-\overline{\blambda}^k\right\|^2\\
                &+ 18n\eta^2\tau^2(\chi_1\mu^2+\chi_2/m)+18n\eta^2\tau^2(\tau-1)\left(\chi_G + \chi\right)\chi_f\mu\Bigg)\\
                &+4\eta^2\tau(\chi_1\mu^2+\chi_2/m)+4\eta^2\tau(\tau-1)\left(\chi_G + \chi\right)\chi_f\mu.
            \end{split}
         \end{equation}
    \end{lemma}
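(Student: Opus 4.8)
The plan is to expand the consensus error $\blambda^{k+1}-\overline{\blambda}^{k+1}$ directly from its definition \eqref{eq.Lambda^k} and then isolate three sources of error: the change of the global iterate between two rounds, the local client drift accumulated during round $k$, and the bias and variance of the zeroth-order estimator. First I would remove the block-average: since $\sum_{t}\overline{\bG}^{k,t}$ is already in consensus form it cancels, leaving
\[
    \blambda^{k+1}-\overline{\blambda}^{k+1} = \eta\tau\bigl(\grad\f(\proj{\bx^{k+1}})-\overline{\grad \f(\proj{\bx^{k+1}})}\bigr) - \eta\sum_{t=0}^{\tau-1}\bigl(\bG^{k,t}-\overline{\bG}^{k,t}\bigr).
\]
I would then split each estimator as $G_i^{k,t}=\grad f_i(z_i^{k,t})+e_i^{k,t}$ with $e_i^{k,t}\triangleq G_i^{k,t}-\grad f_i(z_i^{k,t})$, and add and subtract the gradients at $\proj{x^k}$ inside the second sum. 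The resulting heterogeneity-at-$\proj{x^k}$ piece $-\eta\tau(\grad f_i(\proj{x^k})-\grad f(\proj{x^k}))$ combines with the leading term at $\proj{x^{k+1}}$ into a \emph{difference} of heterogeneities between the two rounds, which by the Lipschitz inequality \eqref{eq.grad Lips} is controlled by $\|\proj{\bx^{k+1}}-\proj{\bx^{k}}\|$. After a Young's-inequality split this yields the first term $2\eta^2L^2\tau^2\E\|\proj{\bx^{k+1}}-\proj{\bx^{k}}\|^2$.

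The second step bounds the remaining Lipschitz drift terms $\eta\sum_{t}(\grad f_i(z_i^{k,t})-\grad f_i(\proj{x^k}))$, which by \eqref{eq.grad Lips} reduce to controlling the local drift $\|z_i^{k,t}-\proj{x^k}\|$. Writing $z_i^{k,t}=\proj{\hat z_i^{k,t}}$ with $\hat z_i^{k,t}=\proj{x^k}-\eta(\sum_{s=0}^{t-1}G_i^{k,s}+t\,c_i^k)$ and applying the projection growth bound \eqref{eq.project Lips} with base point $\proj{x^k}\in\cM$, I would use the identity $\Lambda_i^k-\overline{\Lambda}^k=\eta\tau(\grad f_i(\proj{x^k})-\grad f(\proj{x^k}))+\eta\tau c_i^k$, where $\overline{\Lambda}^k\triangleq\tfrac1n\sum_{l}\Lambda_l^k$; this follows from the formula $c_i^k=\tfrac1\tau\sum_{t}(\overline{G}^{k-1,t}-G_i^{k-1,t})$ in Lemma \ref{lemma.equivalent update}. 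Substituting exposes exactly four contributions inside the drift: a recursive Lipschitz term $\eta\sum_{s}(\grad f_i(z_i^{k,s})-\grad f_i(\proj{x^k}))$, the accumulated noise $\eta\sum_{s}e_i^{k,s}$, the correction consensus error $\tfrac{t}{\tau}(\Lambda_i^k-\overline{\Lambda}^k)$, and the global-gradient term $\eta t\,\grad f(\proj{x^k})$. Unrolling the recursive term through a discrete Gronwall argument — which closes because the step-size condition $\tilde\eta\le\eta_g/(16L)$ forces $\eta L\tau\le\tfrac1{16}$ — bounds the drift by the last three contributions, and after stacking over $i$ and summing over $t$ these generate the $3nM^2\tau^3\eta^2\|\grad f(\proj{x^k})\|^2$ and $9\tau\E\|\blambda^k-\overline{\blambda}^k\|^2$ terms.

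The third step handles the noise. Taking expectations and conditioning on the natural filtration up to local step $t$, I would invoke the bias bound of Lemma \ref{lemma.first-order moment} (the conditional mean of $e_i^{k,t}$ has norm at most $\chi_f\mu$), the second-moment bound of Lemma \ref{lemma.second-order moment} ($\E\|e_i^{k,t}\|^2\le\chi_1\mu^2+\chi_2/m$), and the uniform bounds of Lemma \ref{lemma.uniform boundness} ($\|G_i^{k,t}\|\le\chi_G$, together with $\|\grad f_i\|\le\chi$). Expanding $\|\sum_{t}e_i^{k,t}\|^2$ into diagonal and off-diagonal parts, the diagonal terms contribute the variance $\chi_1\mu^2+\chi_2/m$ scaled by $\tau$, while the $\tfrac{\tau(\tau-1)}{2}$ cross terms — each split so that the $\chi_f\mu$ bias multiplies a uniformly bounded factor rather than another $\mu$-dependent quantity — contribute the $(\tau-1)(\chi_G+\chi)\chi_f\mu$ terms. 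Assembling the three steps with repeated applications of Lemma \ref{lemma.cauchy schwarz} and simplifying the constants under the stated step-size restriction gives \eqref{eq.second term bound}.

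The main obstacle I anticipate is the second step: because the estimators $G_i^{k,s}$ are evaluated at iterates $z_i^{k,s}$ that themselves depend on earlier estimators, the local-drift estimate is genuinely recursive, and one must track the conditional expectations carefully so that the bias and variance bounds of Lemmas \ref{lemma.first-order moment} and \ref{lemma.second-order moment} apply to each $e_i^{k,s}$ given the past, while closing the Gronwall recursion without spoiling the $1/n$ (linear-speedup) scaling of the final bound. The cross-term bookkeeping in the third step is routine but error-prone, since every cross term must be arranged so that a first-order bias factor $\chi_f\mu$ pairs with a bounded magnitude $\chi_G+\chi$ rather than producing an uncontrolled $\mu^0$ contribution.
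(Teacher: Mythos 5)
Your proposal is correct and follows essentially the same route as the paper's proof: the same three-way decomposition of $\blambda^{k+1}-\overline{\blambda}^{k+1}$ (inter-round gradient change controlled by $\|\proj{\bx^{k+1}}-\proj{\bx^{k}}\|$ via \eqref{eq.grad Lips}, local drift, and estimator bias/variance), the same identity $\Lambda_i^k-\overline{\Lambda}^k=\eta\tau\left(\grad f_i(\proj{x^k})-\grad f(\proj{x^k})\right)+\eta\tau c_i^k$ from Lemma \ref{lemma.equivalent update}, the same Gronwall-type unrolling of the drift recursion closed by $\eta L\tau\le 1/16$, and the same tower-property treatment of cross terms that pairs the bias $\chi_f\mu$ from Lemma \ref{lemma.first-order moment} with the uniform bound $\chi_G+\chi$. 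The only noteworthy (cosmetic) deviation is in the drift step: the paper compares $z_i^{k,t+1}$ with $\tilde{x}^{k+1}$ and uses the tube non-expansiveness \eqref{eq.lips=2}, reserving \eqref{eq.project Lips} for the gradient-step term only, which is what keeps the constant $M$ out of the recursion coefficient and the $\blambda$-consensus term ($9\tau$, not $M$-dependent); your uniform use of \eqref{eq.project Lips} gives the same structural bound with slightly $M$-inflated constants.
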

    \begin{proof}
        Firstly, we bound the drift error $\|z_{i}^{k,t+1}-\proj{x^k}\|$. When $\tau = 1$, the error is $0$ due to $z_{i}^{k,t}=\proj{x^k}$. When $\tau\ge 2$, from the update of the Algorithm \ref{alg.federated}, we have 
        \begin{equation}\label{eq.z-p}
            \E\|z_{i}^{k,t+1}-\proj{x^k}\|^2=\E\left\|\proj{\proj{x^k}-\eta \sum_{l=0}^t(G_{i}^{k,l} +c_i^k)}-\proj{x^k}\right\|^2.
        \end{equation}
        Take $\tilde{\eta}=(t+1)\eta$ in equation (\ref{eq.PRGD}), i.e., $\tilde{x}^{k+1}=\proj{\proj{x^k}-(t+1)\eta\grad f(\proj{x^k})}$. We bound the equation (\ref{eq.z-p}) by compare with the above equation:
        \begin{equation}\label{eq.(z-p)1}
            \begin{split}
                &\E\|z_{i}^{k,t+1}-\proj{x^k}\|^2\\
                = \ \ &\left\|\proj{\proj{x^k}-\eta \sum_{l=0}^t(G_{i}^{k,l} +c_i^k)}-\tilde{x}^{k+1}+\tilde{x}^{k+1}-\proj{x^k}\right\|^2\\
                \le \ \ & \underbrace{2\E\left\|\proj{\proj{x^k}-\eta \sum_{l=0}^t(G_{i}^{k,l} +c_i^k)}-\tilde{x}^{k+1}\right\|^2}\limits_{(a_1)}+\underbrace{2\E\|\tilde{x}^{k+1}-\proj{x^k}\|^2}\limits_{(a_2)}
            \end{split}
        \end{equation}
        For the term $(a_2)$, due to equation \eqref{eq.project Lips}, we can get
        \begin{equation}\label{eq.(a2)}
            (a_2)\le 2M^2\tau^2\eta^2\|\grad f(\proj{x^k})\|^2
        \end{equation}
        For the term $(a_1)$, due to the equivalent form of $c_i^k$ in Lemma \ref{lemma.equivalent update} and Lemma \ref{lemma.uniform boundness}, we have 
        $$
            \left\|\sum_{l=0}^t(G_{i}^{k,l} +c_i^k))\right\|\le 3\tau \chi_G.
        $$
        Since the step size satisfies $\tilde{\eta}\le \min\left\{\frac{\eta_g\gamma}{\chi},\frac{\eta_g\gamma}{3\chi_G}\right\}$, then applying Assumption \ref{assum.Lipschitz} and Lemma \ref{lemma.uniform boundness} implies
        $$
            \proj{x^k}-\eta \sum_{l=0}^t(G_{i}^{k,l} +c_i^k),\proj{x^k}-(t+1)\eta\grad f(\proj{x^k})\in \overline{U}_{\cM}(\gamma).
        $$
        Now we may use the property of proximal smoothness to get
        \begin{equation}\label{eq.(a1)middle}
            \begin{split}
                (a_1) \ \le \ \ &4\E\left\|\eta\sum_{l=0}^t\left(G_{i}^{k,l} +c_i^k-\grad f(\proj{x^k})\right)\right\|^2\\
                = \ \ &4\E\Bigg\|\eta\sum_{l=0}^t\Big(G_{i}^{k,l} - \grad f_i(\proj{x^k}) + \grad f_i(\proj{x^k})\\
                \ \ &+ \frac{1}{\tau}\sum_{t=0}^{\tau -1}\frac{1}{n}\sum_{i=1}^nG_{i}^{k-1,t}-\frac{1}{\tau}\sum_{t=0}^{\tau -1} G_{i}^{k-1,t}-\grad f(\proj{x^k})\Big)\Bigg\|^2\\
                =\ \ &4\E\left\|\eta\sum_{l=0}^t\left(G_{i}^{k,l}-\grad f_i(\proj{x^k})+\frac{1}{\eta\tau}(\Lambda_i^k-\overline{\Lambda}^k)\right)\right\|^2\\
                \le \ \ & \underbrace{8\E\left\|\eta\sum_{l=0}^t\left(G_{i}^{k,l}-\grad f_i(\proj{x^k})\right)\right\|^2}_{(b)}+8\left(\frac{t+1}{\tau}\right)^2\E\left\|\Lambda_i^k-\overline{\Lambda}^k\right\|^2
            \end{split}
        \end{equation}
        where the first equation is from the equivalent form of $c_i^k$ in Lemma \ref{lemma.equivalent update}, and the second equation is due to equations \eqref{eq.Lambda^k} and \eqref{eq.ave of Lambda^k}.
        From triangle inequality and Lemma \ref{lemma.lispchitz continuity over Riemannian gradient}, we have
        \begin{equation}\label{eq.(b) form}
            \begin{split}
                (b) \ = \ \ &8\E\left\|\eta\sum_{l=0}^t\left(G_{i}^{k,l}-\grad f_i(z_{i}^{k,l})+\grad f_i(z_{i}^{k,l})-\grad f_i(\proj{x^k})\right)\right\|^2\\
                \le \ \ &16\eta^2\E\left\|\sum_{l=0}^t\left(G_{i}^{k,l}-\grad f_i(z_{i}^{k,l})\right)\right\|^2 + 16\eta^2\E\left\|\sum_{l=0}^t\left(\grad f_i(z_{i}^{k,l})-\grad f_i(\proj{x^k})\right)\right\|^2\\
                \le \ \ &\underbrace{16\eta^2\E\left\|\sum_{l=0}^t\left(G_{i}^{k,l}-\grad f_i(z_{i}^{k,l})\right)\right\|^2}_{(c)} + 16\eta^2L^2(t+1)\sum_{l=0}^t\|z_{i}^{k,l}-\proj{x^k}\|^2.
            \end{split}
        \end{equation}
        Now we investigate the term (c). Notice that
        \begin{align}\label{eq.(c) form}
            (c) = 16\eta^2\sum_{l=0}^t\E\left\|G_{i}^{k,l}-\grad f_i(z_{i}^{k,l})\right\|^2 + 32\eta^2\sum_{0\le l < \omega \le t} \E\left[\inner{G_{i}^{k,l} - \grad f_i(z_{i}^{k,l})}{G_{i}^{k,\omega} - \grad f_i(z_{i}^{k,\omega})}\right].
        \end{align}
        For the inner product, let $U_{i}^{k,\omega} = \{u_{i,1}^{k,\omega},\ldots,u_{i,m}^{k,\omega}\}$ and $\Xi_{i}^{k,\omega} = \{\xi_{i,1}^{k,\omega},\ldots,\xi_{i,m}^{k,\omega}\}$, by the tower property of expectation, we get 
        \begin{align*}
           \E\left[\inner{G_{i}^{k,l} - \grad f_i(z_{i}^{k,l})}{G_{i}^{k,\omega} - \grad f_i(z_{i}^{k,\omega})}\right]= \ \ &\E\left[\E_{U_{i}^{k,\omega},\Xi_{i}^{k,\omega}}\left[\inner{G_{i}^{k,l} - \grad f_i(z_{i}^{k,l})}{G_{i}^{k,\omega} - \grad f_i(z_{i}^{k,\omega})}\mid z_{i}^{k,\omega}\right] \right] \\
           \le  \ \ &\left(\chi_G + \chi\right) \E\left[ \left\|\E_{U_{i}^{k,\omega},\Xi_{i}^{k,\omega}}\left[G_{i}^{k,\omega} - \grad f_i(z_{i}^{k,\omega})\mid z_{i}^{k,\omega}\right]\right\|\right]  \\ 
           \le \ \  &\left(\chi_G + \chi\right) \E\left[\chi_f\mu\right]  \\ 
           =  \ \ &\left(\chi_G + \chi\right)\chi_f\mu,
        \end{align*}
        where the first inequality is due to Assumption \ref{assum.Lipschitz}, Lemma \ref{lemma.uniform boundness} and the second inequality is from Lemma \ref{lemma.first-order moment}. By substituting the above inequality into \eqref{eq.(c) form} and using Lemma \ref{lemma.second-order moment}, it follows
        \begin{equation}\label{eq.(c)}
            \begin{split}
                (c) \ \le  \ \ 
            &16\eta^2\sum_{l=0}^t\E\left\|G_{i}^{k,l}-\grad f_i(z_{i}^{k,l})\right\|^2 + 16\eta^2t(t+1)\left(\chi_G + \chi\right)\chi_f\mu\\
            \le \ \ &  16\eta^2(t+1)(\chi_1\mu^2+\chi_2/m) + 16\eta^2t(t+1)\left(\chi_G + \chi\right)\chi_f\mu.
            \end{split}
        \end{equation}
        Combining all the inequalities \eqref{eq.(c)}, \eqref{eq.(b) form}, \eqref{eq.(a1)middle}, \eqref{eq.(a2)} and \eqref{eq.(z-p)1} implies
        % Substituting the right side of inequality \eqref{eq.(c)} into inequality \eqref{eq.(b) form}, it holds
        % \begin{equation}\label{eq.(b)}
        %     (b)\le 16\eta^2(t+1)(\chi_1\mu^2+\chi_2/m) + 16\eta^2t(t+1)\left(\chi_G + \chi\right)\chi_f\mu + 16\eta^2L^2(t+1)\sum_{l=0}^t\|z_{i}^{k,l}-\proj{x^k}\|^2.
        % \end{equation}
        % Combining (\ref{eq.(a1)middle}) and (\ref{eq.(b)}), we can get
        % \begin{equation}\label{eq.(a1)}
        %     \begin{split}
        %         (a_1)\le &16\eta^2L^2(t+1)\sum_{l=0}^t\|z_{i}^{k,l}-\proj{x^k}\|^2 +8\left(\frac{t+1}{\tau}\right)^2\E\left\|\Lambda_i^k-\overline{\Lambda}^k\right\|^2\\
        %     &+ 16\eta^2(t+1)(\chi_1\mu^2+\chi_2/m)+16\eta^2t(t+1)\left(\chi_G + \chi\right)\chi_f\mu.
        %     \end{split}
        % \end{equation}
        % Bringing (\ref{eq.(a1)}) and (\ref{eq.(a2)}) to (\ref{eq.(z-p)1}), it holds
        \begin{equation}\label{eq.(z-p)2}
            \begin{split}
                &\E\|z_{i}^{k,t+1}-\proj{x^k}\|^2\\
                \le \ \ &16\eta^2L^2(t+1)\sum_{l=0}^t\|z_{i}^{k,l}-\proj{x^k}\|^2\\
                & +2M^2\tau^2\eta^2\|\grad f(\proj{x^k})\|^2+8\E\left\|\Lambda_i^k-\overline{\Lambda}^k\right\|^2\\
                & + 16\eta^2\tau(\chi_1\mu^2+\chi_2/m)+16\eta^2\tau(\tau-1)\left(\chi_G + \chi\right)\chi_f\mu.
            \end{split}
        \end{equation}
        Denote the sum of the last four terms as $h^k$:
        $$
            h^k\triangleq 2M^2\tau^2\eta^2\|\grad f(\proj{x^k})\|^2+8\E\left\|\Lambda_i^k-\overline{\Lambda}^k\right\|^2+ 16\eta^2\tau(\chi_1\mu^2+\chi_2/m)+16\eta^2\tau(\tau-1)\left(\chi_G + \chi\right)\chi_f\mu.
        $$
        Additionally, define
        $$
            S_{i,t}\triangleq\sum_{l=0}^t\E\|z_{i}^{k,t}-\proj{x^k}\|^2.
        $$
        From equation (\ref{eq.(z-p)2}) and the condition $\tilde{\eta}\le \eta_g/(16L)$, it follows that
        \begin{equation*}
            S_{i,t+1}^k\le \left(1+\frac{1}{(16\tau)}\right)S_{i,t}^k+h^k.
        \end{equation*}
        By further expanding this recurrence, we obtain
        \begin{equation}\label{eq.S-rec}
            S_{i,\tau-1}^k\le h^k\sum_{l=0}^{\tau-2}\left(1+1/(16\tau)\right)^l\le 1.1\tau h^k,
        \end{equation}
        where the last inequality is from $\sum_{l=0}^{\tau-2}(1+1/(16\tau))^l\le \sum_{l=0}^{\tau-2}{\rm exp}(l/(16\tau))\le \sum_{l=0}^{\tau-2}{\rm exp}(1/(16))\le 1.1\tau$. 
        
        Summing (\ref{eq.S-rec}) over all clients $i$ gives
        \begin{equation}\label{eq.(z-p)sum}
            \begin{aligned}
              \E\left[\sum_{i = 1}^n\sum_{t=0}^{\tau-1}\|z_{i}^{k,t}-\proj{x^k}\|^2\right]\ \le \ \ &3nM^2\tau^3\eta^2\|\grad f(\proj{x^k})\|^2+9\tau\E\left\|\blambda_i^k-\overline{\blambda}^k\right\|^2\\
            &\ \ + 18n\eta^2\tau^2(\chi_1\mu^2+\chi_2/m)+18n\eta^2\tau^2(\tau-1)\left(\chi_G + \chi\right)\chi_f\mu.  
            \end{aligned}
        \end{equation}
       Recall the definitions of $\blambda_i^k$ and $\overline{\blambda}^k$, and it follows
        \begin{equation}
            \begin{split}
                &\E\|\blambda_i^{k+1}-\overline{\blambda}^{k+1}\|^2\\
                = \ \ &\eta^2\E\left\|\tau\grad\f(\proj{\bx^{k+1}})-\sum_{t=0}^{\tau-1}\bG^{k,t}-\tau\overline{\grad\f}(\proj{x^{k+1}})+\sum_{t=0}^{\tau-1}\overline{\bG}^{k,t}\right\|^2\\
                \le \ \ &\eta^2\E\left\|\tau\grad\f(\proj{\bx^{k+1}})-\sum_{t=0}^{\tau-1}\bG^{k,t}\right\|^2\\
                =\ \ &\eta^2\E\Bigg\|\tau\grad\f(\proj{\bx^{k+1}})-\tau\grad\f(\proj{\bx^{k}})+\tau\grad\f(\proj{\bx^{k}})-\sum_{t=0}^{\tau-1}\grad\f(\bz_t^k)\\
                &\ \ +\sum_{t=0}^{\tau-1}\grad\f(\bz_t^k)-\sum_{t=0}^{\tau-1}\bG^{k,t}\Bigg\|\\
                \le &\ \ 2n\eta^2L^2\tau^2\E\|\proj{\bx^{k+1}}-\proj{\bx^{k}}\|^2+4\eta^2L^2\tau\sum_{i = 1}^n\sum_{t=0}^{\tau-1}\E\left[\|z_{i}^{k,t}-\proj{x^k}\|^2\right]\\
                & \ \ +4n\eta^2\tau(\chi_1\mu^2+\chi_2/m)+4n\eta^2\tau(\tau-1)\left(\chi_G + \chi\right)\chi_f\mu.
            \end{split}
        \end{equation}
        The first inequality holds since for any  $x_1, \dots, x_n \in \mathbb{R}^{d\times r}$, we have
$$
\sum_{i=1}^n\|x_i-\bar{x}\|^2\le \sum_{i=1}^n\|x_i\|^2, \quad \text{where} \quad \bar{x}=\frac{1}{n}\sum_{i=1}^n x_i.
$$
The final inequality follows from property \eqref{eq.grad Lips}, Lemma \ref{lemma.first-order moment}, Lemma \ref{lemma.second-order moment}, and a similar argument used in proving inequality \eqref{eq.(c)}. Finally, Substituting (\ref{eq.(z-p)sum}) into the above inequality completes the proof.
    \end{proof}

     \begin{lemma}
         Suppose Assumptions \ref{assum.proximal-smooth}, \ref{assum.unbiased}, and \ref{assum.Lipschitz} hold. If the step size satisfies $\tilde{\eta}\triangleq \eta_g\eta\tau\le \min\left\{\frac{\eta_g\gamma}{\chi},\frac{\eta_g\gamma}{3\chi_G},\frac{\eta_g}{16L}\right\}$, then we have
         \begin{equation}\label{eq.first term bound}
            \begin{split}
                \E[f(\proj{x^{k+1}}] \ \le \ \ &\E\Bigg[f(\proj{x^{k}})+\left(L-\frac{1}{2\tilde{\eta}}+\frac{\rho}{2}\right)\|\tilde{x}^{k+1}-\proj{x^k}\|^2\\
                &+\left(\frac{L}{2}-\frac{1}{2\tilde{\eta}}\right)\|\proj{x^{k+1}}-\proj{x^k}\|^2\\
                &+\frac{\tilde{\eta}}{2(1-\tilde{\eta}\rho)}\frac{2L^2}{n\tau}\Big(3nM^2\tau^3\eta^2\|\grad f(\proj{x^k})\|^2+9\tau\E\left\|\blambda_i^k-\overline{\blambda}^k\right\|^2\\
            &+ 18n\eta^2\tau^2(\chi_1\mu^2+\chi_2/m)+18n\eta^2\tau^2(\tau-1)\left(\chi_G + \chi\right)\chi_f\mu\Big)\\
                &+\frac{\tilde{\eta}}{2(1-\tilde{\eta}\rho)}\left(\frac{2}{n\tau}(\chi_1\mu^2+\chi_2/m) + 2\left(\chi_G + \chi\right)\chi_f\mu\right)\Bigg].
            \end{split}
         \end{equation}
     \end{lemma}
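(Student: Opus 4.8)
The idea is to read the server update as a single projected step and then apply the descent estimate of Lemma~\ref{lemma.special} twice. By Lemma~\ref{lemma.equivalent update}, the server step can be written as $\proj{x^{k+1}} = \proj{\,\proj{x^k}-\tilde{\eta}\,v^k}$, where $v^k \triangleq \frac{1}{\tau}\sum_{t=0}^{\tau-1}\frac{1}{n}\sum_{i=1}^n G_{i}^{k,t}$ and $\tilde{\eta}=\eta_g\eta\tau$. Similarly, $\tilde{x}^{k+1}=\proj{\,\proj{x^k}-\tilde{\eta}\grad f(\proj{x^k})}$ is a projected step from the same base point $\proj{x^k}\in\cM$ with direction $\grad f(\proj{x^k})$. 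The uniform bound $\|v^k\|\le\chi_G$ (Lemma~\ref{lemma.uniform boundness}) and $\|\grad f(\proj{x^k})\|\le\chi$ (Assumption~\ref{assum.Lipschitz}), combined with the step-size restrictions $\tilde{\eta}\le\eta_g\gamma/\chi$ and $\tilde{\eta}\le\eta_g\gamma/(3\chi_G)$, ensure $\|\tilde{\eta}v^k\|\le\gamma$ and $\|\tilde{\eta}\grad f(\proj{x^k})\|\le\gamma$, so both perturbed points lie in $\overline{U}_{\cM}(\gamma)$ and Lemma~\ref{lemma.special} is applicable in each case.

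The first step is to invoke Lemma~\ref{lemma.special} with $x=\proj{x^k}$, $v=v^k$, $x^+=\proj{x^{k+1}}$ and comparison point $z=\tilde{x}^{k+1}$. This yields $f(\proj{x^{k+1}})$ bounded by $f(\tilde{x}^{k+1})$, the cross term $\inner{\grad f(\proj{x^k})-v^k}{\proj{x^{k+1}}-\tilde{x}^{k+1}}$, the quadratic $(\tfrac{L}{2}-\tfrac{1}{2\tilde{\eta}})\|\proj{x^{k+1}}-\proj{x^k}\|^2$, the term $(\tfrac{1}{2\tilde{\eta}}+\tfrac{L}{2})\|\tilde{x}^{k+1}-\proj{x^k}\|^2$, and the negative quadratic $-(\tfrac{1}{2\tilde{\eta}}-\tfrac{3\|v^k\|}{4\gamma})\|\proj{x^{k+1}}-\tilde{x}^{k+1}\|^2$. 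Rather than discarding the last term, I would spend it against the cross term by completing the square, using $\inner{b}{a}-\tfrac{\alpha}{2}\|a\|^2\le\tfrac{1}{2\alpha}\|b\|^2$ with $a=\proj{x^{k+1}}-\tilde{x}^{k+1}$, $b=\grad f(\proj{x^k})-v^k$ and $\alpha=\tfrac{1}{\tilde{\eta}}-\tfrac{3\|v^k\|}{2\gamma}$. Bounding $\|v^k\|\le\chi_G$ gives $\alpha\ge\tfrac{1}{\tilde{\eta}}-\rho$ for any $\rho\ge\tfrac{3\max\{\chi_G,\chi\}}{2\gamma}$, producing exactly the coefficient $\tfrac{\tilde{\eta}}{2(1-\tilde{\eta}\rho)}$ on $\|\grad f(\proj{x^k})-v^k\|^2$ and forcing the requirement $\tilde{\eta}\rho<1$.

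Next I would bound $f(\tilde{x}^{k+1})$ by a second application of Lemma~\ref{lemma.special}, now with $v=\grad f(\proj{x^k})$ (so $x^+=\tilde{x}^{k+1}$) and comparison point $z=\proj{x^k}$; the cross term vanishes and one obtains $f(\tilde{x}^{k+1})\le f(\proj{x^k})+\big(\tfrac{L}{2}-\tfrac{1}{\tilde{\eta}}+\tfrac{3\|\grad f(\proj{x^k})\|}{4\gamma}\big)\|\tilde{x}^{k+1}-\proj{x^k}\|^2$. Adding this into the first estimate, the coefficient of $\|\tilde{x}^{k+1}-\proj{x^k}\|^2$ becomes $\big(\tfrac{1}{2\tilde{\eta}}+\tfrac{L}{2}\big)+\big(\tfrac{L}{2}-\tfrac{1}{\tilde{\eta}}+\tfrac{3\|\grad f(\proj{x^k})\|}{4\gamma}\big)=L-\tfrac{1}{2\tilde{\eta}}+\tfrac{3\|\grad f(\proj{x^k})\|}{4\gamma}\le L-\tfrac{1}{2\tilde{\eta}}+\tfrac{\rho}{2}$, using $\|\grad f(\proj{x^k})\|\le\chi$ and $\rho\ge\tfrac{3\chi}{2\gamma}$, while the coefficient of $\|\proj{x^{k+1}}-\proj{x^k}\|^2$ remains $\tfrac{L}{2}-\tfrac{1}{2\tilde{\eta}}$; these match the two deterministic quadratic coefficients in the statement.

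It then remains to take expectations and bound $\E\|\grad f(\proj{x^k})-v^k\|^2$. Writing $\grad f(\proj{x^k})-v^k=\frac{1}{n\tau}\sum_{i,t}\big[(\grad f_i(\proj{x^k})-\grad f_i(z_{i}^{k,t}))+(\grad f_i(z_{i}^{k,t})-G_{i}^{k,t})\big]$, I would split the squared norm into a drift part, controlled by the $L$-Lipschitz bound \eqref{eq.grad Lips} and then by the summed drift estimate \eqref{eq.(z-p)sum} of the preceding lemma (giving the $\tfrac{2L^2}{n\tau}(\cdots)$ block), and an estimator-error part, handled exactly as in the derivation of \eqref{eq.(c)}: the conditional second moment contributes $\chi_1\mu^2+\chi_2/m$ (Lemma~\ref{lemma.second-order moment}) and the inter-step cross terms, via the tower property together with the first-moment bound $\|\E[G_{i}^{k,t}\mid z_{i}^{k,t}]-\grad f_i(z_{i}^{k,t})\|\le\chi_f\mu$ (Lemma~\ref{lemma.first-order moment}) and $\|G_{i}^{k,t}\|\le\chi_G$, contribute $(\chi_G+\chi)\chi_f\mu$. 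Multiplying by $\tfrac{\tilde{\eta}}{2(1-\tilde{\eta}\rho)}$ reproduces the two error blocks. The main obstacle is the cross-term treatment in the second paragraph — recognizing that the $\tfrac{1}{2\tilde{\eta}}-\tfrac{3\|v^k\|}{4\gamma}$ quadratic supplied by Lemma~\ref{lemma.special} must be consumed (not dropped) to generate the $1/(1-\tilde{\eta}\rho)$ denominator — together with the conditional-expectation bookkeeping needed to cleanly separate the order-$\mu$ bias from the order-$(\mu^2+1/m)$ variance in $\E\|\grad f(\proj{x^k})-v^k\|^2$; the remaining manipulations are routine collection of quadratic coefficients under the stated step-size thresholds.
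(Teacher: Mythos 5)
Your proposal is correct and follows essentially the same route as the paper: two applications of Lemma \ref{lemma.special} (one comparing $\proj{x^{k+1}}$ against $\tilde{x}^{k+1}$ with $v=v^k$, one bounding $f(\tilde{x}^{k+1})$ against $f(\proj{x^k})$ with $v=\grad f(\proj{x^k})$), consumption of the negative quadratic $\|\proj{x^{k+1}}-\tilde{x}^{k+1}\|^2$ by the cross term to produce the $\tfrac{\tilde{\eta}}{2(1-\tilde{\eta}\rho)}$ factor, and the same drift-plus-estimator-error decomposition of $\E\|\grad f(\proj{x^k})-v^k\|^2$ via \eqref{eq.grad Lips}, \eqref{eq.(z-p)sum}, Lemmas \ref{lemma.first-order moment}--\ref{lemma.second-order moment}, and the \eqref{eq.(c)}-type cross-term argument. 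Your ``completing the square'' phrasing is computationally identical to the paper's weighted Cauchy--Schwarz step (Lemma \ref{lemma.cauchy schwarz} with $\varepsilon=(1-\tilde{\eta}\rho)/\tilde{\eta}$), and the reversed order of the two Lemma \ref{lemma.special} applications is immaterial.
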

     \begin{proof}
         Let $x^+ = \tilde{x}^{k+1} = \proj{\proj{x^k}-(t+1)\eta\grad f(\proj{x^k})}$, $z = \proj{x^k}$, $x = \proj{x^k}$, $v = \grad f(\proj{x^k})$, and $\rho = \frac{3D}{2\gamma}$. By applying Lemma \ref{lemma.special} to (\ref{eq.PRGD}), it follows
         \begin{equation}
             \E[f(\tilde{x}^{k+1})]\le \E\left[ f(\proj{x^k})+\left(\frac{L}{2}-\frac{1}{2\eta}\right)\|\tilde{x}^{k+1}-\proj{(x^k)}\|^2-\frac{1-\tilde{\eta}\rho}{2\tilde{\eta}}\|\tilde{x}^{k+1}-\proj{x^k}\|^2\right],
         \end{equation}
         where we use the condition $\tilde{\eta} \leq \frac{\eta_g\gamma}{\chi}$ to ensure that $\proj{x^k}-(t+1)\eta\grad f(\proj{x^k}) \in \overline{U}_{\cM}(\gamma)$. From Lemma \ref{lemma.equivalent update} and the condition $\tilde{\eta} \leq \frac{\eta_g\gamma}{3\chi_G}$, we obtain
         \begin{equation}\label{eq.use1}
             \proj{x^{k+1}}=\proj{\proj{x^k}-\tilde{\eta}\frac{1}{n\tau}\sum_{i=1}^n\sum_{t = 0}^{\tau-1}G_{i}^{k,t}}\in \overline{U}_{\cM}(\gamma).
         \end{equation}
         Similarly, let $x^+=\proj{x^{k+1}}$, $x=  \proj{x^k}$, $z=\tilde{x}^{k+1}$, $v = v^k = \tilde{\eta}\frac{1}{n\tau}\sum_{i=1}^n\sum_{t = 0}^{\tau-1}G_{i}^{k,t}$. Hence using Lemma \ref{lemma.special} implies
         \begin{equation}\label{eq.use2}
            \begin{split}
                &\E[f(\proj{x^{k+1}})\\
                \le \ \ &\E\Big[f(\tilde{x}^{k+1})+\inner{\grad f(\proj{x^k})-v^k}{\proj{x^{k+1}}-\tilde{x}^{k+1}}\\
                &-\frac{1}{2\tilde{\eta}}(\|\proj{x^{k+1}}-\proj{x^k}\|^2-\|\tilde{x}^{k+1}-\proj{x^k}\|^2)-\frac{1-\tilde{\eta}\rho}{\tilde{\eta}\rho}\|\tilde{x}^{k+1}-\proj{x^{k+1}}\|^2\\
                &+\frac{L}{2}\|\proj{x^{k+1}}-\proj{x^k}\|^2+\frac{L}{2}\|\tilde{x}^{k+1}-\proj{x^k}\|^2\Big]\\
                =\ \ &\E\Big[f(\tilde{x}^{k+1})+\inner{\grad f(\proj{x^k})-v^k}{\proj{x^{k+1}}-\tilde{x}^{k+1}} + \left(\frac{L}{2}-\frac{1}{2\tilde{\eta}}\right)\|\proj{x^{k+1}}-\proj{x^k}\|^2\\
                &+\left(\frac{L}{2}+\frac{1}{2\tilde{\eta}}\right)\|\tilde{x}^{k+1}-\proj{x^k}\|^2-\frac{1-\tilde{\eta}\rho}{2\tilde{\eta}}\|\tilde{x}^{k+1}-\proj{x^{k+1}}\|^2\Big].
            \end{split}
         \end{equation}
         Summing equations \eqref{eq.use1} and \eqref{eq.use2} yields
         \begin{equation}\label{eq.f(P)1}
            \begin{split}
                \E[f(\proj{x^{k+1}}]\le \ \ &\E\Big[f(\proj{x^{k}})+\left(L-\frac{1}{2\tilde{\eta}}+\frac{\rho}{2}\right)\|\tilde{x}^{k+1}-\proj{x^k}\|^2\\
                &+\left(\frac{L}{2}-\frac{1}{2\tilde{\eta}}\right)\|\proj{x^{k+1}}-\proj{x^k}\|^2-\frac{1-\tilde{\eta}\rho}{2\tilde{\eta}}\|\tilde{x}^{k+1}-\proj{x^{k+1}}\|^2 \\
                &+\inner{\grad f(\proj{x^k})-v^k}{\proj{x^{k+1}}-\tilde{x}^{k+1}}\Big] \\
                \le \ \ &\E\Big[f(\proj{x^{k}})+\left(L-\frac{1}{2\tilde{\eta}}+\frac{\rho}{2}\right)\|\tilde{x}^{k+1}-\proj{x^k}\|^2\\
                &+\left(\frac{L}{2}-\frac{1}{2\tilde{\eta}}\right)\|\proj{x^{k+1}}-\proj{x^k}\|^2-\frac{1-\tilde{\eta}\rho}{2\tilde{\eta}}\|\tilde{x}^{k+1}-\proj{x^{k+1}}\|^2\\
                &+\frac{1-\tilde{\eta}\rho}{2\tilde{\eta}}\|\proj{x^{k+1}}-\tilde{x}^{k+1}\|^2+\frac{\tilde{\eta}}{2(1-\tilde{\eta}\rho)}\|\grad f(\proj{x^k})-v^k\|^2\Big] \\
                = \ \ &\E\Big[f(\proj{x^{k}})+\left(L-\frac{1}{2\tilde{\eta}}+\frac{\rho}{2}\right)\|\tilde{x}^{k+1}-\proj{x^k}\|^2\\
                &+\left(\frac{L}{2}-\frac{1}{2\tilde{\eta}}\right)\|\proj{x^{k+1}}-\proj{x^k}\|^2 + \frac{\tilde{\eta}}{2(1-\tilde{\eta}\rho)}\|\grad f(\proj{x^k})-v^k\|^2\Big],
            \end{split}
         \end{equation}
         where we use Lemma \ref{lemma.cauchy schwarz} in the second inequality. To proceed, we bound the last term as follows:
         \begin{equation}\label{eq.gradf-v}
            \begin{split}
                 &\E\|\grad f(\proj{x^k})-v^k\|^2\\
                 = \ \ &\E\left\|\frac{1}{n\tau}\sum_{i=1}^n\sum_{t=0}^{\tau-1}\left(G_{i}^{k,t}-\grad f_i(z_{i}^{k,t})+\grad f_i(z_{i}^{k,t})-\grad f_i(\proj{x^k})\right)\right\|^2\\
                 \le \ \  &\frac{2L^2}{n\tau}\sum_{i=1}^n\sum_{t=0}^{\tau-1}\|z_{i}^{k,t}-\proj{x^k}\|^2 + \frac{2}{n\tau}(\chi_1\mu^2+\chi_2/m) + 2\left(\chi_G + \chi\right)\chi_f\mu,
            \end{split}
         \end{equation}
        where the last inequality follows from property \eqref{eq.grad Lips}, Lemma \ref{lemma.first-order moment}, Lemma \ref{lemma.second-order moment}, and a similar argument used in proving inequality \eqref{eq.(c)}. By combining equations \eqref{eq.f(P)1} and \eqref{eq.gradf-v}, we get 
         \begin{equation}\label{eq.f(P)2}
            \begin{split}
                \E[f(\proj{x^{k+1}}] \ \le \ \ &\E\Bigg[f(\proj{x^{k}})+\left(L-\frac{1}{2\tilde{\eta}}+\frac{\rho}{2}\right)\|\tilde{x}^{k+1}-\proj{x^k}\|^2\\
                &+\left(\frac{L}{2}-\frac{1}{2\tilde{\eta}}\right)\|\proj{x^{k+1}}-\proj{x^k}\|^2\\
                &+\frac{\tilde{\eta}}{2(1-\tilde{\eta}\rho)}\left(\frac{2L^2}{n\tau}\sum_{i=1}^n\sum_{t=0}^{\tau-1}\|z_{i}^{k,t}-\proj{x^k}\|^2 + \frac{2}{n\tau}(\chi_1\mu^2+\chi_2/m) + 2\left(\chi_G + \chi\right)\chi_f\mu\right)\Bigg].
            \end{split}
         \end{equation}
         Substituting equation \eqref{eq.(z-p)sum} into the above inequality completes the proof.
     \end{proof}
     \subsection{Proof of Theorem \ref{theorem.convergence}:}
    \begin{theorem}[Restatement]
    Suppose Assumptions \ref{assum.proximal-smooth}, \ref{assum.unbiased} and \ref{assum.Lipschitz} hold. If the step sizes satisfy $\eta_g = \sqrt{n}$ and
    \begin{align*}
       \tilde{\eta}\triangleq \eta_g\eta\tau\le \min\left\{\frac{1}{24ML},\frac{\gamma}{6\max\{\chi_G,\chi\}},\frac{1}{\chi L_{\cP}}\right\}, 
    \end{align*}
    then we have 
    \begin{align*}
        \frac{1}
        {K}\sum_{k=1}^K\|\cG_{\tilde{\eta}}(\proj{x^k})\|^2 \le \frac{8\Omega^1}{\sqrt{n}\eta\tau K}+\frac{64}{n\tau}\left(\chi_1\mu^2+\frac{\chi_2}{m}\right) +\frac{16(3+n)\left(\chi_G + \chi\right)\chi_f}{n}\mu,    
    \end{align*}
     where $\Omega^1>0$ is a constant related to initialization. Specifically, set the smoothing parameter $\mu = \mathcal{O}(1/n\tau K)$, and it follows
     \begin{align*}
        \frac{1}
        {K}\sum_{k=1}^K\|\cG_{\tilde{\eta}}(\proj{x^k})\|^2 = \mathcal{O}\left(\frac{1}{\sqrt{n} \tau K} + \frac{1}{n\tau m}\right).
     \end{align*}
\end{theorem}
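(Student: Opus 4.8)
The plan is to run a one-step descent argument on the potential function $\Omega^k$ defined in \eqref{eq.omega^k} and then telescope over the communication rounds. The two preceding lemmas already do the heavy lifting: \eqref{eq.first term bound} controls the objective decrease $\E[f(\proj{x^{k+1}})]$ while \eqref{eq.second term bound} controls the growth of the consensus/drift surrogate $\tfrac{1}{n}\E\|\blambda^{k+1}-\overline{\blambda}^{k+1}\|^2$. The crucial algebraic observation is that, by the definition of the optimality measure in \eqref{eq.PRGD}, one has $\|\tilde{x}^{k+1}-\proj{x^k}\|^2 = \tilde{\eta}^2\|\cG_{\tilde{\eta}}(\proj{x^k})\|^2$, so the negative term $(L-\tfrac{1}{2\tilde{\eta}}+\tfrac{\rho}{2})\|\tilde{x}^{k+1}-\proj{x^k}\|^2$ appearing in \eqref{eq.first term bound} is exactly the quantity we want to isolate on the left-hand side. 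Before combining, I would record a two-sided equivalence $\|\grad f(\proj{x^k})\|^2 \le c\,\|\cG_{\tilde{\eta}}(\proj{x^k})\|^2$: using a Taylor expansion of $\proj{\cdot}$ around $\proj{x^k}$ controlled by the second-order smoothness constant $L_{\cP}$, the restriction $\tilde{\eta}\le 1/(\chi L_{\cP})$ guarantees that the gradient-mapping step $\proj{x^k}\mapsto\tilde{x}^{k+1}$ reproduces a $-\grad f$ step up to a controllable second-order error, which is precisely why $L_{\cP}$ enters the step-size budget.

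With these ingredients I would add $\tfrac{1}{n\tilde{\eta}}$ times \eqref{eq.second term bound} to \eqref{eq.first term bound} to express $\Omega^{k+1}-\Omega^k$, then verify three coefficient conditions, each enforced by one entry of the step-size budget. First, the coefficient multiplying $\|\cG_{\tilde{\eta}}(\proj{x^k})\|^2$ must be strictly negative: the leading $(L-\tfrac{1}{2\tilde{\eta}}+\tfrac{\rho}{2})\tilde{\eta}^2$ together with the $\|\grad f(\proj{x^k})\|^2$ error terms (which carry an extra $\eta^2\tau^2=\tilde{\eta}^2/n$ factor from the drift bound and are converted via the equivalence above) must sum to at most $-c\tilde{\eta}$ for an absolute $c>0$; the bound $\tilde{\eta}\le 1/(24ML)$ makes the drift contributions lower order so the dominant $-\tfrac{1}{2\tilde{\eta}}$ term wins. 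Second, the $\|\proj{x^{k+1}}-\proj{x^k}\|^2$ terms — negative from \eqref{eq.first term bound} and positive from the $2\eta^2L^2\tau^2$ term of \eqref{eq.second term bound} scaled by $\tfrac{1}{n\tilde{\eta}}$ — must combine to be nonpositive, again using smallness of $\tilde{\eta}$. Third, the consensus energy must contract: the coefficient of $\|\blambda^k-\overline{\blambda}^k\|^2$ produced by \eqref{eq.second term bound} plus its drift contribution from \eqref{eq.first term bound} must be strictly smaller than the $\tfrac{1}{n\tilde{\eta}}$ weight carried from round $k$, so the stored consensus term genuinely decreases.

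Once these checks pass, the combination yields a clean inequality $\Omega^{k+1}\le \Omega^k - c\tilde{\eta}\,\E\|\cG_{\tilde{\eta}}(\proj{x^k})\|^2 + E$, where $E$ gathers the sampling-error terms $\chi_1\mu^2+\chi_2/m$ and $(\chi_G+\chi)\chi_f\mu$ with prefactors independent of $k$. Telescoping from $k=1$ to $K$ and using $\Omega^{K+1}\ge 0$ (since $f\ge f^*$ and the norm term is nonnegative) gives $\tfrac{c\tilde{\eta}}{K}\sum_{k=1}^K\E\|\cG_{\tilde{\eta}}(\proj{x^k})\|^2 \le \Omega^1/K + E$; dividing by $c\tilde{\eta}$ and substituting $\eta_g=\sqrt{n}$ (so that $\tilde{\eta}=\sqrt{n}\,\eta\tau$ and the drift prefactors acquire their $1/n$ and $1/(n\tau)$ scalings) reproduces the three displayed terms with constants $8$, $64/(n\tau)$, and $16(3+n)/n$. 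Finally, choosing $\mu=\mathcal{O}(1/(prn\tau K))$ forces $\chi_1\mu^2=\mathcal{O}(1/(n\tau K)^2)$ and $\chi_f\mu=\mathcal{O}(1/(n\tau K))$ — here I use $\chi_1=\mathcal{O}(p^2r^2)$ from Lemma \ref{lemma.second-order moment} and $\chi_f=\mathcal{O}(pr)$ from Lemma \ref{lemma.first-order moment} — so these become lower order relative to $1/(\sqrt{n}\tau K)$, leaving the advertised rate $\mathcal{O}(1/(\sqrt{n}\tau K)+1/(n\tau m))$.

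The hard part will be the coefficient bookkeeping in the second step, and within it the contraction of the consensus term: because \eqref{eq.second term bound} feeds both $\|\proj{\bx^{k+1}}-\proj{\bx^k}\|^2$ and $\|\grad f(\proj{x^k})\|^2$ back into the drift energy, one must simultaneously control three coupled quantities — $\|\cG_{\tilde{\eta}}\|^2$, $\|\proj{x^{k+1}}-\proj{x^k}\|^2$, and $\|\blambda^k-\overline{\blambda}^k\|^2$ — under a single step-size budget and verify that no circular dependence prevents all three coefficients from being simultaneously favorable. The secondary subtlety is the $\grad f$-to-$\cG_{\tilde{\eta}}$ conversion of the first step: establishing $\|\grad f\|^2\le c\|\cG_{\tilde{\eta}}\|^2$, rather than only the trivial reverse inequality $\|\cG_{\tilde{\eta}}\|\le M\|\grad f\|$, requires the second-order projection estimate governed by $L_{\cP}$, which is the least routine analytic input of the proof.
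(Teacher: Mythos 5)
Your proposal matches the paper's proof essentially step for step: the paper likewise combines \eqref{eq.first term bound} and \eqref{eq.second term bound} into a descent inequality for the potential $\Omega^k$, invokes the conversion $\|\grad f(\proj{x^k})\|\le 2\|\cG_{\tilde{\eta}}(\proj{x^k})\|$ (which the paper simply cites from Lemma A.2 of \citet{zhang2024nonconvex} rather than re-deriving via the $L_{\cP}$-controlled expansion you sketch), absorbs the drift and consensus terms under the stated step-size budget, and telescopes using $\Omega^{K+1}\ge 0$ before choosing $\mu$ to kill the bias terms. The coefficient bookkeeping you flag as the hard part is exactly what the paper dismisses as ``lengthy and nonessential algebraic manipulations,'' so your plan is the same argument, just with that omitted accounting made explicit.
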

     \begin{proof}
         By combining equations \eqref{eq.first term bound} and \eqref{eq.second term bound}, it implies
         \begin{equation}
             \begin{split}
                 &\E\left[(f(\proj{x^{k+1}})-f^*+\frac{1}{\tilde{\eta}n}\|\blambda^{k+1}-\overline{\blambda}^{k+1}\|^2)\right]\\
                 \le \ \ &\E\left[(f(\proj{x^{k}})-f^*)+\frac{1}{\tilde{\eta}n}\|\blambda^{k}-\overline{\blambda}^{k}\|^2-\frac{\tilde{\eta}}{8}\|\cG_{\tilde{\eta}}(\proj{x^k})\|^2\right]\\
                 &+ \frac{8\tilde{\eta}}{n\tau}(\chi_1\mu^2+\chi_2/m)+\frac{(6+2n)\tilde{\eta}}{n}\left(\chi_G + \chi\right)\chi_f\mu,
             \end{split}
         \end{equation}
         where we use the inequality $\|\grad f(\cP_{\cM}(x^k))\|\leq 2\|\cG_{\tilde{\eta}}(\cP_{\cM}(x^k))\|$ \citep[Lemma A.2]{zhang2024nonconvex}, along with the condition $\tilde{\eta}\le \min\left\{\frac{1}{24ML},\frac{\gamma}{6\max\{\chi_G,\chi\}},\frac{1}{\chi L_{\cP}}\right\}$, and the definition of $\cG_{\tilde{\eta}}(\proj{x^k})$. Lengthy and nonessential algebraic manipulations are omitted for brevity. By using the definition of the auxiliary function $\Omega^k$ and rearranging the above inequality, we obtain
         \begin{equation}
             \frac{\tilde{\eta}}{8}\E\|\cG_{\tilde{\eta}}(\proj{x^k})\|^2\le \E[\Omega^{k}] - \E[\Omega^{k+1}] + \frac{8\tilde{\eta}}{n\tau}(\chi_1\mu^2+\chi_2/m)+\frac{(6+2n)\tilde{\eta}}{n}\left(\chi_G + \chi\right)\chi_f\mu.
         \end{equation}
         Summing over $k$ from $1$ to $K$ completes the proof.
         
         % (2) Bringing $m = K$, $\mu = \frac{1}{\tau K}$ to the inequality \ref{theorem.convergence}, we can get 
         % $$
         %    \frac{1}{K}\sum_{k=1}^K\|\cG_{\tilde{\eta}}(\proj{x^k})\|^2\le\frac{\frac{8\Omega^1}{\sqrt{n}\eta}+\frac{64\chi_1}{n\tau^2K} +\frac{64\chi_2}{n}+\frac{16(3+n)(\chi_G+\chi)\chi_f}{n}}{\tau K}.
         % $$
         % This implies the total iteration number is $\cO\left(\left(\frac{8\Omega^1}{\sqrt{n}\eta}+\frac{64\chi_1}{n\tau^2K} +\frac{64\chi_2}{n}+\frac{16(3+n)(\chi_G+\chi)\chi_f}{n}\right)\frac{1}{\epsilon}\right)$.

         % (3) Under the same conditions with (2), if we let $\tau = \cO(\frac{1}{\epsilon})$, then the communication complexity is $\cO\left(\left(\frac{8\Omega^1}{\sqrt{n}\eta}+\frac{64\chi_1}{n\tau^2K} +\frac{64\chi_2}{n}+\frac{16(3+n)(\chi_G+\chi)\chi_f}{n}\right)\frac{1}{\epsilon}\right)/\cO(\frac{1}{\epsilon})=\cO(1)$.
     \end{proof}

\end{document}